\theoremstyle{definition}
\newtheorem{theorem}{Theorem}
\newtheorem*{theorem*}{Theorem}
\numberwithin{theorem}{subsection}
\newtheorem{definition}[theorem]{Definition}
\newtheorem*{definition*}{Definition}
\newtheorem{proposition}[theorem]{Proposition}
\newtheorem{lemma}[theorem]{Lemma}
\newtheorem{remark}[theorem]{Remark}
\newtheorem{example}[theorem]{Example}
\newtheorem{cor}[theorem]{Corollary}
\newenvironment{customthm}[1]
  {\innercustomthm}
  {\endinnercustomthm}
\DeclareMathOperator{\sgn}{sgn}
\DeclareMathOperator{\grad}{grad}
\DeclareMathOperator{\spann}{span}
\DeclareMathOperator{\id}{id}
\DeclareMathOperator{\gr}{gr}
\DeclareMathOperator{\initial}{in}
\DeclareMathOperator{\wt}{wt}
\DeclareMathOperator{\MultiProj}{MultiProj}
\DeclareMathOperator{\Proj}{Proj}
\DeclareMathOperator{\conv}{conv}
\DeclareMathOperator{\grdim}{grdim}
\newcommand{\la}{\lambda}
\newcommand{\om}{\omega}
\newcommand{\cU}{\mathcal U}
\newcommand{\cL}{\mathcal L}
\newcommand{\cJ}{\mathcal J}
\newcommand{\cO}{\mathcal O}
\newcommand{\cP}{\mathcal P}
\newcommand{\cR}{\mathcal R}
\newcommand{\cQ}{\mathcal Q}
\newcommand{\fg}{\mathfrak{g}}
\newcommand{\fsl}{\mathfrak{sl}}
\newcommand{\fgl}{\mathfrak{gl}}
\newcommand{\fn}{\mathfrak{n}}
\newcommand{\fh}{\mathfrak{h}}
\newcommand{\bC}{\mathbb{C}}
\newcommand{\bP}{\mathbb{P}}
\newcommand{\bR}{\mathbb{R}}
\newcommand{\bZ}{\mathbb{Z}}
\newcommand{\one}{\mathbf 1}
\newcommand{\rA}{\mathrm A}
\newcommand{\rB}{\mathrm B}
\newcommand{\rC}{\mathrm C}
\newcommand{\al}{\langle}
\newcommand{\ar}{\rangle}
\newcommand{\bs}{\backslash}
\newcommand\ledot{\mathrel{\ensurestackMath{%
  \stackengine{-.5ex}{\lessdot}{-}{U}{c}{F}{F}{S}}}}
\DeclareFontFamily{U}{mathx}{\hyphenchar\font45}
\DeclareFontShape{U}{mathx}{m}{n}{
      <5> <6> <7> <8> <9> <10>
      <10.95> <12> <14.4> <17.28> <20.74> <24.88>
      mathx10
      }{}
\DeclareSymbolFont{mathx}{U}{mathx}{m}{n}
\DeclareMathSymbol{\bigtimes}{1}{mathx}{"91}
\DeclareMathSymbol{\lsb@l}{\mathalpha}{letters}{`l}
\mathchardef\newbracket=\mathcode`)
\mathchardef\newcomma=\mathcode`,
\setlist[enumerate,1]{leftmargin=8mm}
\setlist[enumerate,2]{leftmargin=8mm}
\setlist[itemize]{leftmargin=8mm}
\title{Poset polytopes and pipe dreams: types C and B}
\author{Ievgen Makedonskyi$^*$}
\address{$^*$Beijing Institute of Mathematical Sciences and Applications (BIMSA)}
\author{Igor Makhlin$^{**}$}
\address{$^{**}$Technische Universit\"at Berlin, \href{mailto://iymakhlin@gmail.com}{iymakhlin@gmail.com}}
\begin{document}

\maketitle

\begin{abstract}
The first part of this paper concerns type C. We present new explicitly defined families of algebro-combinatorial structures of three kinds: combinatorial bases in representations, Newton--Okounkov bodies of flag varieties and toric degenerations of flag varieties. All three families are parametrized by the same family of polytopes: the marked chain-order polytopes of Fang and Fourier which interpolate between the type C Gelfand--Tsetlin and FFLV polytopes. Thus, in each case the obtained structures interpolate between the well-known bases, Newton--Okounkov bodies or degenerations associated with the latter two polytopes. We then obtain similar results for type B after introducing a new family of poset polytopes to be considered in place of marked chain-order polytopes. In both types our constructions and proofs rely crucially on a combinatorial connection between poset polytopes and pipe dreams. 
\end{abstract}

{\footnotesize\tableofcontents}

\section*{Introduction}

In modern Lie theory several kinds of structures have established themselves as useful tools providing combinatorial insights into the geometry of flag varieties and into the representation theory of the respective algebras. These tools include toric degenerations of flag varieties (\cite{GL,Ca,AB,GHKK,SoS}), their Newton--Okounkov bodies (\cite{Ka,FFL3,Ki17,FuHi,Fu}) and various combinatorial bases in representations, in particular, PBW-monomial bases (\cite{FFL1,FaFL2,MtypeB,Gor19,ChL}). 

Furthermore, it is somewhat typical for a structure of one of these kinds to be accompanied by structures of the other kinds. This phenomenon is observed in a range of works in which an algebro-combinatorial datum of a certain form is shown to provide several of the above structures. Perhaps, the first and best-known example are the string polytopes due to~\cite{L,BZ01} which parametrize crystal bases in representations and were later shown to define toric degenerations of Schubert varieties (\cite{Ca}) and Newton--Okounkov bodies of flag varieties (\cite{Ka}). This setting is extended by the birational sequences of~\cite{FaFL2} which provide toric degenerations, Newton--Okounkov bodies and monomial representation bases. A different approach is via semigroups of essential signatures and the associated polytopes which provide PBW-monomial bases, toric degenerations and Newton--Okounkov bodies (\cite{FFL3}). Yet another type of underlying datum is given by matching fields which again provide toric degenerations and Newton--Okounkov bodies (\cite{CM2,CM3}) as well as PBW-monomial bases (\cite{M3}). We now explain a certain discrepancy between general theory and explicit examples observed in this field which is one of the motivations for this project (see also \cite[page 1]{Che} for a much broader perspective).

In each of the above settings the respective datum must satisfy certain assumptions for the desired structures to exist. Finding instances in which these assumptions hold is highly nontrivial, let alone enumerating such instances. This leads to a shortage of known explicit examples of the mentioned structures, especially of concrete definitions which would work in some ``general'' situation rather than just ``small'' cases. Specifically, in the above works explicit examples which cover all type A flag varieties and/or all finite-dimensional $\fsl_n$-representations are essentially of two families. The first is the ``Gelfand--Tsetlin'' family of constructions arising as follows: the toric degeneration due to~\cite{Stu,GL,KM} is given by the Gelfand--Tsetlin polytope of~\cite{GT} which is also a Newton--Okounkov body (\cite{K}) and enumerates several combinatorial representation bases (\cite{GT,R,FaFL1}). 
The second is the ``FFLV'' (Feigin--Fourier--Littelmann--Vinberg) family: the PBW-monomial basis given by the FFLV polytope (\cite{FFL1}) and the corresponding Newton--Okounkov body and toric degeneration (\cite{FFL3}). 
In other classical types explicit examples are even scarcer with only some of the constructions in these two families having known counterparts.

In this regard, a notable advancement was made in~\cite{Fu} using marked chain-order polytopes (or MCOPs, introduced in \cite{FF}, see also~\cite{St,ABS}). Fujita shows that all MCOPs associated with the Gelfand--Tsetlin poset provide Newton--Okounkov bodies and toric degenerations of type $\rA_n$ flag varieties and also parametrize certain monomial bases. These polytopes form a large (i.e.\ growing with $n$) family that includes both the Gelfand--Tsetlin and FFLV polytopes. Thus, \cite{Fu} provides new large families of explicit examples of each of the three mentioned structures and generalizes the mentioned ``Gelfand--Tsetlin'' and ``FFLV''  families. The approach in~\cite{Fu} is via a certain geometrically defined valuation which provides the desired Newton--Okounkov body, the existence of the toric degeneration is then implied by a general result of~\cite{A}. The work~\cite{M3} was initiated with the aim of realizing these MCOP toric degenerations more directly as Gr\"obner (or sagbi) degenerations of the Pl\"ucker algebra, similarly to the classical approach in~\cite{GL}. This was done via a newly found connection between MCOPs and pipe dreams which was then also used to construct new PBW-monomial bases and standard monomial theories.

In this paper our goal is to address the discussed shortage of examples in Dynkin types B and C by constructing new families of toric degenerations, Newton--Okounkov bodies and PBW-monomial bases given by explicitly defined poset polytopes. There are substantial differences from type A presenting several challenges which will be outlined below. However, let us first mention that~\cite[Section 7]{Fu} explains in detail why its method does not generalize to type C. For us this serves as an additional motivation to look for an extension of the ``pipe dream method'' used in~\cite{M3} to type C and beyond.

Let us briefly overview our setup. We consider the type $\rC_n$ Gelfand--Tsetlin poset $P$, cf.\ \eqref{hasseC}. With a subset $O\subset P$ and an integral dominant weight $\la$ we associate the corresponding MCOP $\cQ_O(\la)\subset\bR^P$. The extremal choices of $O$ provide the type C Gelfand--Tsetlin polytope of~\cite{BZ} and the type C FFLV polytope of~\cite{FFL2}, other cases interpolate between these two. For a regular $\la$ the toric variety of $\cQ_O(\la)$ is cut out by $I_O$, a toric ideal in the polynomial ring $\bC[\cJ]$ with variables labeled by order ideals in $P$. The complete symplectic flag variety $F$ is cut out by the Pl\"ucker ideal $I$ that lies in a polynomial ring denoted by $S$. Furthermore, the negative roots are enumerated by poset elements:  $(i,j)\in P$ for every negative root vector $f_{i,j}\in\mathfrak{sp}_{2n}(\bC)$. Finally, we denote the irreducible representation by $V_\la$, its highest-weight vector by $v_\la$ and the corresponding equivariant line bundle on $F$ by $\cL_\la$. We prove the following.
\begin{customthm}{C}\hspace{-2mm}\footnotemark\label{thmC}
\footnotetext{Parts (a) and (c) of this theorem were announced without proof in the FPSAC extended abstract~\cite{FPSAC2024}.}
\hfill
\begin{enumerate}[label=(\alph*)]
\item (Theorem~\ref{degenmainC}.) A certain isomorphism $\psi:\bC[\cJ]\to S$ maps $I_O$ to an initial ideal of $I$. Hence, the toric variety of $\cQ_O(\la)$ with regular $\la$ is a flat degeneration of $F$.
\item (Theorem~\ref{mainbasisC}.) For a certain unimodular transformation $\xi$ of $\bR^P$ the vectors $\prod f_{i,j}^{x_{i,j}} (v_\la)$ with $x$ ranging over the lattice points of $\xi(\cQ_O(\la))$ form a basis in $V_\la$.
\item (Theorem~\ref{mainNOC}.) The polytope $\cQ_O(\la)$ is (up to translation) the Newton--Okounkov body of $F$ associated with the line bundle $\cL_\la$ and a certain valuation $\nu$.
\end{enumerate}    
\end{customthm}

The key ingredients of this theorem are the maps $\psi$, $\xi$  and $\nu$, which depend on $O$. These maps are defined in terms of diagrams which we call \textit{combinatorial type C pipe dreams}. These diagrams provide a method of associating a permutation of the set $\{1,\dots,n,-n,\dots,-1\}$ with every subset of $P$ (cf.\ Example~\ref{pipedreamex}). 
We use the word ``combinatorial'' to avoid confusion. In this paper, pipe dreams arise as a combinatorial tool which allows one to pass from poset polytopes to permutations and, eventually, to Lie-theoretic structures. While the diagrammatic definition of combinatorial type C pipe dreams is very similar to that of standard pipe dreams (originating in~\cite{BB}), their algebraic applications here are rather different. The reader should not expect applications to Schubert calculus similar to the results in \cite{FK1,KiNa,SmTu,FuNi,Fujita2022}: works that consider various other type C analogs of pipe dreams. 

A crucial difference from type A is that the Pl\"ucker algebra $S/I$ does not have a similarly convenient realization as a subalgebra of a polynomial ring generated by determinants, which would allow for explicit sagbi degenerations. It can be realized as a quotient of such a subalgebra but constructing explicit degenerations of this quotient is difficult. Instead, we use a certain Schubert variety as an intermediate step between the flag variety and the toric varieties. Specifically, we notice that the type $\rC_n$ flag variety can be degenerated into a type $\rA_{2n-1}$ Schubert variety (see Corollary~\ref{schubert}, this fact is somewhat reminiscent of the results in~\cite{CL} that realize PBW-degenerate flag varieties of types A and C as Schubert varieties). Now, this Schubert variety is realized by an algebra $\cR$ that can be generated by determinants, this allows us to obtain the toric rings $S/I_O$ as further sagbi degenerations of $\cR$. The latter sagbi degenerations then play a major role in the proofs of all parts of Theorem~\ref{thmC}. In particular, we show that general sagbi degenerations of $\cR$ provide PBW-monomial bases (Lemma~\ref{sagbiToMonomials}).
We also show that for every $O$ we obtain a standard monomial theory in the Pl\"ucker algebra which can be interpreted in terms of a certain class of Young tableaux (Corollary~\ref{standmon}).

In type B the first problem is to define analogs of MCOPs which would form a family interpolating between the type B Gelfand--Tsetlin polytope of~\cite{BZ} and the type B FFLV polytope defined in~\cite{MtypeB}. This family is parametrized by subsets $O$ of the same poset $P$, for a weight $\la$ we denote the corresponding polytope $\cQ^B_O(\la)$. The definition is similar to that of MCOPs but involves scaling along certain coordinates. Although $\cQ^B_O(\la)$ is not necessarily a lattice polytope, its intersection with a certain lattice contains $\dim V_\la$ points. This allows for the desired Lie-theoretic applications. 

Now, in type B we find the Pl\"ucker algebra to be unfeasibly complicated for our purposes and instead we choose an approach via essential signatures similar to~\cite{FFL3}. We show that the lattice points of a transformed version of $\cQ^B_O(\la)$ form the set of essential signatures: perhaps surprisingly, the most difficult part here is dealing with the case of fundamental $\la$. The geometric results concerning toric degenerations and Newton--Okounkov bodies are then derived using the embedding of the partial flag variety $F_\la$ into the $\bP(V_\la)$. This also leads to a family of standard tableau theories (Remark~\ref{tableauxB}). The main results are summed up by
\begin{customthm}{B}\label{thmB}
\hfill
\begin{enumerate}[label=(\alph*)]
\item (Theorem~\ref{essentialB}.) For a certain unimodular transform $\Pi^\rB_O(\la)$ of $\cQ^B_O(\la)$ the vectors $\prod f_{i,j}^{x_{i,j}} (v_\la)$ with $x$ ranging over the lattice points in $\Pi^\rB_O(\la)$ form a basis in $V_\la$.
\item (Theorem~\ref{degenmainB}.) The toric variety of $\cQ^\rB_O(\la)$ is a flat degeneration of $F_\la$.
\item (Theorem~\ref{mainNOB}.) For a regular $SO_{2n+1}$-weight $\la$ polytope of $\cQ^\rB_O(\la)$ is (up to translation) a Newton--Okounkov body of the complete flag variety $F_\la$.
\end{enumerate}    
\end{customthm}

The main combinatorial ingredients of all three constructions in Theorem~\ref{thmB} are defined using the same notion of pipe dreams as in type C.

We conclude the introduction with several remarks. First, we comment on the generality of specific parts of Theorems~\ref{thmB} and~\ref{thmC}. A common approach in the study of toric degenerations and Newton--Okounkov bodies of flag varieties is to concentrate on complete flags even if the results have straightforward extensions to the partial case, cf.\ \cite{Ca,KM,Ka,Ki17,Fu} and others. This is somewhat similar to our approach: we prioritize complete flag varieties, however, Theorem~\ref{thmC}(c) and Theorem~\ref{thmB}(b) are proved for arbitrary partial flags since this is easily achieved within the same framework. In contrast, proving Theorem~\ref{thmC}(a) or Theorem~\ref{thmB}(c) in that generality, while presumably possible, would require a considerable amount of additional technical discussion and is avoided in the interests of readability.


Next, for general $O$ the structures provided by Theorems~\ref{thmC} and~\ref{thmB} are new but some of those obtained in the special cases of Gelfand--Tsetlin and FFLV polytopes appear in the literature. In Type C the resulting six structures (or variations thereof) are due to~\cite{Ca,FFL2,Ka,FFL3,Ki19,MY}. In type B the basis for the Gelfand--Tsetlin case and the toric degeneration and Newton--Okounkov body for the FFLV case appear to be new, the three remaining cases are due to~\cite{Ca,Ka,MtypeB}.

Finally, the first obstacle to extending these results to type D is finding an appropriate family of polytopes, in particular, a type D analog for FFLV polytopes is not known.

\section{Type C}

\subsection{Type C Lie algebras and representations}

Choose an integer $n\ge 1$ and consider the Lie algebra $\fg=\mathfrak{sp}_{2n}(\bC)$ with Cartan subalgebra $\fh$. Denote the simple roots by $\alpha_1,\dots,\alpha_n\in\fh^*$. We have an (orthogonal) basis $\varepsilon_1,\dots,\varepsilon_n$ in $\fh^*$ such that $\alpha_i=\varepsilon_i-\varepsilon_{i+1}$ for $i\le n-1$ and $\alpha_n=2\varepsilon_n$. The positive roots are indexed by pairs of integers $i,j$ such that $i\in[1,n]$ and $j\in[i+1,n]\cup[-n,-i]$. The root $\alpha_{i,j}$ is equal to $\varepsilon_i-\varepsilon_j$ when $j>0$ and to $\varepsilon_i+\varepsilon_j$ when $j<0$. In particular, $\alpha_i=\alpha_{i,i+1}$ for $i\le n-1$, $\alpha_n=\alpha_{n,-n}$ and the roots $\alpha_{i,-i}$ are long. 

Denote the fundamental weights by $\om_1,\dots,\om_n$. We write $(a_1,\dots,a_n)$ to denote the weight $\la=a_1\om_1+\dots+a_n\om_n$. For an integral dominant $\la$ (i.e.\ all $a_i\in\bZ_{\ge0}$) the irreducible representation with highest weight $\la$ is denoted by $V_\la$, its highest-weight vector by $v_\la$.

In terms of the orthogonal basis one has $\om_i=\varepsilon_1+\dots+\varepsilon_i$. For a weight $\la=(a_1,\dots,a_n)$ we denote its coordinates with respect to the basis $\varepsilon_1,\dots,\varepsilon_n$ by $(\la(1),\dots,\la(n))$ so that $\la(i)=a_i+\dots+a_n$.

\subsection{Type C poset polytopes}

Consider the totally ordered set \[(N,\lessdot)=\{1\lessdot\dots\lessdot n\lessdot -n\lessdot\dots\lessdot -1\}.\] 

\begin{definition}
The \textit{type C Gelfand--Tsetlin poset} $(P,\prec)$ consist of pairs of integers $(i,j)$ such that $i\in[1,n]$ and $i\le |j|\le n$. The order relation is given by: $(i_1,j_1)\preceq(i_2,j_2)$ if and only if $i_1\le i_2$ and $j_1\ledot j_2$.
\end{definition}

Below is the Hasse diagram of $(P,\prec)$ for $n=2$.
\begin{equation}
\begin{tikzcd}[row sep=tiny,column sep=0]\label{hasseC}
(1,1)\arrow[rd]&&(2,2)\arrow[rd]\\
&(1,2)\arrow[rd]\arrow[ru]&&(2,-2)\\
&&(1,-2)\arrow[ru]\arrow[rd]\\
&&&(1,-1)
\end{tikzcd}
\end{equation}

Marked chain-order polytopes (MCOPs) were introduced in the paper~\cite{FF} and the polytopes we consider below are a special case of this notion. 

Let $A\subset P$ be the set of all $(i,i)$. Throughout the section we fix a subset $O\subset P$ containing $A$, the polytopes are determined by this choice. We give two equivalent definitions, a proof of the equivalence can be found in~\cite[Subsection 3.5]{FM2}. It should be noted that in the terminology of~\cite{FM2} the polytope defined below is the marked chain-order polytope defined by the following data. One considers the poset $(P',\prec)$ obtained from $(P,\prec)$ by adding an element $p$ satisfying $p\succ (i,j)$ for all $(i,j)$, the subset of marked elements $A\cup\{p\}$, the marking associating $\la(i)$ to $(i,i)$ and 0 to $p$ and the partition $P\bs A=P\bs O\sqcup O\bs A$.

\begin{definition}\label{mcopHdefC}
For an integral dominant $\fg$-weight $\la$ the MCOP (or \textit{type C poset polytope}) $\cQ_O(\la)\subset\bR^P$ consists of points $x$ such that:
\begin{itemize}
\item $x_{i,i}=\la(i)$ for all $1\le i\le n$,
\item all $x_{i,j}\ge 0$,
\item for every chain $(p,q)\prec(i_1,j_1)\prec\dots\prec(i_m,j_m)\prec(r,s)$ with $(p,q)\in O$, $(r,s)\in P$ and all $(i_l,j_l)\notin O$ one has \[x_{i_1,j_1}+\dots+x_{i_m,j_m}\le x_{p,q}-x_{r,s},\]
\end{itemize}    
\end{definition}

If $O=P$, then $\cQ_O(\la)$ is a Berenstein--Zelevisnky polytope constructed in~\cite{BZ}, it is also known as the \textit{type C Gelfand--Tsetlin polytope}. It consists of points $x$ with $x_{i,i}=\la(i)$ and $x_{i,j}\ge x_{i',j'}\ge 0$ whenever $(i,j)\preceq(i',j')$. If $O=A$, then $\cQ_O(\la)$ is the \textit{type C Feigin--Fourier--Littelmann--Vinberg (FFLV) polytope} of \cite{FFL2} given by restricting the sum over every chain in $P\bs A$. Other cases can be said to interpolate between these two.

Let $\cJ$ denote the set of order ideals (lower sets) in $(P,\prec)$. For $k\in[1,n]$ let $\cJ_k$ consist of $J\in\cJ$ such that $|J\cap A|=k$, i.e.\ $J$ contains $(k,k)$ but not $(k+1,k+1)$. For $X\subset P$ let $\one_X\in\bR^P$ denote its indicator vector.
\begin{definition}\label{mcopdefC}
For $J\in\cJ$ denote \[M_O(J)=(J\cap O)\cup\max\nolimits_{\prec}(J)\] where $\max_\prec$ is the subset of $\prec$-maximal elements. The type C poset polytope $\cQ_O(\om_k)$ is the convex hull of all $\one_{M_O(J)}$ with $J\in\cJ_k$. For $\la=(a_1,\dots,a_n)$ the polytope $\cQ_O(\la)$ is the Minkowski sum \[a_1\cQ_O(\om_1)+\dots+a_n\cQ_O(\om_n).\]
\end{definition}

Since $\cQ_O(\om_k)$ is a 0/1-polytope, the set $\{1_{M_O(J)}\}_{J\in\cJ_k}$ is its vertex set and also its set of lattice points. It is not hard to check that $|\cJ_k|={2n\choose k}-{2n\choose k-2}=\dim V_{\om_k}$. More generally, the following holds.

\begin{lemma}[{\cite[Corollary 3.1.9]{FM2}}]\label{pointdecomp}
For an integral dominant $\la=(a_1,\dots,a_n)$ and a lattice point $x\in\cQ_O(\la)$ there exists a unique sequence of order ideals $J_1\subset\cdots\subset J_m$ in $\cJ$ such that \[x=\one_{M_O(J_1)}+\dots+\one_{M_O(J_m)}\] and for every $k$ exactly $a_k$ of the $J_i$ lie in $\cJ_k$.
\end{lemma}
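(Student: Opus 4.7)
The plan is to reconstruct the $J_i$'s from the data of $x$ coordinate-by-coordinate, thereby establishing both uniqueness and, via the same formulas, existence.

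For uniqueness, first consider coordinates at $p \in O$. By definition $M_O(J_i) = (J_i \cap O) \cup \max_\prec J_i$, so for $p \in O$ one has $p \in M_O(J_i)$ precisely when $p \in J_i$. Because the $J_i$ form a nested chain, the set $\{i : p \in J_i\}$ is a suffix $\{s_p, s_p+1, \ldots, m\}$, and the equation $x_p = m - s_p + 1$ uniquely determines $s_p$. Applied at $p = (k,k) \in A$ this gives $x_{k,k} = |\{i : |J_i \cap A| \geq k\}|$, hence the number of $J_i$ with $|J_i \cap A| = k$ equals $x_{k,k} - x_{k+1,k+1} = \la(k) - \la(k+1) = a_k$. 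Thus $J_i \cap O$ and the $\cJ_k$-type of each $J_i$ are determined.

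For $p \in P \bs O$ the reconstruction is more delicate since $p \in M_O(J_i)$ iff $p$ is $\prec$-maximal in $J_i$. Here I would invoke the chain inequalities of Definition~\ref{mcopHdefC}: along a saturated chain $(p_0, q_0) \prec (i_1, j_1) \prec \cdots \prec (i_t, j_t) \prec (r, s)$ with endpoints in $O$ and interior in $P \bs O$, the inequality $\sum_\ell x_{i_\ell, j_\ell} \leq x_{p_0, q_0} - x_{r, s}$ can be reinterpreted, given a putative decomposition, as counting on each side the distribution of $J_i$-maxima landing on this chain. Since the $O$-part of each $J_i$ is already reconstructed, this accounting pins down, for every intermediate element $p$, precisely those indices $i$ with $p \in J_i$, hence $J_i$ in its entirety.

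For existence one reverses the procedure: given a lattice point $x \in \cQ_O(\la)$, define $J_i$ by the formulas obtained in the uniqueness arguments. The defining inequalities of $\cQ_O(\la)$ guarantee that the resulting $J_i$ are order ideals of $P$ nested as $J_1 \subset \cdots \subset J_m$ with the correct $\cJ_k$-counts, and coordinate-wise verification yields $\sum_i \one_{M_O(J_i)} = x$. The main obstacle is the treatment at $p \in P \bs O$, where $x_p$ only counts indices $i$ for which $p$ is $\prec$-maximal in $J_i$ rather than memberships $p \in J_i$; resolving this requires simultaneous control along all saturated chains through $p$ with endpoints in $O$, which is exactly what the chain inequalities of Definition~\ref{mcopHdefC} are designed to encode. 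A parallel route worth mentioning is via a Stanley-style transfer map identifying $\cQ_O(\la)$ unimodularly with the marked order polytope on the same data, where the analogous decomposition is classical, coinciding with the level-set decomposition of an order-preserving function.
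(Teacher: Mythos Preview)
The paper does not prove this lemma; it is quoted from the cited reference without argument. The standard proof there goes through the piecewise-linear transfer map identifying marked chain-order polytopes with marked order polytopes --- exactly the ``parallel route'' you mention in your last sentence but do not carry out.

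Your direct approach has a real gap at the elements $p\in P\setminus O$. For uniqueness, the chain inequalities are the wrong tool: they are \emph{inequalities}, not equalities, and your reinterpretation as ``counting $J_i$-maxima along the chain'' does not pin anything down. For a fixed index $i$, at most one interior element of such a chain can be $\prec$-maximal in $J_i$, but nothing forces exactly one to be, since an interior element may have covers off the chain that already lie in $J_i$. The clean elementary argument instead processes $P$ from the top: once $s_q=\min\{i:q\in J_i\}$ is known for every $q$ covering $p$, set $t_p=\min_q s_q$ (with $t_p=m+1$ if $p$ is $\prec$-maximal); then $p\in\max_\prec J_i$ exactly for $i\in[s_p,t_p-1]$, so $x_p=t_p-s_p$ determines $s_p$. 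This gives uniqueness and, run in reverse, a candidate construction for existence.

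For existence you still owe a verification that the resulting function $p\mapsto s_p$ is order-preserving (so that the $J_i$ are nested order ideals) and takes values in $[1,m+1]$. Checking $s_p\le s_q$ across the four cases according to whether $p,q$ lie in $O$ is where the defining inequalities of $\cQ_O(\la)$ genuinely enter, and it is not a consequence of any single chain inequality; several must be combined. This is exactly the work you defer with ``The defining inequalities \ldots guarantee''. It can be completed, but as written the argument is a sketch rather than a proof; developing the transfer-map route would be shorter and is what the cited source does.
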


This shows that the number of lattice points in $\cQ_O(\la)$ does not depend on $O$. A fundamental property of the type C Gelfand--Tsetlin polytopes $\cQ_P(\la)$ and type C FFLV polytopes $\cQ_A(\la)$ is that both have $\dim V_\la$ lattice points. We deduce
\begin{cor}
$\cQ_O(\la)$ contains exactly $\dim V_\la$ lattice points.   
\end{cor}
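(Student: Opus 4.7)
The plan is to deduce this directly from Lemma~\ref{pointdecomp} together with the known lattice point counts for the two extremal choices $O=P$ and $O=A$. The essential observation is that the combinatorial data parametrizing the lattice points in Lemma~\ref{pointdecomp} — namely, chains $J_1\subset\cdots\subset J_m$ of order ideals with prescribed distribution among the $\cJ_k$ — depends only on the poset $(P,\prec)$ and on $\la$, not on the subset $O$. Consequently, the map sending the chain $(J_1,\dots,J_m)$ to the point $\one_{M_O(J_1)}+\dots+\one_{M_O(J_m)}$ provides a bijection between a set independent of $O$ and the lattice points of $\cQ_O(\la)$.

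Concretely, first I would note that Lemma~\ref{pointdecomp} implies
\[
|\cQ_O(\la)\cap\bZ^P|=\#\{(J_1\subset\cdots\subset J_m)\mid\#\{i:J_i\in\cJ_k\}=a_k\text{ for all }k\},
\]
with the right-hand side manifestly independent of $O\supset A$. In particular, $|\cQ_O(\la)\cap\bZ^P|=|\cQ_P(\la)\cap\bZ^P|$ for every admissible $O$.

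Then I would invoke the classical fact, recalled in the paragraph preceding the corollary, that the type C Gelfand--Tsetlin polytope $\cQ_P(\la)$ of~\cite{BZ} has exactly $\dim V_\la$ lattice points (equivalently, one could appeal to the type C FFLV polytope $\cQ_A(\la)$ of~\cite{FFL2}, which also has this property). Combining the two statements yields the claim.

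There is essentially no obstacle here: the content of the corollary is entirely in Lemma~\ref{pointdecomp}, which supplies the $O$-independence, and in the pre-existing lattice-point formulas for the two extremal polytopes. The only point requiring attention is to make sure the uniqueness clause of Lemma~\ref{pointdecomp} is used (so that the map from chains to lattice points is injective as well as surjective), ensuring an equality of cardinalities rather than merely an inequality.
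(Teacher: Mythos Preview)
Your proposal is correct and matches the paper's own argument essentially verbatim: the paper observes that Lemma~\ref{pointdecomp} makes $|\cQ_O(\la)\cap\bZ^P|$ independent of $O$ and then invokes the known lattice point counts for $\cQ_P(\la)$ and $\cQ_A(\la)$.
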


Another rather helpful feature of the lattice point sets is the so-called Minkowski sum property (which also follows from Lemma~\ref{pointdecomp}):
\begin{theorem}[{\cite[Theorem 2.8]{FFP}}]\label{minkowskiC}
For any integral dominant weights $\la$ and $\mu$ one has \[\cQ_O(\la+\mu)\cap\bZ^P=\cQ_O(\la)\cap\bZ^P+\cQ_O(\mu)\cap\bZ^P.\]
\end{theorem}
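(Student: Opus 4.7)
The plan is to prove the two inclusions separately, using Lemma~\ref{pointdecomp} as the main tool for the nontrivial direction.

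The inclusion $\supseteq$ is essentially immediate from the Minkowski sum formulation in Definition~\ref{mcopdefC}: writing $\la=(a_1,\dots,a_n)$ and $\mu=(b_1,\dots,b_n)$, one has
\[\cQ_O(\la)+\cQ_O(\mu)=\sum_k a_k\cQ_O(\om_k)+\sum_k b_k\cQ_O(\om_k)=\sum_k(a_k+b_k)\cQ_O(\om_k)=\cQ_O(\la+\mu),\]
and the sum of two lattice points is a lattice point. So any element of $\cQ_O(\la)\cap\bZ^P+\cQ_O(\mu)\cap\bZ^P$ lies in $\cQ_O(\la+\mu)\cap\bZ^P$.

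For $\subseteq$, given $x\in\cQ_O(\la+\mu)\cap\bZ^P$, I would invoke Lemma~\ref{pointdecomp} to obtain the unique chain $J_1\subset\cdots\subset J_m$ with $x=\sum_i\one_{M_O(J_i)}$ and exactly $a_k+b_k$ of the $J_i$ lying in $\cJ_k$. The key observation is that $J_i\subset J_{i+1}$ implies $|J_i\cap A|\le|J_{i+1}\cap A|$, since elements of $A$ are never removed when enlarging an order ideal. Consequently, the sequence $|J_1\cap A|,\dots,|J_m\cap A|$ is weakly increasing, and the chain splits into contiguous blocks $J^{(k)}_1\subset\cdots\subset J^{(k)}_{a_k+b_k}$ indexed by $k$.

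Within each block I would assign the first $a_k$ ideals to $\la$ and the remaining $b_k$ to $\mu$. Concatenating the $\la$-assignments across all $k$ (in the order inherited from the original chain) yields a chain containing exactly $a_k$ members of each $\cJ_k$; Definition~\ref{mcopdefC} guarantees that each $\one_{M_O(J)}$ with $J\in\cJ_k$ is a lattice point of $\cQ_O(\om_k)$, so summing such indicators according to the multiplicities $a_k$ produces a lattice point $y\in\sum_k a_k\cQ_O(\om_k)=\cQ_O(\la)$. The analogous sum for the $\mu$-assignments gives a lattice point $z\in\cQ_O(\mu)$, and by construction $x=y+z$. There is no serious obstacle once Lemma~\ref{pointdecomp} is available; the only bookkeeping is the block decomposition of the chain, which follows from the elementary monotonicity of $|J\cap A|$ under inclusion of ideals.
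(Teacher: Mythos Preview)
Your proposal is correct and follows exactly the route the paper indicates: the paper does not spell out a proof but simply remarks that the result ``also follows from Lemma~\ref{pointdecomp}'', and you have correctly unwound that remark. One small simplification: once Lemma~\ref{pointdecomp} provides the decomposition $x=\sum_i \one_{M_O(J_i)}$ with the prescribed multiplicities, you do not need the subsequences assigned to $\la$ and $\mu$ to themselves form chains---any splitting of the $a_k+b_k$ ideals in $\cJ_k$ into groups of sizes $a_k$ and $b_k$ already gives $y\in\cQ_O(\la)$ and $z\in\cQ_O(\mu)$ by Definition~\ref{mcopdefC}, so the monotonicity observation about $|J_i\cap A|$ is not strictly needed.
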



\subsection{Combinatorial type C pipe dreams}

Let $\mathcal S_N$ denote the group of all permutations (automorphisms) of the set $N$. For $(i,j)\in P$ let $s_{i,j}\in \mathcal S_N$ denote the transposition which exchanges $i$ and $j$ and fixes all other elements ($s_{i,i}=\id$). 

\begin{definition}
For $M\subset P$, let $w_M\in \mathcal S_N$ be the product of all $s_{i,j}$ with $(i,j)\in M$ ordered first by $i$ increasing from left to right and then by $j$ increasing with respect to $\lessdot$ from left to right.
\end{definition}

Note that $w_M$ is fully determined by $M\bs A$. However, it will be more convenient for us to associate permutations to subsets of $P$ rather than $P\bs A$.

A diagrammatic approach to this notion which motivates the name ``pipe dreams'' (this term is due to~\cite{KnM}) is as follows. Let the poset $(P,\prec)$ be visualized as in~\eqref{hasseC}. The \textit{combinatorial type C pipe dream} (or just \textit{pipe dream} for brevity) of the set $M\subset P$ consists of $2n$ \textit{pipes} enumerated by $N$. Each pipe can be thought of as polygonal curve with vertices at elements of $P$. For $i\in[1,n]$ the $i$th pipe enters the element $(i,-i)$ from the \textbf{bottom}-right and continues in this direction until it reaches an element of $M\cup A$, after which it turns left and continues going to the bottom-left until it reaches an element of $M$, after which it turns right and again continues to the top-left until it reaches an element of $M\cup A$, etc. Meanwhile, for $i\in[-n,-1]$ the $i$th pipe enters the element $(-i,i)$ from the \textbf{top}-right and then also turns each time it encounters an element of $M\cup A$. It is then easy to see that the last element of $P$ passed by the pipe will be $(1,w_M(i))$. 

\begin{example}\label{pipedreamex}
The pipe dream of the set $M=\{(1,1),(1,3),(1,-2),(2,2),(2,3),(3,-3)\}$ is shown below with each pipe in its own colour (here $n=3$). One obtains \[w_M(1,2,3,-3,-2,-1)=(-2,1,-3,2,3,-1)\] which agrees with $w_M=s_{1,1}s_{1,3}s_{1,-2}s_{2,2}s_{2,3}s_{3,-3}$.
\begin{center}
\begin{tikzcd}[row sep=tiny,column sep=0]
&(1,1)\ar[green]{ld}\ar[green]{ld}&&(2,2)\ar[magenta]{ld}&&\color{lightgray}{(3,3)}\ar[magenta]{ld}&&\phantom{(1,1)}\ar[magenta]{ld}\\
\phantom{(1,1)}&&\color{lightgray}(1,2)\ar[green]{lu}\ar[magenta]{ld}&&(2,3)\ar[green]{ld}\ar[magenta]{lu}&&{(3,-3)}\ar[blue]{ld}\ar[magenta]{lu}&&\phantom{(1,1)}\\
&\phantom{(1,1)}&&(1,3)\ar[green]{lu}\ar[orange]{ld}&&\color{lightgray}{(2,-3)}\ar[green]{lu}\ar[blue]{ld}&&\phantom{(1,1)}\ar[blue]{lu}\ar[orange]{ld}\\
&&\phantom{(1,1)}&&\color{lightgray}(1,-3)\ar[blue]{ld}\ar[orange]{lu}&&\color{lightgray}(2,-2)\ar[green]{lu}\ar[orange]{ld}&&\phantom{(1,1)}\\
&&&\phantom{(1,1)}&&(1,-2)\ar[red]{ld}\ar[orange]{lu}&&\phantom{(1,1)}\ar[green]{lu}\ar[cyan]{ld}\\
&&&&\phantom{(1,1)}&&\color{lightgray}(1,-1)\ar[red]{lu}\ar[cyan]{ld}\\
&&&&&\phantom{(1,1)}&&\phantom{(1,1)}\ar[red]{lu}
\end{tikzcd}
\end{center}
\end{example}

\begin{example}
    When $n=2$, a total of 12 different permutations arise as $w_M$ for the 16 subsets $M\subset P\backslash A$.
    If $w_M$ is applied to $(1,2,-2,-1)$ elementwise, each of
    \[(-2,2,1,-1), (-2,1,2,-1), (-1,2,1,-2), (-1,1,2,-2)\]
    is obtained for two distinct $M$, while 8 more are given by a single $M$:
    \begin{multline*}
    (1,2,-2,-1),(1,-2,2,-1),(2,1,-2,-1),(2,-2,1,-1),\\(-1,2,-2,1),(-1,-2,2,1),(-1,-2,1,2),(-1,1,-2,2).
    \end{multline*}
\end{example}


Next, more formally, for $i\in N$ and $M\subset P$ we will view the $i$th pipe of $M$ as a sequence $(i_1,j_1)\succ\dots\succ(i_m,j_m)$ in $P$. This sequence forms a saturated chain, starts with $(i_1,j_1)=(|i|,-|i|)$ and and ends with $(i_m,j_m)=(1,w_M(i))$.

\begin{remark}
Pipe dreams of type C can be thought of as special cases of classical type A pipe dreams. Indeed, consider the type $\rA_{2n-1}$ Gelfand--Tsetlin poset $Q$ consisting of $(i,j)$ with $1\le i\le j\le 2n$. One can view $P$ as the ``left half'' of $Q$ by identifying $(i,j)\in P$ with $(i,2n+1+j)\in Q$ when $j<0$. The $2n$ pipes in the type C pipe dream of $M\subset P$ will just be end parts of the $2n$ pipes in the type A pipe dream of $M\subset Q$.
\end{remark}

The choice of the subset $O$ provides a ``twisted'' correspondence between subsets and permutations and also associates an element of $N$ with every element of $P$. These correspondences play a key role in our construction.
\begin{definition}\label{twisted}
\hfill
\begin{itemize}
\item For $M\subset P$ denote $w^O_M=w_O^{-1}w_M$. 
\item For $J\in\cJ$ we use the shorthand $w^{O,J}=w^O_{M_O(J)}$. 
\item For $(i,j)\in P$ let $\langle i,j\rangle$ denote the order ideal of all $(i',j')\preceq(i,j)$. 
\item We set $r(i,j)=w^{O,\langle i,j\rangle}(i)$.
\end{itemize}
\end{definition}

In other words, $w^O_M(i)$ is the number of the pipe in the pipe dream of $O$ which ends in the same element as the $i$th pipe of $M$. 
To understand the value $r(i,j)$ one may think of a ``pipe'' which, instead of starting in some $(l,-l)$, starts from $(i,j)$ going to the bottom-left and then turns at elements of $O$. The value $r(i,j)$ is the number of the pipe in the pipe dream of $O$ which ends in the same element as this ``pipe''.

\begin{example}\label{rex}
Consider $n=3$ and \[O=\{(1,1), (1,3), (1,-2), (2,2), (2,3), (3,3), (3,-3)\}.\] The pipe dream of $O$ coincides with the pipe dream of the set $M$ in Example~\ref{pipedreamex}, hence $w_O(1,2,3,-3,-2,-1)=(-2,1,-3,2,3,-1)$. One can compute \[w_{M_O(\langle 2,-3 \rangle)}(1,2,3,-3,-2,-1)=(3,-3,2,1,-2,-1),\] \[w^{O, \langle 2,-3 \rangle}(1,2,3,-3,-2,-1)=(-2,3,-3,2,1,-1).\] In particular, $r(2,-3)=3$. Furthermore, one may check that $r(1,1)=2$, $r(1,2)=-3$, $r(1,3)=-2$, $r(1,-3)=3$, $r(1,-2)=1$, $r(1,-1)=-1$, $r(2,2)=-3$, $r(2,3)=2$, $r(2,-3)=3$, $r(2,-2)=-2$, $r(3,3)=-3$, $r(3,-3)=3$ (cf.\ Proposition~\ref{wrproperties}(b)).
\end{example}

We list some properties of these notions which we will use, occasionally without reference. On a general note, let us mention that while we aim to keep our combinatorial arguments concerning pipe dreams rigorous and complete, it should still be helpful to visualize these arguments and the reader is encouraged to do so.
\begin{proposition}\label{wrproperties}
\hfill
\begin{enumerate}[label=(\alph*)]
\item\label{wOisr} For $J\subset \cJ_k$ and $i\in[1,k]$ consider the $\lessdot$-maximal $j$ with $(i,j)\in M_O(J)$. Then $w^{O,J}(i)=r(i,j)$. 
\item For $i\in[1,n]$ the set $\{r(i,j)\}_{j\in [i,n]\cup[-n,-i]}$ coincides with $[i,n]\cup[-n,-i]$.
\item\label{rijgei} For any $J\in\cJ_k$ and $i\in[1,k]$ one has $|w^{O,J}(i)|\ge i$.
\item For $l\in[1,n]$ the $l$th pipe of $O$ contains all $(i,j)$ with $r(i,j)=l$.
\item\label{rbound1} For $(i,j)\in P$ one has $|r(i,j)|\ge i$ and $|r(i,j)|\ledot -j$.
\item\label{rbound2} For $(i,j)\in O$ suppose there exists a $\lessdot$-minimal $j'\gtrdot j$ with $(i,j')\in O$. Then $|r(i,j)|\ledot -j'$.
\end{enumerate}
\end{proposition}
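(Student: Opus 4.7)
The core of the proof is a geometric reading of $r(i,j)$ directly from the pipe dream of $O$: I argue that $r(i,j)$ is the label of the unique pipe of the $O$-dream whose step immediately after $(i,j)$ is to $(i-1,j)$ (or to the boundary below $(1,j)$ when $i=1$). Granting this reading, (b), (d), (e), (f) reduce to monotonicity of pipe trajectories, and (c) combines (a) with (e).

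I first handle (a). Set $j^*=\max_\lessdot\{j:(i,j)\in M_O(J)\}$, which exists since $(i,i)\in A\subseteq M_O(J)$. Pipe $i$ in the dream of $M_O(J)$ moves upward through column $i$ from $(i,-i)$, encountering $(i,j^*)$ as the first element of $M_O(J)\cup A$ on its path, so $(i,j^*)$ is its first turn. I then check
\[
M_O(J)\cap\langle i,j^*\rangle=M_O(\langle i,j^*\rangle):
\]
indeed, $(i,j^*)\in J$ forces $\langle i,j^*\rangle\subseteq J$, so $(J\cap O)\cap\langle i,j^*\rangle=O\cap\langle i,j^*\rangle$, while any element of $\max_\prec J$ inside $\langle i,j^*\rangle$ must equal $(i,j^*)$ since $(i,j^*)\in J$ is weakly above it. After the first turn pipe $i$ descends inside $\langle i,j^*\rangle$ forever, because every subsequent step either decreases the first coordinate by one or replaces the second by its $\ledot$-predecessor. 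Its trajectory therefore depends only on $M_O(\langle i,j^*\rangle)$, yielding $w_{M_O(J)}(i)=w_{M_O(\langle i,j^*\rangle)}(i)$ and, on applying $w_O^{-1}$, $w^{O,J}(i)=r(i,j^*)$. Specializing to $J=\langle i,j\rangle$ (so $j^*=j$) yields the geometric reading advertised above, and hence also (d).

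Parts (e), (c), (b) follow from monotonicity. Tracing pipe $l=r(i,j)$ backward from its step out of $(i,j)$ toward $(i-1,j)$, a short case analysis on the two pairings at a node (turn versus crossing) shows that each backward step either keeps the first coordinate fixed and replaces the second by its $\ledot$-successor, or increases the first coordinate by one while keeping the second fixed. Iterating back to $(|l|,-|l|)$ gives $|l|\ge i$ and $j\ledot-|l|$, the latter rephrased as $|l|\ledot-j$; this proves (e), whence (c) via (a). For (b), the symmetric forward monotonicity (first coordinate non-increasing, second $\ledot$-non-increasing) shows that once pipe $l$ exits column $i$ it never returns, so no pipe has two such exits in one column and $r(i,\cdot)$ is injective; combined with the image containment $\{r(i,j)\}_j\subseteq[i,n]\cup[-n,-i]$ from (e), a cardinality count gives the bijection.

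Finally (f): I trace the corner sequence of pipe $l=r(i,j)$ backward. At the turn $(i,j)$, the pipe's exit toward $(i-1,j)$ forces entry from the $\ledot$-successor of $j$ in column $i$; it then traversed crossings up column $i$ until the immediately earlier turn $(i,j_0)$, which satisfies $j_0\gtrdot j$ and $(i,j_0)\in O\cup A$. Since $i\ledot j\lessdot j_0$ forces $j_0\ne i$, we have $(i,j_0)\in O$; and since all $(i,k)$ with $j\lessdot k\lessdot j_0$ are crossings, $j_0$ is the minimum of $\{j'':j''\gtrdot j,\,(i,j'')\in O\}$, so $j_0=j'$. Continuing the backward trace, the second coordinate is $\ledot$-non-decreasing, so at the start of the pipe we get $j'\ledot-|l|$, equivalent to $|l|\ledot-j'$. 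The only degenerate case is when pipe $l$ has no earlier corner, which forces $j=-i$ and hence makes the hypothesis of (f) vacuous. I expect the main challenge throughout to be keeping the case analysis of pairings (turn versus crossing, matching each entry edge to the correct exit edge) clean and consistent; once this bookkeeping is set up, every part of the proposition drops out.
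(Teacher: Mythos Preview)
Your approach is correct and matches the paper's: both hinge on the fact that the $r(i,j)$th pipe of $O$ passes through $(i,j)$ and exits toward $(i-1,j)$ (your ``geometric reading''; the paper establishes exactly this within its proof of (b)), after which (b)--(f) follow from the monotonicity $(i,j)\preceq(|r(i,j)|,-|r(i,j)|)$ of pipe trajectories. Two small points to tighten. First, specializing your argument for (a) to $J=\langle i,j\rangle$ is tautological and does not by itself yield the geometric reading---you also need the observation that $M_O(\langle i,j\rangle)$ and $O$ agree on $\langle i,j\rangle\setminus\{(i,j)\}$, so that the $O$-pipe leaving $(i,j)$ toward $(i-1,j)$ has the same tail as the $i$th pipe of $M_O(\langle i,j\rangle)$ (this is precisely what the paper checks). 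Second, in (f) the degenerate ``no earlier corner'' case does not force $j=-i$: the backward trace can run $(i,j),(i,j^+),\dots,(i,-i)$ with $j\neq -i$; what it does force is that no $j'\gtrdot j$ with $(i,j')\in O$ exists, and that is what makes the hypothesis of (f) vacuous.
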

\begin{proof}
\hfill

(a) It is evident that the $i$th pipe of $M_O(J)$ coincides with the $i$th pipe of $M_O(\al i,j\ar)$, the claim follows by Definition~\ref{twisted}.

(b) First, note that the values $r(i,j)$ for a given $i$ must be pairwise distinct. Indeed, suppose the $i$th pipes of $M_O(\al i,j_1\ar)$ and $M_O(\al i,j_2\ar)$ end in the same element. Let $(i',j')$ be the $\prec$-maximal element contained in both pipes. For either pipe the element preceding $(i',j')$ is determined by the direction in which the pipe leaves $(i',j')$ and whether $(i',j')$ lies in the respective set. However, we have $i'<i$ and $j'\ledot \min_\lessdot(j_1,j_2)$ which means that $(i,j')$ lies in $M_O(\al i,j_1\ar)$ if and only if it lies in $M_O(\al i,j_2\ar)$. The direction in which the pipe leaves $(i',j')$ must be the same in both cases by our choice of $(i',j')$. Hence, we obtain a contradiction unless the pipes fully coincide, i.e.\ $j_1=j_2$.

Now, the $r(i,j)$th pipe of $O$ ends in same element as the $i$th pipe of $M_O(\al i,j\ar)$. Consider the $\prec$-maximal element $(i',j')$ contained in both of these pipes. Similarly to the above we can't have $(i',j')\prec (i,j)$ because such a $(i',j')$ lies in $M_O(\al i,j\ar)$ if and only if it lies in $O$. Hence $(i,j)$ lies in both pipes. This implies $(i,j)\preceq(|r(i,j)|,-|r(i,j)|)$ and $|r(i,j)|\ge i$.

(c) This is immediate from the previous two parts.

(d) In the proof of part (b) we have seen that the $r(i,j)$th pipe of $O$ contains $(i,j)$.

(e) In the proof of part (b) we already showed that $(i,j)\preceq(|r(i,j)|,-|r(i,j)|)$ which implies both inequalities.

(f) In this case the $r(i,j)$th pipe of $O$ contains $(i,j')$: this is the first element in the pipe of the form $(i,l)$. Consequently $(i,j')\preceq(|r(i,j)|,-|r(i,j)|)$.
\end{proof}

\subsection{Transformed poset polytopes}

Pipe dreams allow us to define a unimodular transformation of the MCOP which is sometimes more convenient to work with.

\begin{definition}\label{defxi}
For $(i,j)\in P$ let $\epsilon_{i,j}$ denote the basis vector in $\bR^P$ corresponding to $(i,j)$. Let $\xi:\bR^P\to \bR^P$ be the linear map defined on $\epsilon_{i,j}$ as follows. If $j\neq i$, consider the $\lessdot$-maximal $j'\lessdot j$ such that $(i,j')\in O$ and set
\[
\xi(\epsilon_{i,j})=
\begin{cases}
\epsilon_{i,r(i,j)}-\epsilon_{i,r(i,j')}&\text{ if }i\neq j,\\
\epsilon_{i,r(i,i)}&\text{ if }i=j.
\end{cases}
\]
\end{definition}

Obviously, one may order the $\epsilon_{i,j}$ so that $\xi(\epsilon_{i,j})$ is equal to $\epsilon_{i,r(i,j)}$ plus a linear combination of (zero or one) subsequent vectors. This shows that the matrix of $\xi$ is the product of a unitriangular matrix and a permutation matrix, hence $\xi$ is unimodular. 
We denote $\Pi_O(\la)=\xi(\cQ_O(\la))$, this is a unimodular transform of the MCOP. The following proposition describes the lattice points of $\Pi_O(\om_k)$. 

\begin{proposition}\label{pointimage}
For $J\in\cJ_k$ the coordinate $\xi(\one_{M_O(J)})_{i,j}$ is 1 if $j=w^{O,J}(i)$ and $i\in[1,k]$ and 0 otherwise: \[\xi(\one_{M_O(J)})=\one_{\left\{\left(1,w^{O,J}(1)\right),\dots,\left(k,w^{O,J}(k)\right)\right\}}.\]
\end{proposition}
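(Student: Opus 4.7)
The plan is to exploit row-preservation of $\xi$ and reduce the statement to a telescoping identity in each row separately.

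First, I would observe that by Definition~\ref{defxi}, every vector $\xi(\epsilon_{i,j})$ is supported on basis vectors $\epsilon_{i,*}$ with the same first coordinate $i$. So the computation splits row-by-row. For each $i$, let $S_i = \{j : (i,j) \in M_O(J)\}$; the goal becomes showing $\sum_{j \in S_i} \xi(\epsilon_{i,j}) = \epsilon_{i, w^{O,J}(i)}$ when $i \le k$, and $0$ when $i > k$.

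Next I would dispose of the easy rows. For $i > k$ we have $(i,i) \notin J$ (since $(i,i) \succeq (k+1,k+1)$ would force $(k+1,k+1) \in J$), and because $(i,i) \preceq (i,j)$ for every $(i,j) \in P$, the ideal $J$ contains no element in row $i$, hence $S_i = \emptyset$ and the sum is $0$. For $i \in [1,k]$, on the other hand, $(i,i)$ lies in $A \cap J \subset M_O(J)$, so $S_i$ is nonempty; list $S_i = \{j_0 \lessdot j_1 \lessdot \dots \lessdot j_m\}$ with $j_0 = i$ (the $\lessdot$-minimum of $\{j : (i,j) \in P\}$) and $j_m = \tilde j_i$, the $\lessdot$-maximal $j$ with $(i,j) \in M_O(J)$.

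The heart of the proof is a combinatorial claim: for each $t \ge 1$, the $\lessdot$-maximal $j' \lessdot j_t$ with $(i,j') \in O$ equals $j_{t-1}$. This has two parts. \textbf{(i)} $(i, j_{t-1}) \in O$: if instead $(i,j_{t-1}) \in \max_\prec(J) \setminus (J \cap O)$, then since $j_t \gdot j_{t-1}$ and $(i,j_t) \in J$, the element $(i,j_{t-1})$ is not $\prec$-maximal in $J$, a contradiction. \textbf{(ii)} No $j$ with $j_{t-1} \lessdot j \lessdot j_t$ has $(i,j) \in O$: such a $j$ satisfies $(i,j) \preceq (i,j_t) \in J$, hence $(i,j) \in J \cap O \subset M_O(J)$, contradicting the consecutiveness of $j_{t-1}, j_t$ in $S_i$. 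I expect (i) to be the main delicate point, since it requires distinguishing the two reasons an element can lie in $M_O(J)$.

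Given the claim, the Definition~\ref{defxi} formulas yield $\xi(\epsilon_{i,j_0}) = \epsilon_{i,r(i,j_0)}$ and $\xi(\epsilon_{i,j_t}) = \epsilon_{i,r(i,j_t)} - \epsilon_{i,r(i,j_{t-1})}$ for $t \ge 1$, so the row sum telescopes to $\epsilon_{i,r(i,j_m)}$. Finally, Proposition~\ref{wrproperties}\ref{wOisr} identifies $r(i, j_m) = r(i, \tilde j_i)$ with $w^{O,J}(i)$, completing the proof.
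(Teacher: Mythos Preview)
Your proof is correct and follows essentially the same approach as the paper: both split by rows, telescope using Definition~\ref{defxi}, and finish with Proposition~\ref{wrproperties}\ref{wOisr}. The paper's version is much terser, simply asserting the telescoping identity ``by the definitions of $M_O(J)$ and $\xi$''; your combinatorial claim (that consecutive elements of $S_i$ are exactly the relevant $O$-predecessors) makes explicit precisely the content the paper leaves to the reader.
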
 
\begin{proof}
By the definitions of $M_O(J)$ and $\xi$ for any $i\in[1,k]$ one has \[\xi\left(\sum_{(i,j)\in M_O(J)}\epsilon_{i,j}\right)=\epsilon_{i,r(i,j')}\] where $j'$ is $\lessdot$-maximal among $j$ with $(i,j)\in M_O(J)$. However, by Proposition~\ref{wrproperties}\ref{wOisr} we have $r(i,j')=w^{O,J}(i)$ and the claim follows.
\end{proof}

\begin{example}\label{xiex}
Consider $n=3$ and $O=\{(1,1), (1,3), (1,-2), (2,2), (2,3), (3,3), (3,-3)\}$ as in Example~\ref{rex} and $J=\langle (2,-3)\rangle\in \cJ_2$. Then 
$\xi(\one_{M_O(J)})=\one_{\{(1,-2),(2,3)\}}$.
\end{example}

\begin{example}\label{xiexfflv}
Note that for $O=A$ one has $w_O=\id$ and $r(i,j)=j$ for any $(i,j)$. Consequently, in this case $\xi(x)_{i,j}=x_{i,j}$ for any $(i,j)\notin A$ while $\xi(x)_{i,i}$ is found from $\sum_j \xi(x)_{i,j}=x_{i,i}=\la(i)$ for $x\in\cQ_O(\la)$. This means that the polytopes $\cQ_A(\la)$ and $\Pi_A(\la)$ are almost the same: they project into the same polytope in $\bZ^{P\bs A}$ and this projection is unimodularly equivalent to both. For $O=P$ the map $\xi$ is less trivial and the polytope $\Pi_P(\la)$ differs substantially from $\cQ_P(\la)$, see Examples~\ref{psiexgt} and~\ref{basisexgt}.
\end{example}

Note that, in view of Definition~\ref{mcopdefC}, all $\cQ_O(\la)$ with regular $\la$ have the same normal fan. Hence, the same holds for the $\Pi_O(\la)$. We give a multiprojective realization of the toric variety of $\Pi_O(\la)$ with regular $\la$ (which is isomorphic to the toric variety of $\cQ_O(\la)$). 

Consider the product \[\bP_\cJ=\bP(\bC^{\cJ_1})\times\dots\times\bP(\bC^{\cJ_n}).\] Its multihomogeneous coordinate ring is the polynomial ring $\bC[\cJ]$ in variables $X_J$ where $J\in\cJ$ is nonempty. Consider also the polynomial ring $\bC[P]$ in variables $z_{i,j}$ with $(i,j)\in P$. Let $\varphi_O:\bC[\cJ]\to\bC[P]$ be given by \[\varphi_O(X_J)=\prod_{i=1}^{|J\cap A|} z_{i,w^{O,J}(i)}.\]
\begin{theorem}
For regular $\la$ the toric variety of $\Pi_O(\la)$ is isomorphic to the zero set of the ideal $I_O=\ker\varphi_O$ in $\bP_\cJ$.
\end{theorem}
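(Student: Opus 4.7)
The plan is to recognize $\varphi_O$ as the monomial parametrization corresponding to the lattice points of $\Pi_O(\om_1),\dots,\Pi_O(\om_n)$, so that $I_O=\ker\varphi_O$ becomes the natural toric ideal for the multi-projective embedding of the toric variety into $\bP_\cJ$.

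First I would combine the definition of $\varphi_O$ with Proposition~\ref{pointimage}: for every nonempty $J\in\cJ_k$ the image $\varphi_O(X_J)=\prod_{i=1}^k z_{i,w^{O,J}(i)}$ equals the monomial $\prod_{(i,j)\in P} z_{i,j}^{\xi(\one_{M_O(J)})_{i,j}}$. Since $\cQ_O(\om_k)$ is a $0/1$-polytope whose vertex set (and whole lattice point set) is $\{\one_{M_O(J)}\}_{J\in\cJ_k}$, and since $\xi$ is a unimodular bijection of $\bR^P$, the vectors $\xi(\one_{M_O(J)})$ are precisely the vertices and all lattice points of $\Pi_O(\om_k)$. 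Thus $\varphi_O$ is exactly the monomial map whose components are indexed by the lattice points of $\Pi_O(\om_k)$, $k=1,\dots,n$.

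Next I would pass to the multi-graded semigroup algebra $R=\bigoplus_{\la} R_\la$ where $\la=\sum a_k\om_k$ and $R_\la$ has basis $\{t^x:x\in\Pi_O(\la)\cap\bZ^P\}$ with multiplication $t^x\cdot t^y=t^{x+y}$. Applying Lemma~\ref{pointdecomp} together with the Minkowski sum property of Theorem~\ref{minkowskiC}, both transported through the unimodular $\xi$, one obtains that the assignment $X_J\mapsto t^{\xi(\one_{M_O(J)})}$ extends to a well-defined surjection $\bC[\cJ]\twoheadrightarrow R$ of multi-graded rings. The tautological map $R\hookrightarrow\bC[P]$, $t^x\mapsto z^x$, is injective since distinct lattice points give distinct Laurent monomials, and the composition $\bC[\cJ]\to R\to\bC[P]$ is exactly $\varphi_O$. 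Therefore $I_O=\ker\varphi_O$ is precisely the kernel of the presentation $\bC[\cJ]\twoheadrightarrow R$, i.e.\ the toric ideal of $R$.

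Finally, the standard multi-projective toric identification gives $\MultiProj R\cong V(I_O)\subset\bP_\cJ$, and since all regular $\la$ share the same normal fan, this abstract toric variety coincides with the toric variety of $\Pi_O(\la)$ for any single regular $\la$. The main point requiring care is this last identification: one needs that $R$ really is the multi-homogeneous coordinate ring of the toric variety in the embedding determined by the lattice points of the $\Pi_O(\om_k)$. This is the multi-graded generalization of the classical fact that a normal projective toric variety is $\Proj$ of its polytope semigroup algebra, and the necessary multi-graded normality is exactly what Theorem~\ref{minkowskiC} provides. Once that is accepted the argument is a straightforward chain of identifications with no further computation required.
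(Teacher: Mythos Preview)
Your proposal is correct and follows essentially the same approach as the paper: both identify $\varphi_O$ with the monomial map determined by the lattice points of the $\Pi_O(\om_k)$ via Proposition~\ref{pointimage}, invoke the normality/Minkowski property of Theorem~\ref{minkowskiC}, and then appeal to the standard multiprojective realization of a toric variety. The only cosmetic difference is that the paper first treats each factor $\ker\varphi_k$ separately as defining the toric variety of $\Pi_O(\om_k)$ and then combines the factors (citing \cite[Lemma 1.8.3]{FM2}), whereas you build the multigraded semigroup algebra $R$ in one step; these are equivalent packagings of the same argument.
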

\begin{proof}
For $k\in[1,n]$ consider the subring $\bC[\cJ_k]\subset\bC[\cJ]$ generated by $X_J$ with $J\in\cJ_k$. Then the kernel of the restriction $\varphi_k$ of $\varphi_O$ to $\bC[\cJ_k]$ cuts out the toric variety of $\Pi_O(\om_k)$ in $\bP(\bC^{\cJ_k})$. Indeed, by Proposition~\ref{pointimage} the lattice points of $\Pi_O(\om_k)$ are enumerated by $\cJ_k$ and $\varphi_k$ maps $X_J$ to the exponential of the corresponding point. Furthermore, in view of Proposition~\ref{minkowskiC} the polytope $\Pi_O(\om_k)$ is normal, hence the kernel of such a map defines its toric variety.

Now, the toric variety of the Minkowski sum \[\Pi_O(\om_1)+\dots+\Pi_O(\om_n)=\Pi_O((1,\dots,1))\] has a standard multiprojective realization. It is given by the kernel of the map $\varphi_O$ because the latter is obtained by combining the maps $\varphi_k$ corresponding to the summands, see, for instance,~\cite[Lemma 1.8.3]{FM2} for a general statement and proof. 
\end{proof}

\subsection{Gr\"obner and sagbi degenerations}

A \textit{monomial order} on a polynomial ring $\bC[x_s]_{s\in S}$ is a partial order $<$ on the set of monomials with the following two properties.
\begin{itemize}
\item The order is multiplicative: for monomials $M_1,M_2$ and $s\in S$ one has $M_1<M_2$ if and only if $M_1x_s<M_2x_s$. 
\item The order is weak, i.e.\ incomparability is an equivalence relation.
\end{itemize}
Note that every total order is weak and weak orders are precisely the  pullbacks of total orders. Moreover, every monomial order can be obtained by applying a monomial specialization and then comparing the results lexicographically. We will not be using this general fact, see~\cite[Theorem 1.2]{KNN} for a proof and further context (there the term ``monomial preorder'' is used instead). 

For a monomial order $<$ and a polynomial $p\in \bC[x_s]_{s\in S}$ the initial part $\initial_< p$ is equal to the sum of those monomials occurring in $p$ which are maximal with respect to $<$ taken with the same coefficients as in $p$. For any subspace $U\subset\bC[x_s]_{s\in S}$ its initial subspace $\initial_< U$ is the linear span of $\{\initial_< p\}_{p\in U}$. One easily checks that the initial subspace of an ideal is an ideal (the initial ideal) and the initial subspace of a subalgebra is a subalgebra (the initial subalgebra). 

\begin{definition}
For a monomial order $<$ on $\bC[x_s]_{s\in S}$ and a subalgebra $U\subset\bC[x_s]_{s\in S}$ a generating set $\{p_t\}_{t\in T}\subset U$ is called a \textit{sagbi basis} of $U$ if $\{\initial_< p_t\}_{t\in T}$ generates $\initial_< U$.
\end{definition}

Next, consider another polynomial ring $R=\bC[y_t]_{t\in T}$  and a homomorphism $\varphi:R\to \bC[x_s]_{s\in S}$. Let $<$ be a \textbf{total} monomial order on $\bC[x_s]_{s\in S}$ and let $\varphi_<:R\to\bC[x_s]_{s\in S}$ be the homomorphism mapping $y_t$ to $\initial_<\varphi(y_t)$. Consider the pullback of $<$ with respect to $\varphi_<$: set $M_1<^\varphi M_2$ if and only if $\varphi_<(M_1)<\varphi_<(M_2)$. Evidently, $<^\varphi$ is a monomial order. A standard fact relates initial ideals and initial subalgebras (cf.\ e.g.\ \cite[Lemma 1.5.3]{BCCV}):
\begin{proposition}\label{idealsubalg}
If the ideal $\ker\varphi$ is homogeneous and the set $\{\varphi(y_t)\}_{t\in T}$ is a sagbi basis of $\varphi(R)$ with respect to $<$, then $\ker\varphi_<=\initial_{<^\varphi}\ker\varphi$.
\end{proposition}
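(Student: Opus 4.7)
My plan is to prove the two inclusions separately, with the inclusion $\initial_{<^\varphi}\ker\varphi \subseteq \ker\varphi_<$ handled by a direct computation and the reverse inclusion by a Hilbert-function comparison enabled by the sagbi basis hypothesis.

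For the forward inclusion, the crucial observation is that $<$ being total makes each $\varphi_<(y_t) = \initial_< \varphi(y_t)$ a scalar multiple of a single monomial, so for every monomial $M \in R$ one may write $\varphi_<(M) = \lambda_M \mu(M)$ with $\mu(M)$ a monomial of $\bC[x_s]$ and $\lambda_M \in \bC$. By the definition of $<^\varphi$, two monomials of $R$ are $<^\varphi$-equivalent if and only if their $\mu$-images coincide. Given $p = \sum c_M M \in \ker\varphi$, let $\mu^*$ denote the $<$-maximum of $\mu(M)$ over $M$ occurring in $p$. Then $\initial_{<^\varphi} p$ is the partial sum over $M$ with $\mu(M) = \mu^*$, and applying $\varphi_<$ yields $\bigl(\sum_{\mu(M) = \mu^*} c_M \lambda_M\bigr) \mu^*$. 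Expanding $\varphi(M) = \lambda_M \mu(M) + (\text{monomials strictly }<\mu(M))$, one sees that $\mu^*$ can appear in $\varphi(p)$ only from terms with $\mu(M) = \mu^*$, so the parenthesized sum is precisely the coefficient of $\mu^*$ in $\varphi(p) = 0$ and hence vanishes.

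For the reverse inclusion I would compare Hilbert functions with respect to the grading on $R$ for which $\ker\varphi$ is homogeneous (which in the paper's setting also makes $\varphi$ a graded map, so that $\ker\varphi_<$ is homogeneous as well). The sagbi basis hypothesis identifies $\varphi_<(R) \cong R/\ker\varphi_<$ with $\bC[\initial_<\varphi(y_t)]_{t \in T} = \initial_<\varphi(R)$, which has the same graded Hilbert function as $\varphi(R) \cong R/\ker\varphi$ by the standard sagbi flat-family argument for initial subalgebras. Combined with the analogous Gr\"obner fact $\dim(R/\ker\varphi)_n = \dim(R/\initial_{<^\varphi}\ker\varphi)_n$ for initial ideals, this yields equality of Hilbert functions between $R/\ker\varphi_<$ and $R/\initial_{<^\varphi}\ker\varphi$; together with the forward inclusion, this forces equality of the two ideals degree by degree.

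The main obstacle is ensuring Hilbert function preservation for the weak monomial order $<^\varphi$, which is not in general a term order. I would address this by invoking the characterization of monomial orders as iterated monomial-specialization-then-lex (recalled just before the statement), reducing the problem to a composition of weight-filtered degenerations where the Rees algebra construction yields flatness and hence preserves Hilbert functions in the homogeneous setting.
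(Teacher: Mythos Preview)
Your approach matches the paper's: prove $\initial_{<^\varphi}\ker\varphi \subset \ker\varphi_<$ by a direct coefficient computation, then force equality via a graded-dimension count using the sagbi hypothesis. The detour in your final paragraph is unnecessary, however: the paper simply observes that passing to an initial subspace with respect to \emph{any} monomial order (weak or total) preserves the dimension of each homogeneous component, since $\initial_{<^\varphi} U$ is just the associated graded of $U$ for the filtration by $<^\varphi$-equivalence classes---an elementary linear-algebra fact requiring no Rees-algebra or flat-family machinery.
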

\begin{proof}
For $p\in R$ note that if $\varphi_<(\initial_{<^\varphi}p)\neq 0$, then it is a scalar multiple of a monomial and, moreover, $\varphi_<(\initial_{<^\varphi}p)=\initial_< \varphi(p)$. We deduce that $\varphi_<(\initial_{<^\varphi}p)=0$ if $\varphi(p)=0$, i.e.\ $\initial_{<^\varphi}\ker\varphi\subset\ker\varphi_<$. 

In view of the sagbi basis assumption we have $\varphi_<(R)=\initial_<\varphi(R)$. Since $\ker\varphi$ is homogeneous, the algebras $\varphi_<(R)$ and $R/\initial_{<^\varphi}\ker\varphi$ are graded with finite-dimensional components, also the latter surjects onto the former. However, passing to initial subspaces preserves graded dimensions. Thus, the surjection is an isomorphism since \[\grdim\varphi_<(R)=\mathrm{grdim}\initial_<\varphi(R)=\mathrm{grdim}\varphi(R)=\mathrm{grdim}(R/\initial_{<^\varphi}\ker\varphi).\qedhere\]
\end{proof}

The geometric motivation for considering initial ideals and subalgebras is that they provide flat degenerations. The following theorem is essentially classical, for a proof in the setting of partial monomial orders see~\cite[Theorem 3.2, Proposition 3.4]{KNN}.

\begin{theorem}\label{flatfamily}
For every monomial order $<$ on R and ideal $I\subset R$ there exists a flat $\bC[t]$-algebra $\mathcal A$ such that $\mathcal A/\langle t\rangle\simeq R/\initial_< I$ while for any nonzero $c\in\bC$ one has $\mathcal A/\langle t-c\rangle\simeq R/I$. 
\end{theorem}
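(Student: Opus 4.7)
The plan is to realize $\mathcal{A}$ via the classical Rees-algebra / homogenization construction, reducing first from the given monomial order $<$ to an integer weight function adapted to the ideal $I$. Since $R$ is Noetherian, $I$ is finitely generated, so one may choose finitely many $g_1,\dots,g_m\in I$ whose initial forms $\initial_< g_i$ generate $\initial_< I$. Invoking the representation of weak monomial orders as pullbacks (the KNN result cited just before the theorem), one can find a weight $w\colon S\to\bZ_{\ge 0}$ that agrees with $<$ on the finitely many monomial comparisons relevant to the $g_i$, so that $\initial_w g_i=\initial_< g_i$ for each $i$, and consequently $\initial_w I=\initial_< I$.

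Given such $w$, I would introduce the homogenization $\bar{g}_i\in R[t]$ defined, for $g_i=\sum_\alpha c_\alpha x^\alpha$ with $N_i=\max\{w(\alpha):c_\alpha\neq 0\}$, by $\bar{g}_i(x,t)=\sum_\alpha c_\alpha t^{N_i-w(\alpha)}x^\alpha$. Then $\bar{g}_i(x,1)=g_i$ while $\bar{g}_i(x,0)=\initial_w g_i$. Set $\tilde I=\langle\bar{g}_1,\dots,\bar{g}_m\rangle\subset R[t]$ and $\mathcal{A}=R[t]/\tilde I$. Grading $R[t]$ by assigning $t$ weight $1$ and $x_s$ weight $w_s$ makes each $\bar{g}_i$ homogeneous, hence equips $\mathcal{A}$ with a natural $\bZ_{\ge 0}$-grading. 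The required isomorphisms then follow: at $t=0$ the fiber is $R$ modulo $\langle \initial_w g_i\rangle=\initial_< I$; at $t=c\neq 0$ the $\bC$-algebra automorphism $x_s\mapsto c^{w_s}x_s$ converts each $\bar{g}_i(x,c)$ into a scalar multiple of $g_i$, yielding $\mathcal{A}/\langle t-c\rangle\simeq R/I$.

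The main obstacle is flatness over $\bC[t]$. The cleanest route is a Hilbert-function argument: with respect to the grading above, each graded piece $\mathcal{A}_d$ is a finitely generated $\bC[t]$-module, and its specializations at $t=0$ and at any $t=c\neq 0$ are the degree-$d$ components of $R/\initial_< I$ and $R/I$ respectively. These have identical finite dimensions, since passing to initial subspaces preserves graded dimensions (as used in the proof of Proposition~\ref{idealsubalg}). A finitely generated $\bC[t]$-module whose fiber dimension is constant on $\Spec\bC[t]$ must be free, so each $\mathcal{A}_d$ is free over $\bC[t]$, whence $\mathcal{A}$ is flat. Equivalently, one may verify directly that $t$ is a non-zero-divisor on $\mathcal{A}$ by showing $\tilde I$ is $t$-saturated, which reduces to the Gröbner-basis property of $\{g_i\}$ with respect to $w$. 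The delicate point specific to the weak-order setting is that the auxiliary weight $w$ must simultaneously realize every one of the finitely many relevant monomial comparisons---this is exactly what the KNN characterization guarantees, and it is the reason the theorem extends verbatim from total to weak monomial orders.
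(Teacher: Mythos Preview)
The paper does not supply its own proof of this theorem; it simply cites \cite[Theorem 3.2, Proposition 3.4]{KNN}. Your outline follows the standard Rees/homogenization construction, which is indeed how such results are proved, so the overall strategy is right. Two points in your execution need more care, however, and both concern the flatness step.

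First, your Hilbert-function argument is misstated. With the grading you introduce ($\deg t=1$, $\deg x_s=w_s$), multiplication by $t$ raises degree, so each $\mathcal A_d$ is a finite-dimensional $\bC$-vector space, \emph{not} a $\bC[t]$-module; one cannot speak of its fibers over $\Spec\bC[t]$. For that argument to go through one would need an auxiliary grading on $R$ under which $I$ is homogeneous and with respect to which $t$ has degree $0$---but the theorem places no homogeneity hypothesis on $I$. The alternative you mention, showing that $t$ is a non-zero-divisor on $\mathcal A$ (equivalently that $\tilde I=\langle\bar g_1,\dots,\bar g_m\rangle$ is $t$-saturated), is the correct route in general.

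Second, $t$-saturation of $\langle\bar g_1,\dots,\bar g_m\rangle$ is equivalent to $\{g_i\}$ being a Gr\"obner basis with respect to the weight $w$, and this does \emph{not} follow merely from $\initial_w g_i=\initial_< g_i$. Agreement of $w$ with $<$ on the monomials appearing in the $g_i$ is insufficient; one must choose $w$ so that it also agrees with $<$ on the further (still finitely many) monomials arising in the Buchberger reductions of the S-pairs, or else run a separate dimension argument after refining $w$ by $<$. This is precisely the technical content handled in \cite{KNN} for weak orders, so your ``consequently $\initial_w I=\initial_< I$'' is hiding the main step rather than deriving it from the cited characterization.
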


Suppose the ideal $I$ is homogeneous. In geometric terms the above theorem means that we have a flat family over $\mathbb A^1$ for which the fiber over 0 is isomorphic to $\Proj R/\initial_< I$ while all other fibers are isomorphic to $\Proj R/I$. This flat family is known as a \textit{Gr\"obner degeneration} of the latter scheme into the former. In the setting of Proposition~\ref{idealsubalg} we obtain a flat family with fiber over 0 isomorphic to $\Proj(\initial_< \varphi(R))$ and other fibers isomorphic to $\Proj\varphi(R)$, this special case is known as a \textit{sagbi degeneration}. Note that since the order $<$ is total, $\initial_< \varphi(R)$ is generated by a finite set of monomials, i.e.\ it is a toric ring and $\ker\varphi_<$ is a toric ideal. This means that the fiber over 0 is a toric variety and we have a toric degeneration. In these constructions $\Proj$ can be replaced with $\MultiProj$ if $I$ is multihomogeneous with respect to some grading.

\subsection{The type C Pl\"ucker algebra and the Schubert degeneration}

Consider the polynomial ring $S=\bC[X_{i_1,\dots,i_k}]_{k\in[1,n],\{i_1\lessdot\dots\lessdot i_k\}\subset N}$. This ring is the multihomogeneous coordinate ring of the product \[\bP_\rA=\bP(\bC^{2n\choose 1})\times\dots\times\bP(\bC^{2n\choose n}).\] For every variable $X_{i_1,\dots,i_k}$ in $S$ we consider the $k\times k$ determinant \[C_{i_1,\dots,i_k}=|z_{i,j}|_{i=1,\dots,k,j=i_1,\dots,i_k}.\] Let $I_\rA\subset S$ denote the kernel of the homomorphism $\varphi_\rA$ from $S$ to $\bC[z_{i,j}]_{i\in[1,n],j\in N}$ taking $X_{i_1,\dots,i_k}$ to $C_{i_1,\dots,i_k}$. As is well known, the zero set of $I_\rA$ in $\bP_\rA$ is the partial flag variety of $GL(\bC^N)$ of signature $(1,\dots,n)$, hence the subscript. 

Next, for $\{i_1\lessdot\dots\lessdot i_k\}\subset N$ with $k\in[0,n-2]$ consider the linear expression \[L_{i_1,\dots, i_k}=X_{i_1,\dots,i_k,1,-1}+\dots+X_{i_1,\dots,i_k,n,-n}\in S.\] Here we use the standard convention $X_{i_1,\dots,i_k}=(-1)^\sigma X_{i_{\sigma(1)},\dots,i_{\sigma(k)}}$ for a permutation $\sigma\in \mathcal S_k$ (in particular, $X_{i_1,\dots,i_k}=0$ if two subscripts coincide). Let $\mathfrak L$ denote the linear span of all $L_{i_1,\dots, i_k}$. Let $F$ denote the complete flag variety $G/B$ of the group $G=Sp_{2n}(\bC)$ with $B\subset G$ the Borel subgroup.
\begin{theorem}[see, e.g., \cite{DC}]
The zero set of $I_\rA+\langle\mathfrak L\rangle$ in $\bP_\rA$ is isomorphic to $F$.
\end{theorem}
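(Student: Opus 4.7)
The plan is to identify the zero set of $I_\rA$ in $\bP_\rA$ with the partial flag variety of $GL(\bC^N)$ of signature $(1,\dots,n)$, parametrizing chains $V_1\subset\dots\subset V_n\subset\bC^N$ with $\dim V_i=i$, and then to show that adjoining the linear forms in $\mathfrak L$ carves out exactly the locus where every $V_k$ is isotropic for the standard symplectic form $\omega=\sum_{j=1}^n e_j^*\wedge e_{-j}^*$ on $\bC^N$. Since an isotropic partial flag of signature $(1,\dots,n)$ extends uniquely to a complete symplectic flag via $V_{n+i}=V_{n-i}^\perp$, this locus is exactly $F=Sp_{2n}(\bC)/B$, which gives the claim.

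The first identification is the standard Pl\"ucker presentation of a partial flag variety, which I would simply invoke. For the rest the main tool is the contraction map $\iota_\omega\colon\wedge^k\bC^N\to\wedge^{k-2}\bC^N$. For a $k$-dimensional subspace $W$ spanned by $v_1,\dots,v_k$, I would expand $\iota_\omega(v_1\wedge\dots\wedge v_k)$ in two ways. Using the standard basis $\{e_J\}_J$ of $\wedge^{k-2}\bC^N$ indexed by $(k-2)$-subsets of $N$, each summand $e_j^*\wedge e_{-j}^*$ of $\omega$ contributes a term proportional to $X_{J\cup\{j,-j\}}(W)$, so the coefficient of $e_J$ comes out to a signed version of $L_J(W)$. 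Using instead the natural basis of the subspace $\wedge^{k-2}W\subset\wedge^{k-2}\bC^N$ given by the $\binom{k}{2}$ elements $v_1\wedge\dots\hat v_a\dots\hat v_b\dots\wedge v_k$, the standard interior product formula gives coefficients $\pm\omega(v_a,v_b)$. Comparing the two expansions, vanishing of all $L_{i_1,\dots,i_{k-2}}(W)$ becomes equivalent to vanishing of all $\omega(v_a,v_b)$, i.e.\ to isotropy of $W$.

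Applying this to $W=V_k$ for each $k\in[2,n]$ (isotropy of $V_1$ being automatic since $\omega$ is alternating) would show that the set-theoretic zero locus of $I_\rA+\langle\mathfrak L\rangle$ is exactly the variety of isotropic partial flags. For the scheme-theoretic refinement implicit in the theorem I would cite~\cite{DC} for the fact that $I_\rA+\langle\mathfrak L\rangle$ is already the full homogeneous ideal of $F$ under this Pl\"ucker-type embedding. The main obstacle I anticipate is purely bookkeeping: matching the alternating signs emerging from the interior product against the uniformly positive signs in $L_{i_1,\dots,i_k}=\sum_j X_{i_1,\dots,i_k,j,-j}$. With the chosen sign convention for $\omega$ this is a routine (if tedious) verification.
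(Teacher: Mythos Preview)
The paper does not prove this theorem: it is stated with the citation ``see, e.g., \cite{DC}'' and no proof is given. So there is nothing to compare your proposal against.

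That said, your outline is the standard and correct argument. The identification of the zero locus of $I_\rA$ with the type $\rA$ partial flag variety is exactly what the paper asserts just before the theorem. Your use of the contraction $\iota_\omega\colon\wedge^k\bC^N\to\wedge^{k-2}\bC^N$ is the natural way to see that the linear forms $L_{i_1,\dots,i_{k-2}}$ vanish precisely on isotropic $V_k$: the two expansions you describe (in the ambient basis $\{e_J\}$ versus the basis of $\wedge^{k-2}W$) give the desired equivalence. Your deferral to \cite{DC} for the scheme-theoretic statement (i.e.\ that $I_\rA+\langle\mathfrak L\rangle$ is radical, indeed is the full homogeneous ideal) is also appropriate and matches what the paper does. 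The sign bookkeeping you flag is genuine but, as you say, routine: with the ordering $1\lessdot\dots\lessdot n\lessdot -n\lessdot\dots\lessdot -1$ and the alternating convention $X_{i_{\sigma(1)},\dots,i_{\sigma(k)}}=(-1)^\sigma X_{i_1,\dots,i_k}$, the signs from the interior product are absorbed into the reordering of subscripts, leaving the uniformly positive sum in $L_{i_1,\dots,i_k}$.
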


\begin{definition}
A tuple $(i_1,\dots,i_k)$ in $N$ is \textit{admissible} if its elements are pairwise distinct and for every $l\in[1,n]$ the number of elements with $|i_j|\le l$ does not exceed $l$. Let $\Theta$ denote the set of all admissible tuples of the form $(i_1\lessdot\dots\lessdot i_k)$ and $\Theta'$ denote the set of all non-admissible tuples of the same form.
\end{definition}

\begin{theorem}\label{intermediate}
There exists a monomial order $\ll$ on $S$ such that 
\begin{enumerate}[label=(\alph*)]
\item $\initial_\ll \mathfrak L=\spann(X_{i_1,\dots,i_k})_{(i_1,\dots,i_k)\in\Theta'}$ and
\item $\initial_\ll (I_\rA+\langle\mathfrak L\rangle)=I_\rA+\langle X_{i_1,\dots,i_k}\rangle_{(i_1,\dots,i_k)\in\Theta'}$.
\end{enumerate}
\end{theorem}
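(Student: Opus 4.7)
The plan is to use a weight-based partial monomial order that separates admissible from non-admissible Plücker variables, and to verify (a) and (b) by combining a combinatorial analysis of the generators of $\mathfrak{L}$ with a Hilbert series comparison at the end. Assign the weight $w(X_U) = 1$ if $U \in \Theta'$ and $w(X_U) = 0$ if $U \in \Theta$; extend multiplicatively to monomials and declare $M_1 \ll M_2$ iff $w(M_1) < w(M_2)$. This is a weak (partial) monomial order in the paper's sense, and $\initial_\ll p$ picks out the sum of monomials of $p$ of maximal weight.

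For part (a), the combinatorial heart is showing that each $L_T$ with $|T| = k \le n-2$ contains at least one non-admissible term. Setting $c_m(T) = |\{j \in T : |j| \le m\}|$: if $T$ is admissible then the deficit $m - c_m(T)$ starts at $0$ when $m=0$ and is $\ge 2$ when $m = n$ (since $|T| \le n-2$), so the smallest $l$ where the deficit becomes positive yields $c_{l-1}(T) = l-1$ together with $l, -l \notin T$, so that $T \cup \{l, -l\}$ violates admissibility precisely at level $l$. If $T$ is already non-admissible, any $l$ with $l, -l \notin T$ works, and such $l$ exists because $|T| \le n-2$ forces $N \setminus T$ to contain at least two complete pairs. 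This gives the containment $\initial_\ll \mathfrak{L} \subseteq \spann(X_U)_{U \in \Theta'}$, and equality follows by a dimension count: linear independence of the $L_T$ (e.g., by identifying the degree-$(k+2)$ component of $\mathfrak{L}$ with the image of the symplectic contraction $\Lambda^k \bC^{2n} \to \Lambda^{k+2} \bC^{2n}$) yields $\dim \mathfrak{L} = \sum_{k=0}^{n-2} \binom{2n}{k}$, and the telescoping identity $|\Theta| = \sum_k \dim V_{\omega_k} = \sum_k \bigl(\binom{2n}{k} - \binom{2n}{k-2}\bigr)$ gives $|\Theta'| = \sum_{k=0}^{n-2} \binom{2n}{k}$ as well.

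For part (b), weight-multiplicativity of $\ll$ yields $\initial_\ll(f \cdot L_T) = \initial_\ll f \cdot \initial_\ll L_T$, from which part (a) upgrades to $\initial_\ll \langle \mathfrak{L}\rangle = \langle X_U\rangle_{U \in \Theta'}$ as ideals, and so $\langle X_U\rangle_{U \in \Theta'} \subseteq \initial_\ll(I_\rA + \langle \mathfrak{L}\rangle)$. I expect the main obstacle to be the precise comparison of $\initial_\ll(I_\rA + \langle \mathfrak{L}\rangle)$ with $I_\rA + \langle X_U\rangle_{U \in \Theta'}$: both candidate ideals contain $\langle X_U\rangle_{U \in \Theta'}$, and modulo this ideal the question reduces to showing that the image of $\initial_\ll(I_\rA + \langle \mathfrak{L}\rangle)$ in $S^{\mathrm{adm}} := \bC[X_U]_{U \in \Theta}$ coincides with the image $\bar I_\rA$ of $I_\rA$ under the projection $S \to S^{\mathrm{adm}}$ that sends every non-admissible variable to zero.

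I would close the argument by a Hilbert series comparison. By Theorem~\ref{flatfamily}, $S/\initial_\ll(I_\rA + \langle \mathfrak{L}\rangle)$ has the same multigraded Hilbert series as $S/(I_\rA + \langle \mathfrak{L}\rangle)$, the homogeneous coordinate ring of the symplectic flag variety $F$, whose graded dimensions are prescribed by the Weyl character formula. On the other hand $S/(I_\rA + \langle X_U\rangle_{U \in \Theta'}) \cong S^{\mathrm{adm}}/\bar I_\rA$ can be identified, via a standard monomial theory in which products of admissible Plücker variables correspond to the symplectic semistandard tableaux of the appropriate shape, with the coordinate ring of a type $\rA_{2n-1}$ Schubert variety whose Hilbert series agrees degree-by-degree with that of $F$. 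Combined with the already established containment, the resulting equality of Hilbert series forces the desired equality of ideals in part (b) and simultaneously furnishes the otherwise technical inclusion $I_\rA \subseteq \initial_\ll(I_\rA + \langle \mathfrak{L}\rangle)$.
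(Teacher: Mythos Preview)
Your approach has a genuine gap in part (a) that propagates to part (b).

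For (a), with your weight order the containment $\initial_\ll \mathfrak L\subseteq\spann(X_U)_{U\in\Theta'}$ amounts to saying that \emph{every} nonzero element of $\mathfrak L$ contains a non-admissible term, i.e.\ $\mathfrak L\cap\spann(X_U)_{U\in\Theta}=0$. You only check this for the generators $L_T$; a linear combination of several $L_T$ could in principle have all its non-admissible terms cancel. The dimension count $\dim\mathfrak L=|\Theta'|$ does not rule this out: writing $\pi'$ for the projection onto non-admissible variables, one always has $\dim\initial_\ll\mathfrak L=\dim\pi'(\mathfrak L)+\dim(\mathfrak L\cap\spann(X_U)_{U\in\Theta})=\dim\mathfrak L$, so the count is satisfied for any value of $\dim(\mathfrak L\cap\spann(X_U)_{U\in\Theta})$. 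You would need an independent argument that $\pi'|_{\mathfrak L}$ is injective (or surjective), e.g.\ by invoking that the admissible variables are linearly independent in the symplectic Pl\"ucker algebra.

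For (b), even granting (a), your Hilbert series comparison does not conclude. You establish $\langle X_U\rangle_{U\in\Theta'}\subseteq\initial_\ll(I_\rA+\langle\mathfrak L\rangle)$, but equal Hilbert series of $S/\initial_\ll(I_\rA+\langle\mathfrak L\rangle)$ and $S/(I_\rA+\langle X_U\rangle_{U\in\Theta'})$ do not force the ideals to coincide without a containment between them; sharing the sub-ideal $\langle X_U\rangle_{U\in\Theta'}$ is not enough. The crucial missing piece is $I_\rA\subseteq\initial_\ll(I_\rA+\langle\mathfrak L\rangle)$, and with your order this fails to be automatic because $I_\rA$ is not homogeneous for the weight $w$ (Pl\"ucker relations mix admissible and non-admissible variables). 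The paper sidesteps both issues by choosing $\ll$ as the pullback of a lex order on $\bZ^N$ via the torus grading $\wt_\rA$: then $I_\rA$ is $\wt_\rA$-homogeneous so $I_\rA\subseteq\initial_\ll(I_\rA+\langle\mathfrak L\rangle)$ holds trivially, and on linear forms $\ll$ is a \emph{total} order, so (via the citation to~[Ma]) each $L_T$ has a single initial variable and these are pairwise distinct, giving (a) by triangularity rather than by a dimension count.
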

\begin{proof}
Part (a) is essentially proved in~\cite{Ma}, let us introduce the necessary notation. Consider the lattice $\bZ^N$ with basis $\{\zeta_{j}\}_{j\in N}$.
We define a $\bZ^N$-grading on $S$ (which should be thought of as the $GL(\bC^N)$-weight) by setting
\[\wt_\rA(X_{i_1,\dots,i_k})=\zeta_{i_1}+\dots+\zeta_{i_{k}}.\]

We make use of an alternative order on the set $N$: \[-1\lessdot' 1\lessdot'\dots\lessdot'-n\lessdot' n.\]
Now, consider the lexicographic order $\ll$ on $\bZ^N$ such that $\sum_{i\in N} a_i\zeta_i \ll \sum_{i\in N} b_i\zeta_i$
if for some $i \in N$ one has $a_{i}<b_{i}$ and $a_j=b_j$ for all $j\lessdot'i$. We extend this order to monomials in $S$ by setting $M_1\ll M_2$ if and only if $\wt_\rA M_1\ll \wt_\rA M_2$. It is not hard to check that the restriction of this order to the set of variables is inverse to the order defined in \cite[Definition 1.4]{Ma}. Therefore, using \cite[Lemma 1.16]{Ma} we obtain part~(a).

By part (a) we have
\[\initial_\ll (I_\rA+\langle\mathfrak L\rangle)\supset I_\rA+\langle X_{i_1,\dots,i_k}\rangle_{(i_1,\dots,i_k)\in\Theta'}.\] We show that we have a reverse inequality between graded dimensions of the ideals which provides part (b).

Let $T^\rA$ be the set of \textit{standard monomials}
$\prod_{l=1}^m X_{i^l_1,\dots,i_{k_l}^l}$ for which the $m$ tuples $(i^l_1,\dots,i^l_{k_l})$ are the columns of a semistandard Young tableau. This means that $k_l\geq k_{l+1}$ and $i_j^l\ledot' i_j^{l+1}$ for all $l\le m-1$ and $j\le k_{l+1}$ (the elements increase non-strictly within rows) and $i^l_j\lessdot' i^l_{j+1}$ (the elements increase strictly within columns). Let $T^\mathrm{C}\subset T^\rA$ consist of those monomials for which all $(i_1^l,\dots,i_{k_l}^l)$ are admissible (i.e.\ the tableau is \textit{symplectic semistandard}).

It is a classical fact that $T^\rA$ maps to a basis in $S/I_\rA$. This implies that the image of $T^\mathrm{C}$ spans $S/(I_\rA+\langle X_{i_1,\dots,i_k}\rangle_{(i_1,\dots,i_k)\in\Theta'})$. However, by \cite[Theorem 3.1]{DC} the subset $T^\mathrm{C}$ also maps to a basis in $S/(I_\rA+\langle\mathfrak L\rangle)$. We obtain the desired inequality.
\end{proof}

Denote $\bC[\Theta]=\bC[X_{i_1,\dots,i_k}]_{(i_1,\dots,i_k)\in\Theta}$. We define a homomorphism $\rho_1:S\to\bC[\Theta]$ as follows. Theorem~\ref{intermediate}(a) shows that for every $(i_1,\dots,i_k)\in\Theta'$ there exists a unique $R\in\spann(X_{i_1,\dots,i_k})_{(i_1,\dots,i_k)\in\Theta}$ such that $X_{i_1,\dots,i_k}-R\in\mathfrak L$. We set $\rho_1(X_{i_1,\dots,i_k})=R$. For $(i_1,\dots,i_k)\in\Theta$ we set $\rho_1(X_{i_1,\dots,i_k})=X_{i_1,\dots,i_k}$. The kernel of $\rho_1$ is $\langle\mathfrak L\rangle$. 

Next, for $k\in[1,n]$ let $\Theta_k$ denote the set of $k$-element tuples in $\Theta$. Let $I$ denote the image $\rho_1(I_\rA+\langle\mathfrak L\rangle)=\rho_1(I_\rA)$. Then the zero set of $I$ in \[\bP=\bP(\bC^{\Theta_1})\times\dots\times\bP(\bC^{\Theta_n})\] is again the complete flag variety. In other words, $\rho_1$ corresponds to an embedding of $\bP$ into $\bP_\rA$ and the image of this embedding contains the symplectic flag variety. The quotient $\bC[\Theta]/I= S/(I_\rA+\langle\mathcal L\rangle)$ is the \textit{symplectic Pl\"ucker algebra}.

Now, let $\rho_0:S\to\bC[\Theta]$ be the homomorphism taking every $X_{i_1,\dots,i_k}$ with $(i_1,\dots,i_k)\in\Theta$ to itself and all $X_{i_1,\dots,i_k}$ with $(i_1,\dots,i_k)\in\Theta'$ to zero. Let $\widetilde I$ be the image $\rho_0(I_\rA)$. A version of Theorem~\ref{intermediate}(b) is the following.
\begin{cor}\label{tildeinitial}
$\widetilde I$ is an initial ideal of $I$.    
\end{cor}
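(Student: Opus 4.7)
The plan is to exhibit a monomial order on $\bC[\Theta]$ whose initial ideal of $I$ equals $\widetilde{I}$. The natural candidate is the restriction of $\ll$: define $\prec$ on monomials in admissible variables by $M_1\prec M_2$ iff $M_1\ll M_2$. Since a product of admissible variables remains admissible, this restriction is multiplicative; it also inherits totality from $\ll$, so $\prec$ is a monomial order on $\bC[\Theta]$.

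First I would establish $\initial_\prec I\subset \widetilde{I}$. The key observation is that $\rho_1|_{\bC[\Theta]}=\id$ and $\ker\rho_1=\langle\mathfrak{L}\rangle$, so for any $f\in I$, writing $f=\rho_1(g)$ with $g\in I_\rA$ and noting $\rho_1(g-f)=f-f=0$ gives $g-f\in\langle\mathfrak{L}\rangle$, whence $f\in I_\rA+\langle\mathfrak{L}\rangle$. Theorem~\ref{intermediate}(b) then places $\initial_\ll f$ inside $I_\rA+\langle X_t\rangle_{t\in\Theta'}$. Since $f$ is supported entirely on admissible monomials, one has $\initial_\ll f=\initial_\prec f$; applying $\rho_0$, which fixes admissible variables and kills the non-admissible ones, sends the containment to $\initial_\prec f\in\rho_0(I_\rA)=\widetilde{I}$. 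This is really the heart of the argument: recognizing that an arbitrary $f\in I$ can serve as its own lift inside $I_\rA+\langle\mathfrak{L}\rangle$ reduces the nontrivial content to a direct application of Theorem~\ref{intermediate}(b).

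The reverse inclusion follows from a graded dimension count. The maps $\rho_1$ and $\rho_0$ induce graded isomorphisms $\bC[\Theta]/I\cong S/(I_\rA+\langle\mathfrak{L}\rangle)$ and $\bC[\Theta]/\widetilde{I}\cong S/(I_\rA+\langle X_t\rangle_{t\in\Theta'})$. Theorem~\ref{intermediate}(b), combined with the fact that passing to an initial ideal preserves graded dimensions, equates the Hilbert functions of these two quotients, giving $\dim I_d=\dim\widetilde{I}_d$ in every multidegree $d$. Together with the standard identity $\dim(\initial_\prec I)_d=\dim I_d$ and the containment already established, this forces the equality $\initial_\prec I=\widetilde{I}$.
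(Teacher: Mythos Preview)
Your proof is correct and follows essentially the same strategy as the paper's: both restrict $\ll$ to $\bC[\Theta]$, establish one inclusion directly via Theorem~\ref{intermediate}(b), and close the argument with a graded-dimension count. The only substantive difference is the direction of the direct inclusion: the paper shows $\widetilde I\subset\initial_\ll I$ by lifting an element of $\widetilde I$ through $\rho_0$ and then tracking $\rho_1$ of a preimage, whereas you show $\initial_\prec I\subset\widetilde I$ by observing that any $f\in I$ already lies in $I_\rA+\langle\mathfrak L\rangle$ and then pushing forward through $\rho_0$. Your route is slightly cleaner, since it avoids the step of analysing how $\rho_1$ interacts with $\ll$-initial parts.

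One small correction: the order $\ll$ on monomials in $S$ is \emph{not} total---it is the pullback of the total lex order on $\bZ^N$ along $\wt_\rA$, so distinct monomials of the same $\wt_\rA$-degree are incomparable. Consequently $\prec$ is not total either. This does not affect your argument, since the paper's notion of monomial order explicitly allows weak partial orders and all the facts you use (initial ideals, preservation of graded dimensions) hold in that generality; you should simply drop the word ``totality'' from the first paragraph.
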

\begin{proof}
The monomial order $\ll$ defined in the proof of Theorem~\ref{intermediate} can be restricted to $\bC[\Theta]$, we show that $\initial_\ll I=\widetilde I$. We have seen that the set $T^\mathrm{C}$ projects into a basis in both $S/(I_\rA+\al\mathfrak L\ar)=\bC[\Theta]/I$ and $S/(I_\rA+\al X_{i_1,\dots,i_k}\ar_{(i_1,\dots,i_k)\in\Theta'})=\bC[\Theta]/\widetilde I$. Hence, the ideals have the same graded dimensions and it suffices to show that $\widetilde I\subset \initial_\ll I$.

Indeed, the ideals $I_\rA$ and $\widetilde I$ are $\wt_\rA$-homogeneous. For a nonzero $\wt_\rA$-homogeneous $p\in\widetilde I$ we have $p=\rho_0(q)$ for some $\wt_\rA$-homogeneous $q\in I_\rA$ and $q=\initial_\ll q'$ for some $q'\in I_\rA+\al\mathfrak L\ar$. Since $\rho_1$ replaces every variable $X_{i_1,\dots,i_k}\notin\bC[\Theta]$ occurring in $q'$ with a linear combination of $\ll$-smaller variables, we have $\initial_\ll\rho_1(q')=p$.
\end{proof}

We will construct flat degenerations of the flag variety by considering initial ideals of $\widetilde I$ which will then be initial ideals of $I$ by Corollary~\ref{tildeinitial}. The advantage of $\widetilde I$ is that its initial ideals are especially convenient to obtain in the language of sagbi degenerations in view of the following. Let us define a homomorphism $\varphi:\bC[\Theta]\to\bC[P]$. For $(i_1,\dots,i_k)\in\Theta$ let $Z(i_1,\dots,i_k)$ be the matrix with rows $1,\dots,k$ and columns $i_1,\dots,i_k$ such that $Z(i_1,\dots,i_k)_{a,b}=z_{a,b}$ if $|b|\ge a$ and $Z(i_1,\dots,i_k)_{a,b}=0$ otherwise. Set \[\varphi(X_{i_1,\dots,i_k})=D_{i_1,\dots,i_k}=|Z(i_1,\dots,i_k)|.\] In other words, $D_{i_1,\dots,i_k}$ is obtained from $C_{i_1,\dots,i_k}$ by setting all $z_{i,j}$ with $(i,j)\notin P$ to zero. We denote the image $\varphi(\bC[\Theta])\subset\bC[P]$ by $\mathcal{R}$.

\begin{proposition}
$\widetilde I$ is the kernel of $\varphi$.    
\end{proposition}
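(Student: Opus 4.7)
I would prove the two inclusions $\widetilde I\subset\ker\varphi$ and $\ker\varphi\subset\widetilde I$ separately, obtaining the first from a direct factorization and the second by comparing multigraded Hilbert functions.

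For $\widetilde I\subset\ker\varphi$, I would introduce the ring homomorphism $\sigma:\bC[z_{i,j}]_{i\in[1,n],j\in N}\to\bC[P]$ sending $z_{i,j}$ to itself when $(i,j)\in P$ and to $0$ otherwise. For every $k$-tuple $(i_1,\dots,i_k)$ one has $\sigma(C_{i_1,\dots,i_k})=D_{i_1,\dots,i_k}$; when the tuple lies in $\Theta'$ this determinant vanishes because non-admissibility yields some $l$ with $|\{j:|i_j|\le l\}|>l$, and then rows $l+1,\dots,k$ of $Z(i_1,\dots,i_k)$ are zero on more than $l$ columns, producing a rectangular zero block that forces $D_{i_1,\dots,i_k}=0$. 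It follows that $\varphi\circ\rho_0=\sigma\circ\varphi_\rA$ as homomorphisms $S\to\bC[P]$, whence $\varphi(\widetilde I)=\sigma(\varphi_\rA(I_\rA))=0$.

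For $\ker\varphi\subset\widetilde I$, the first inclusion gives a surjective multigraded homomorphism $\bC[\Theta]/\widetilde I\twoheadrightarrow\mathcal R$, and I would show it is an isomorphism by matching Hilbert functions in each multidegree. By Corollary~\ref{tildeinitial}, the Hilbert function of $\bC[\Theta]/\widetilde I$ agrees with that of the symplectic Pl\"ucker algebra $\bC[\Theta]/I$, for which a basis is given by the symplectic semistandard tableaux $T^\rC$ (as recalled in the proof of Theorem~\ref{intermediate}). So the reverse inclusion reduces to showing that the products $\prod_l D_{i^l_1,\dots,i^l_{k_l}}$ obtained by applying $\varphi$ to the elements of $T^\rC$ are linearly independent in $\mathcal R\subset\bC[P]$.

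This last linear independence is the main obstacle. My approach would be a sagbi-style leading-term argument: choose a monomial order on $\bC[P]$ so that each generator $D_{i_1,\dots,i_k}$ has a distinguished leading monomial $\prod_{a=1}^k z_{a,\tau(a)}$ for a canonical admissible matching $\tau:[1,k]\to\{i_1,\dots,i_k\}$ with $|\tau(a)|\ge a$ (such a matching exists precisely because of admissibility, by Hall's theorem), then verify that distinct elements of $T^\rC$ produce distinct total leading monomials. Combinatorially this amounts to recovering a symplectic semistandard tableau from the row-multisets of the canonical matchings of its columns, the type C counterpart of the classical fact that semistandard tableaux index a sagbi basis of the type A Pl\"ucker algebra. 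In view of the paper's broader strategy, I would expect the pipe-dream technology of Section~1.3 to provide a convenient combinatorial description of the canonical matchings $\tau$ that makes this final rigidity step tractable.
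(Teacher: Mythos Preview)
Your first inclusion is fine, and the factorization $\varphi\circ\rho_0=\sigma\circ\varphi_\rA$ is a clean way to see it. The overall strategy for the reverse inclusion (a surjection $\bC[\Theta]/\widetilde I\twoheadrightarrow\cR$ plus matching Hilbert functions) is also sound. However, your approach diverges substantially from the paper's, and the last step you sketch is not the most direct completion.

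The paper proceeds geometrically: it identifies the zero locus of $I_\rA+\langle X_{i_1,\dots,i_k}\rangle_{(i_1,\dots,i_k)\in\Theta'}$ with the type $\rA_{2n-1}$ Schubert variety $\mathcal X_{(-1,\dots,-n)}$ in $F_\rA$ (using the standard description of Schubert ideals for the Borel $B_\rA$ determined by $\lessdot'$), and then computes this same Schubert variety as the closure of $\pi(B_\rA w)$ for $w:e_j\mapsto e_{-j}$. Writing $\pi$ as a composition through the space of $2n\times n$ matrices, the image $\pi_1(B_\rA w)$ is exactly the matrices supported on $P$, so the homogeneous coordinates of $\pi_2\pi_1(B_\rA w)$ are the $D_{i_1,\dots,i_k}$, whence $\ker\varphi$ is the vanishing ideal of the same Schubert variety. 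This argument is short and uses no pipe-dream combinatorics; conceptually this is appropriate, since the proposition is a preliminary to the main results rather than a consequence of them.

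Your route via $T^\rC$ is workable but heavier than necessary. The difficulty you flag (showing distinct symplectic semistandard tableaux yield distinct leading monomials) does not fall out immediately from the pipe-dream machinery, because $T^\rC$ is indexed by $\lessdot'$-semistandard fillings rather than by chains of order ideals. A cleaner completion of your argument bypasses $T^\rC$ entirely: Proposition~\ref{initialD} (for any fixed $O$) shows that $\initial_< D_{\psi(X_J)}=z^{\xi(\one_{M_O(J)})}$, and Lemma~\ref{pointdecomp} shows that the sum $\sum_i\one_{M_O(J_i)}$ determines the chain $J_1\subset\dots\subset J_m$ uniquely. Since $\xi$ is unimodular, the products $\prod_i D_{\psi(X_{J_i})}$ over chains have pairwise distinct leading monomials, giving $|\cQ_O(\la)\cap\bZ^P|=\dim V_\la$ linearly independent elements of $\cR[\la]$. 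Both ingredients are proved independently of the proposition, so there is no circularity. This is essentially the sagbi argument from the proof of Theorem~\ref{degenmainC}, run in reverse; it works, but it front-loads the main combinatorics of the paper into a lemma that the paper itself handles in one paragraph of Schubert geometry.
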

\begin{proof}
Let $F_\rA$ denote the partial flag variety of $GL(\bC^N)$ of signature $(1,\dots,n)$ cut out by the ideal $I_\rA$. The space $\bC^N$ is spanned by $\{e_i\}_{i\in N}$ and every ordering of this basis defines a Borel subgroup in $GL(\bC^N)$ consisting of elements whose matrices in this ordered basis are \textbf{lower} triangular. We consider the Borel subgroup $B_\rA$ given by the ordering $e_{-1},e_1,\dots,e_{-n},e_n$, i.e.\ with respect to $\lessdot'$.

To prove the proposition we consider Schubert varieties in $F_\rA$ with respect to the Borel subgroup $B_\rA$. Recall that such a variety is the orbit closure $\overline{B_\rA x}$ where $x\in F_\rA$ is a $(\bC^*)^N$-fixed point. Such a point is given by a tuple $\mathcal I=(i_1,\dots,i_n)$ in $N$, it is the point $x^{\mathcal I}\in\bP_\rA$ with multihomogeneous coordinates $x^{\mathcal I}_{i_1,\dots,i_k}=1$ and all other coordinates 0. Denote the Schubert variety $\overline{B_\rA x^{\mathcal I}}$ by $\mathcal X_{\mathcal I}$. The vanishing ideal of $\mathcal X_{\mathcal I}$ has a standard description: it is generated by $I_\rA$ together with all $X_{l_1,\dots,l_k}$ such that $l_1\lessdot'\dots\lessdot' l_k$ and for at least one $r\in[1,k]$ we have $l_r\lessdot' i_r$. In particular, the vanishing ideal of $\mathcal X_{(-1,\dots,-n)}$ is $I_\rA+\langle X_{i_1,\dots,i_k}\rangle_{(i_1,\dots,i_k)\in\Theta'}$. Hence, $\mathcal X_{(-1,\dots,-n)}$ is contained in the subspace $\bP\subset\bP_\rA$ where it is cut out by $\widetilde I\subset\bC[\Theta]$. We now use the fact that the vanishing ideal of a Schubert variety can alternatively be characterized as a kernel.

Consider the projection $\pi:GL(\bC^N)\to F_\rA$ mapping $g$ to the flag \[g\spann(e_{1})\subset\dots\subset g\spann(e_{1},\dots,e_{n}).\] Let $M\in\bC^{N\times N}$ be the matrix of $g$ in the basis $\{e_i\}_{i\in N}$, then the multihomogeneous coordinate $\pi(g)_{j_1,\dots,j_k}$ is equal to the minor of $M$ spanned by rows $j_1,\dots,j_k$ and columns $1,\dots,k$. Now consider the element $w\in GL(\bC^N)$ mapping $e_j$ to $e_{-j}$. We have $\pi(w)=x^{(-1,\dots,-n)}$. Consequently, $\overline{\pi(B_\rA w)}=\mathcal X_{(-1,\dots,-n)}$. 

Now, let $Y\subset\bC^{2n\times n}$ denote the set of $2n\times n$ matrices of rank $n$. We can write $\pi=\pi_2\pi_1$ where $\pi_1:GL(\bC^N)\to Y$ forgets columns $-1,\dots,-n$ and $\pi_2:Y\to F_A$ maps a matrix to the flag spanned by its columns. 
However, $\pi_1(B_\rA w)\subset Y$ consists of those $2n\times n$ matrices $M$ for which $M_{j,i}=0$ for $(i,j)\notin P$. Also, for a matrix $M\in Y$ the homogeneous coordinate $\pi_2(M)_{i_1,\dots,i_k}$ is equal to $D_{i_1,\dots,i_k}|_{z_{i,j}=M_{j,i}}$ if $(i_1,\dots,i_k)\in\Theta$ and to $0$ otherwise. Thus, the Schubert variety $\overline{\pi_2\pi_1(B_\rA w)}$ is cut out in $\bP$ by the kernel of $\varphi$.
\end{proof}

\begin{cor}\label{schubert}
The zero set of $\widetilde I$ in $\bP$ is the Schubert variety $\mathcal X_{(-1,\dots,-n)}$, it is isomorphic to $\MultiProj \mathcal{R}$. In particular, $\mathcal X_{(-1,\dots,-n)}$ is a flat degeneration of $F$.
\end{cor}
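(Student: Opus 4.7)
The plan is to assemble the three assertions directly from results already established in the preceding pages, with essentially no new computation required.

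For the first assertion, the identification of the zero set of $\widetilde I$ in $\bP$ with $\mathcal X_{(-1,\dots,-n)}$ is already a by-product of the immediately preceding proposition. Indeed, the proof of that proposition shows, via the factorization $\pi=\pi_2\pi_1$ of the Schubert map, that the Schubert variety $\mathcal X_{(-1,\dots,-n)}$ is cut out inside $\bP\subset\bP_\rA$ by $\ker\varphi$, and the proposition then identifies $\ker\varphi$ with $\widetilde I$. So I would simply cite this and be done with the first claim.

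For the isomorphism with $\MultiProj\mathcal R$, the map $\varphi:\bC[\Theta]\to\bC[P]$ is a homomorphism of multigraded rings (the $k$-th grading counting $k$-element subscripts on the $\bC[\Theta]$ side and translating to total degree of the determinants $D_{i_1,\dots,i_k}$ on the $\bC[P]$ side). Since $\widetilde I=\ker\varphi$ and $\mathcal R=\varphi(\bC[\Theta])$, the induced map $\bC[\Theta]/\widetilde I\to\mathcal R$ is a multigraded isomorphism, and applying $\MultiProj$ gives the desired identification with the zero set of $\widetilde I$ in $\bP$.

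For the flat degeneration assertion, the input is Corollary~\ref{tildeinitial}, which tells us that $\widetilde I$ is an initial ideal of $I$ with respect to the monomial order $\ll$ constructed in the proof of Theorem~\ref{intermediate}. Since $I$ is multihomogeneous for the grading inducing the embedding $F\hookrightarrow\bP$, I would invoke Theorem~\ref{flatfamily} in its $\MultiProj$-variant (as indicated in the final paragraph of the Gr\"obner/sagbi subsection): this yields a flat family over $\mathbb A^1$ whose generic fiber is $\MultiProj(\bC[\Theta]/I)=F$ and whose special fiber is $\MultiProj(\bC[\Theta]/\widetilde I)=\mathcal X_{(-1,\dots,-n)}$. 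There is no genuine obstacle here; the only point that needs a moment of care is to verify that the multihomogeneous variant of Theorem~\ref{flatfamily} applies, i.e.\ that $\ll$ respects (or is compatible with) the multigrading so that the Rees construction producing $\mathcal A$ stays multigraded, but this is automatic since $\ll$ was defined via a $\bZ^N$-grading that refines the multigrading by total degree.
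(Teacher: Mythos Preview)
Your proposal is correct and matches the paper's own (implicit) argument: the corollary carries no proof in the paper precisely because each assertion follows immediately from the preceding proposition, the identification $\mathcal R=\varphi(\bC[\Theta])\simeq\bC[\Theta]/\widetilde I$, and Corollary~\ref{tildeinitial} combined with Theorem~\ref{flatfamily}. Your added remark about $\ll$ being defined via the $\wt_\rA$-grading (which refines the multigrading, since the sum of coordinates of $\wt_\rA(X_{i_1,\dots,i_k})$ recovers $k$) is a reasonable bit of care that the paper leaves to the reader.
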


\subsection{Toric degenerations}

The following key property of combinatorial type C pipe dreams will let us define an isomorphism between $\bC[\cJ]$ and $\bC[\Theta]$ providing the toric degeneration.

\begin{lemma}
For $J\in\cJ_k$ the tuple $(w^{O,J}(1),\dots,w^{O,J}(k))$ is admissible.
\end{lemma}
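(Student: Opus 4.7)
The plan is to observe that both requirements for admissibility follow almost immediately from the properties of $w^{O,J}$ already established.

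First I would note that $w^{O,J} = w_O^{-1} w_{M_O(J)}$ is an element of $\mathcal{S}_N$, so the values $w^{O,J}(1), \dots, w^{O,J}(k)$ are automatically pairwise distinct. This handles the first part of the definition of admissibility.

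For the inequality on the number of elements with $|i_j| \le l$, the key input is Proposition~\ref{wrproperties}\ref{rijgei}, which gives $|w^{O,J}(i)| \ge i$ for every $i \in [1,k]$. From this, whenever $|w^{O,J}(i)| \le l$ we must have $i \le l$, so the indices $i \in [1,k]$ contributing to the count all lie in $[1, \min(k,l)]$. Hence the number of such indices is bounded by $\min(k,l) \le l$, as required.

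I expect no real obstacle here: the work has already been done in setting up pipe dreams and proving Proposition~\ref{wrproperties}. The proof will essentially be a one-paragraph citation of parts (a)--(c) of that proposition.
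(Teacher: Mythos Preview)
Your proposal is correct and matches the paper's own proof, which simply says the lemma is immediate from Proposition~\ref{wrproperties}\ref{rijgei}. Your explicit mention of distinctness (from $w^{O,J}\in\mathcal S_N$) and the counting argument just spell out what the paper leaves implicit; citing part~(c) alone suffices, so your closing remark about parts (a)--(c) slightly overstates what is needed.
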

\begin{proof}
This is immediate from Lemma~\ref{wrproperties}\ref{rijgei}
\end{proof}

The key ingredient of our first main result is the following map.
\begin{definition}
Let $\psi:\bC[\cJ]\to\bC[\Theta]$ be the homomorphism such that for $J\in\cJ_k$: \[\psi(X_J)=X_{w^{O,J}(1),\dots,w^{O,J}(k)}.\]
\end{definition}

\begin{example}
Consider $O$ and $J$ as in Example~\ref{xiex}. We have 
$\psi(X_J)=X_{-2,3}$.
\end{example}
The map $\psi$ encodes a correspondence $J\mapsto (w^{O,J}(1),\dots,w^{O,J}(k))$ between order ideals and admissible tuples. For the first claim in the below theorem ($\psi$ is an isomorphism) we will show that this is, in fact, a bijection between order ideals and \textit{unordered} admissible tuples. By this we mean that for every admissible tuple there is exactly one order ideal corresponding to a permutation of this tuple. 
\begin{theorem}\label{degenmainC}
The map $\psi$ is an isomorphism. Furthermore, the image $\psi(I_O)$ is an initial ideal of $I$. Hence, the toric variety of the polytope $\cQ_O(\la)$ with regular $\la$ is a flat degeneration of $F$.
\end{theorem}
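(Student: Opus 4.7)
The plan is to build on the intermediate Schubert degeneration. By Corollary~\ref{tildeinitial}, $\widetilde I$ is an initial ideal of $I$, and since initial ideals compose (lex-refining two monomial orders yields a single monomial order), it suffices to exhibit $\psi(I_O)$ as an initial ideal of $\widetilde I$. Since $\widetilde I = \ker \varphi$ with $\varphi(X_{i_1,\dots,i_k}) = D_{i_1,\dots,i_k}$, this will be obtained as a sagbi degeneration of $\cR$ via Proposition~\ref{idealsubalg}: I would produce a total monomial order $<$ on $\bC[P]$ such that
\begin{enumerate}[label=(\roman*)]
\item the family $\{D_{i_1,\dots,i_k}\}_{(i_1,\dots,i_k)\in\Theta}$ is a sagbi basis of $\cR$ with respect to $<$, and
\item for every $J\in\cJ_k$ the initial term $\initial_< D_{w^{O,J}(1),\dots,w^{O,J}(k)}$ is a nonzero scalar multiple of the monomial $\varphi_O(X_J) = \prod_{i=1}^{k} z_{i,w^{O,J}(i)}$.
\end{enumerate}
Granting both, the homomorphism $\varphi_<$ of Proposition~\ref{idealsubalg} equals $\varphi_O \circ \psi^{-1}$, so $\ker \varphi_< = \psi(I_O)$; hence $\psi(I_O) = \initial_{<^\varphi}\widetilde I$ is an initial ideal of $\widetilde I$ and, transitively, of $I$. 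Combined with Theorem~\ref{flatfamily} and the multiprojective realization of the toric variety of $\Pi_O(\la)$ (which is isomorphic to that of $\cQ_O(\la)$) from the previous subsection, this yields the flat toric degeneration of $F$.

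A preliminary step is to verify that $\psi$ is itself an isomorphism. The lemma above shows $\psi$ lands in $\bC[\Theta]$, and since $|\cJ_k| = \dim V_{\om_k} = |\Theta_k|$, injectivity suffices; I would prove that $J \mapsto \{w^{O,J}(1),\dots,w^{O,J}(k)\}$ is injective on $\cJ_k$ by a recursive reconstruction of $J$ from this unordered tuple. Using the pipe-dream interpretation and Proposition~\ref{wrproperties}, one reads off a $\prec$-maximal element of $J$ by locating the appropriate terminal pipe in $M_O(J)$, and then iterates on the smaller order ideal.

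The heart of the proof, and the main anticipated obstacle, is the construction of the monomial order $<$ together with the verification of (i) and (ii). I would take $<$ to be the lex-refinement of a weight function $\eta_P:P\to\bR_{>0}$ tailored to the pipe-dream combinatorics of $O$, so that in the Leibniz expansion of $D_{w^{O,J}(1),\dots,w^{O,J}(k)}$ the permutation realizing the matching $i \mapsto w^{O,J}(i)$ is the unique $\eta_P$-maximal configuration; Proposition~\ref{wrproperties}\ref{rbound1}--\ref{rbound2} should supply the inequalities needed to rule out competing matchings. For the sagbi basis property (i), I would pursue a standard monomial theory route: show that the initial monomials of the products $\prod_l D_{i_1^l,\dots,i_{k_l}^l}$ indexed by symplectic semistandard Young tableaux are pairwise distinct (each being a product of the pipe-dream monomials above associated with a chain $J_1\subset\dots\subset J_m$ of order ideals as in Lemma~\ref{pointdecomp}), and then compare graded dimensions. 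Matching this count with the dimension of $\cR$ given by the symplectic standard monomial theory used in the proof of Theorem~\ref{intermediate}, and with $|\cQ_O(\la)\cap\bZ^P|$ via Lemma~\ref{pointdecomp} and Theorem~\ref{minkowskiC}, then closes the loop.
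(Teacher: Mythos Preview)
Your proposal is correct and follows essentially the same architecture as the paper: reduce to $\widetilde I$ via Corollary~\ref{tildeinitial}, then realize $\psi(I_O)$ as $\initial_{<^\varphi}\widetilde I$ through a sagbi degeneration of $\cR$ by constructing a total monomial order $<$ on $\bC[P]$ satisfying your conditions (i) and (ii). The paper streamlines two of your sub-steps: rather than a separate recursive reconstruction for injectivity of $\psi$ and a standard-monomial distinctness argument for the sagbi property, it first proves (ii) directly (Proposition~\ref{initialD}, via the explicit lexicographic order of Definition~\ref{monorderC}) and then deduces both claims at once---injectivity of $J\mapsto\{w^{O,J}(i)\}$ follows because the initial monomials $z^{\xi(\one_{M_O(J)})}$ are distinct, and the sagbi basis property follows from the dimension equality $\dim\varphi_O(\bC[\cJ])[\la]=|\cQ_O(\la)\cap\bZ^P|=\dim V_\la=\dim\cR[\la]$.
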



\begin{example}\label{psiexgt}
Consider the case $O=P$. One has $M_P(J)=J$ and one sees that for $J\in\cJ_k$ the tuple $(w_J(1),\dots,w_J(k))$ decreases with respect to $\lessdot$. We also have \[w_P^{-1}(1,\dots,n,-n,\dots,-1)=(-n,n,\dots,-1,1).\] This means that the tuples $(w^{P,J}(1),\dots,w^{P,J}(k))$ are precisely the admissible subsequences of $(1,-1,\dots,n,-n)$. Furthermore, the ideal $I_P$ is the \textit{Hibi ideal} of the distributive lattice $\cJ\bs\{\varnothing\}$: it is generated by binomials of the form $X_{J_1}X_{J_2}-X_{J_1\cap J_2}X_{J_1\cup J_2}$. The fact that the image of such an ideal under $\psi$ is an initial ideal of $I$ can be viewed as a type C analog of the classical result in~\cite{GL}. This toric degeneration was first obtained in~\cite{Ca} (in rather different terms).
\end{example}

\begin{example}\label{psiexfflv}
Consider $O=A$. One has $M_A(J)=\max_\prec J\cup(A\cap J)$ while $w_A=\id$. One may check that the tuples \[(w^{A,J}(1),\dots,w^{A,J}(k))=(w_{M_A(J)}(1),\dots,w_{M_A(J)}(k))\] with $J\in\cJ_k$ are precisely those which have the following form. These tuples are admissible and any element $i\ledot k$ of the tuple must be in position $i$ while the elements $i\gtrdot k$ are arranged in decreasing order with respect to $\lessdot$. In the terminology of~\cite{Ba} these are the tuples forming one-column \textit{symplectic PBW tableaux}. The defining ideal $I_O$ of the toric variety of the type C FFLV polytope has a more complicated explicit description than in the previous example. The fact that the toric variety of the type C FFLV polytope is a flat degeneration of $F$ is due to~\cite{FFFM}, see also~\cite{BallaFang}.
\end{example}

To prove the second claim in Theorem~\ref{degenmainC} we will show that for a certain monomial order $<$ on $\bC[P]$ the algebra $\varphi_O(\bC[\cJ])\simeq\bC[\Theta]/\psi(I_O)$ is equal to $\initial_<\cR$. Since $\cR=\varphi(C[\Theta])$, this will let us apply Proposition~\ref{idealsubalg}. We now define the order $<$.

\begin{definition}\label{monorderC}
We introduce a lexicographic order on $\bZ^P$, to do so we first define a total order on $P$. Every $(i,j)\in P$ can be uniquely expressed as $(i,r(i,j'))$. We set $(i_1,r(i_1,j_1))<(i_2,r(i_2,j_2))$ if $i_1<i_2$ or ($i_1=i_2$ and $j_1\lessdot j_2$) with one exception.
If $i_1=i_2$, $j_1\lessdot j_2$, $(i_1,j_1)\in O$ and there is no $j_1\lessdot j'\ledot j_2$ with $(i_1,j')\in O$, we set $(i_1,r(i_1,j_1))>(i_2,r(i_2,j_2))$. 
Now, for distinct $d,d'\in\bZ^P$ we set $d<d'$ if for the $<$-minimal $(i,j)$ such that $d_{i,j}\neq d'_{i,j}$ we have $d_{i,j}>d'_{i,j}$. We also view $<$ as a total monomial order on $\bC[P]$ by setting $z^d<z^{d'}$ if $d<d'$. 
\end{definition}

One may alternatively describe the total order defined on the set of $(i,j)\in P$ with a chosen $i$ as follows. Let $a_1\lessdot\dots\lessdot a_l$ be those elements for which $(i,a_j)\in O$. First consider the total order $<'$ for which $(i,j_1)<'(i,j_2)$ when $j_1\lessdot j_2$ unless $j_1=a_k$ for some $k$ and $a_k\lessdot j_2\lessdot a_{k+1}$ (if $k=l$, the second inequality is omitted). To define $<$ we reorder the elements via $r$ by setting $(i,r(i,j_1))<(i,r(i,j_2))$ if and only if $(i,j_1)<'(i,j_2)$. For instance, the $<'$-maximal $(i,j)$ is $(i,a_l)$, hence, the $<$-maximal $(i,j)$ is $(i,r(i,a_l))=(i,i)$.

Set $D_{i_1,\dots,i_k}=(-1)^\sigma D_{i_{\sigma(1)},\dots,i_{\sigma(k)}}$ for $\sigma\in \mathcal S_k$ and any admissible tuple $(i_1,\dots,i_k)$.

\begin{proposition}\label{initialD}
For $J\in\cJ_k$ one has \[\initial_< D_{w^{O,J}(1),\dots,w^{O,J}(k)}=z^{\xi\left(\mathbf 1_{M_O(J)}\right)}=\prod_{i=1}^k z_{i,w^{O,J}(i)}.\]    
\end{proposition}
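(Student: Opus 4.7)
The second equality is immediate from Proposition~\ref{pointimage}, so the task is to identify $\prod_{i=1}^k z_{i,w^{O,J}(i)}$ as the $<$-initial monomial of the determinant $D_{w^{O,J}(1),\dots,w^{O,J}(k)}$. I would expand the determinant by the Leibniz formula as $\sum_{\sigma\in\mathcal S_k}\sgn(\sigma)\prod_{a=1}^k z_{a,i_{\sigma(a)}}$, where $i_a=w^{O,J}(a)$ and terms with $|i_{\sigma(a)}|<a$ vanish. Admissibility of the tuple $(i_1,\dots,i_k)$, which comes from Proposition~\ref{wrproperties}\ref{rijgei}, ensures the diagonal term $M_{\id}=\prod_{a=1}^k z_{a,i_a}$ is nonzero; the plan is then to show that every other nonvanishing term $M_\sigma$ satisfies $M_\sigma<M_{\id}$.

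Fix such a $\sigma$ and let $b$ be the smallest index with $\sigma(b)\neq b$, so $c:=\sigma(b)>b$. Since $<$ is lexicographic with smaller rows coming first and since $M_\sigma$ and $M_{\id}$ agree at all positions in rows below $b$, the $<$-minimal coordinate at which the exponent vectors differ lies in row $b$; unwinding the convention $d<d'\Leftrightarrow d_{i,j}>d'_{i,j}$ shows that $M_\sigma<M_{\id}$ reduces to $(b,i_c)<(b,i_b)$. Using that $r(b,\cdot)$ bijects on row $b$ of $P$ (Proposition~\ref{wrproperties}(b)), I would write $i_b=r(b,j^*_b)$ with $j^*_b$ the $\lessdot$-maximum $j$ satisfying $(b,j)\in M_O(J)$ (by Proposition~\ref{wrproperties}\ref{wOisr}) and $i_c=r(b,v)$, so that the remaining inequality becomes $(b,v)<'(b,j^*_b)$. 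Here $(b,v)$ is the unique row-$b$ element of the pipe labeled $i_c$ in the pipe dream of $O$; by Proposition~\ref{wrproperties}(d) this pipe also contains $(c,j^*_c)$.

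The final step uses the block description of $<'$ recalled right after Definition~\ref{monorderC}: the row-$b$ elements $(b,a_1),\dots,(b,a_l)$ of $O$ partition row $b$ into consecutive blocks within which the $O$-element is the $<'$-maximum and the non-$O$-elements are ordered by $\lessdot$. I would split into cases according to whether $(b,j^*_b)\in O$ (so $(b,j^*_b)\in J\cap O$) or $(b,j^*_b)\in\max_\prec J\setminus O$, and in each case aim to show that $v$ either lies in an earlier block than the one of $j^*_b$ or in the same block but $<'$-strictly below $j^*_b$. The main obstacle is the pipewise analysis required here: one must trace the pipe labeled $i_c$ in $O$ from $(c,j^*_c)$ down through the intermediate rows to its row-$b$ entry $(b,v)$, using the precise discrepancy between the pipe dreams of $O$ and $M_O(J)$ (they differ by turns at $O\setminus J$ versus turns at $\max_\prec J\setminus O$) together with the $r$-bounds in Proposition~\ref{wrproperties}(e),(f), so as to preclude $v$ being $<'$-above $j^*_b$. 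I expect the cleanest route will be by induction on $c-b$, controlled at each step by the $O$-structure of the relevant row.
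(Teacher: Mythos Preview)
Your reduction is correct and is exactly the paper's: it suffices, for each row $b$ and each $c\in[b+1,k]$, to show $(b,w^{O,J}(c))<(b,w^{O,J}(b))$, equivalently $(b,v)<'(b,j^*_b)$ after writing $w^{O,J}(b)=r(b,j^*_b)$ and $w^{O,J}(c)=r(b,v)$.

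The gap is in how you locate $(b,v)$. You propose to trace the pipe labelled $i_c$ in the pipe dream of $O$ from $(c,j^*_c)$ down to row $b$, and then run an induction on $c-b$ using the discrepancy between $O$ and $M_O(J)$. Two issues: first, a pipe of $O$ may pass through several row-$b$ elements, so ``the unique row-$b$ element'' is not well-defined (and Proposition~\ref{wrproperties}(d) is only stated for positive labels). More importantly, nothing about the $O$-pipe tells you how $(b,v)$ sits relative to $J$, which is precisely what your case analysis needs.

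The paper sidesteps both problems by tracing the \emph{$c$th pipe of $M_O(J)$} instead. Since $c>b$, this pipe crosses row $b$; let $(b,l')$ be its $\lessdot$-minimal row-$b$ element. One checks directly that $r(b,l')=w^{O,J}(c)$, so $l'=v$. The crucial payoff is that this pipe has already turned before reaching $(b,l')$, and its turns occur only at elements of $M_O(J)\cup A\subset J\cup A$; hence $(b,v)\in J$. With this single fact the case analysis is short and needs no induction. If $v\gtrdot j^*_b$ then maximality of $j^*_b$ in $M_O(J)$ forces $(b,j')\notin O$ for all $j^*_b\lessdot j'\ledot v$, so $(b,v)$ lies in the same $<'$-block as $(b,j^*_b)$ but strictly below its $O$-top. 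If $v\lessdot j^*_b$ and $(b,v)\notin O$ you are done by the block description; if $(b,v)\in O$ then the $M_O(J)$-pipe turns there, so it entered row $b$ at some $(b,l'')\in M_O(J)$ with $l'\lessdot l''\ledot j^*_b$, placing an $O$-element strictly between $v$ and $j^*_b$ and putting $(b,v)$ in an earlier block. Switching from the $O$-pipe to the $M_O(J)$-pipe is the missing idea that makes your outline go through without the inductive machinery.
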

\begin{proof}
It suffices to show for every $i\in[1,k]$ that $z_{i,w^{O,J}(i)}$ is $<$-maximal among variables of the form $z_{i,w^{O,J}(j)}$ with $j\in[i,k]$. Recall that $w^{O,J}(i)=r(i,l)$ for the $\lessdot$-maximal $l$ such that $(i,l)\in M_O(J)$. Suppose that $z_{i,w^{O,J}(j)}>z_{i,w^{O,J}(i)}$ for some $j\in[i+1,k]$. Consider the $j$th pipe of $M_O(J)$, it passes through some $(i,l')$. Consider the $\lessdot$-minimal such $l'$, then $r(i,l')=w^{O,J}(j)$. Note that before passing through $(i,l')$ the $j$th pipe of $M_O(J)$ turns at least once and, since it only turns at elements of $J$, we obtain $(i,l')\in J$. 

If $l'\gtrdot l$, then $(i,j')\notin O$ for all $l\lessdot j'\ledot l'$. That is since all such $(i,j')\in J$ and $(i,j')\in O$ would contradict our choice of $l$. Since $(i,l)\in J\cap O$, we are in the exceptional case in Definition~\ref{monorderC} and obtain $z_{i,r(i,l')}<z_{i,r(i,l)}$ contradicting our assumption. 

Now suppose that $l'\lessdot l$. 
If $(i,l')\notin O$, we again have $z_{i,r(i,l')}<z_{i,r(i,l)}$. 
If $(i,l')\in O$, then the $j$th pipe of $M_O(J)$ turns at $(i,l')$ and prior to that it  turns at some other $(i,l'')$. In other words, there must exist $(i,l'')\in M_O(J)$ with $l''\gtrdot l'$ which is passed by this pipe prior to $(i,l')$. Note that, since the pipe necessarily turns at least once prior to $(i,l'')$, we may not have $(i,l'')\in\max_\prec J$, hence $(i,l'')\in J\cap O$. Consequently, we also have $l''\ledot l$. Hence, $l'\lessdot l''\ledot l$ for some $(i,l'')\in O$ and again $z_{i,r(i,l')}<z_{i,r(i,l)}$.    
\end{proof}

Note that the algebra $\bC[P]$ is graded by the group of integral weights: $\grad z_{i,j}=\varepsilon_i$. In particular, $\grad D_{i_1,\dots,i_k}=\om_k$ and the subalgebra $\cR$ is graded by the semigroup of integral dominant weights. The ring $\bC[\Theta]$ is graded by the same semigroup with $\grad X_{i_1,\dots,i_k}=\om_k$, the ideals $I,\widetilde I$ and the map $\varphi$ are $\grad$-homogeneous. For $U$ a $\grad$-homogeneous subspace or quotient of $\bC[P]$ or $\bC[\Theta]$ and integral weight $\la$ we denote the respective homogeneous component by $U[\la]$. The component $(\bC[\Theta]/I)[\la]$ of the Pl\"ucker algebra is known to have dimension $\dim V_\la$. Proposition~\ref{tildeinitial} the provides $\dim \cR[\la]=\dim V_\la$. 

\begin{proof}[Proof of Theorem~\ref{degenmainC}]
Note that the points $\mathbf 1_{M_O(J)}$ with $J\in\cJ$ are pairwise distinct, hence the $z^{\xi\left(\mathbf 1_{M_O(J)}\right)}$ are pairwise distinct monomials. Proposition~\ref{initialD} then implies that the sets $\{w^{O,J}(1),\dots,w^{O,J}(k)\}$ with $J\in\cJ_k$ are pairwise distinct. Since $|\cJ_k|=|\Theta_k|$, we deduce that every variable $X_{i_1,\dots,i_k}\in\bC[\Theta]$ is equal to $\pm X_{w^{O,J}(1),\dots,w^{O,J}(k)}$ for a unique $J\in\cJ_k$. This provides the first claim.

Now, Proposition~\ref{initialD} also implies that every monomial $z^{\xi\left(\mathbf 1_{M_O(J)}\right)}$ is contained in $\initial_<\cR$, i.e.\ $\varphi_O(\bC[\cJ])\subset\initial_<\cR$. However, \[\dim\varphi_O(\bC[\cJ])[\la]=|\cQ_O(\la)\cap\bZ^P|=\dim V_\la=\dim\cR[\la]\] and we deduce that $\varphi_O(\bC[\cJ])=\initial_<\cR$. Moreover, we see that the determinants $D_{i_1,\dots,i_k}$ with $(i_1,\dots,i_k)\in\Theta$ form a sagbi basis of $\cR$ with respect to $<$. In the notations used in Proposition~\ref{idealsubalg} we have $\varphi_<=\varphi_O\psi^{-1}$ and the proposition provides \[\initial_{<^\varphi}\widetilde I=\ker(\varphi_O\psi^{-1})=\psi(I_O).\]
By Proposition~\ref{tildeinitial} and transitivity $\psi(I_O)$ is also an initial ideal of $I$. 

The last claim is, of course, an application of Theorem~\ref{flatfamily}.
\end{proof}

To conclude this subsection we briefly discuss the connection to standard monomial theories. Lemma~\ref{pointdecomp} shows that the set of products $X_{J_1}\dots X_{J_m}$ with $J_1\subset\dots\subset J_m$ projects to a basis in $\bC[\cJ]/I_O$. Hence, the set of products $\psi(X_{J_1})\dots \psi(X_{J_m})$ with $J_1\subset\dots\subset J_m$ projects to a basis in $\bC[\Theta]/\psi(I_O)$. The fact that $\psi(I_O)$ is an initial ideal of $I$ then provides the following. 
\begin{cor}\label{standmon}
The set of all products $\psi(X_{J_1})\dots \psi(X_{J_m})$ with $J_1\subset\dots\subset J_m$ projects to a basis in the symplectic Pl\"ucker algebra $\bC[\Theta]/I$. 
\end{cor}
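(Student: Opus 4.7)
My plan is to transport the standard monomial basis from the toric side $\bC[\cJ]/I_O$ through the isomorphism $\psi$ and then lift the result across the initial ideal relation $\psi(I_O)=\initial_\prec I$ to the symplectic Pl\"ucker algebra $\bC[\Theta]/I$.

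First, I would use Lemma~\ref{pointdecomp} to verify that the chain products $X_{J_1}\dots X_{J_m}$ form a basis of $\bC[\cJ]/I_O$. In each multihomogeneous component of weight $\la$ the lemma puts chains $J_1\subset\dots\subset J_m$ with prescribed type (namely $a_k$ members lying in $\cJ_k$) in bijection with lattice points of $\cQ_O(\la)$, and under $\varphi_O$ each chain product maps to the monomial exponential of the associated point. Since $\bC[\cJ]/I_O=\varphi_O(\bC[\cJ])$ is the coordinate ring of the normal toric variety of the Minkowski sum $a_1\cQ_O(\om_1)+\dots+a_n\cQ_O(\om_n)$, its $\la$-component has dimension equal to the number of lattice points, so the chain products are both linearly independent and spanning modulo $I_O$. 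Applying the isomorphism $\psi$ from Theorem~\ref{degenmainC} immediately yields that the products $\psi(X_{J_1})\dots\psi(X_{J_m})$ project to a basis of $\bC[\Theta]/\psi(I_O)$.

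Next I would exploit that by the definition of $\psi$ each $\psi(X_J)=\pm X_{i_1,\dots,i_k}$ is a variable in $\bC[\Theta]$ up to sign, so every product $\psi(X_{J_1})\dots\psi(X_{J_m})$ is, up to sign, an honest monomial $M_\alpha$ in $\bC[\Theta]$, where $\alpha$ indexes the chains. By Theorem~\ref{degenmainC} there is a monomial order $\prec$ with $\initial_\prec I=\psi(I_O)$, and Theorem~\ref{flatfamily} supplies a flat family whose fibres have equal graded dimensions, so $\dim(\bC[\Theta]/I)[\la]=\dim(\bC[\Theta]/\psi(I_O))[\la]$. To finish I would show linear independence of $\{M_\alpha\}$ modulo $I$: a nontrivial relation $f=\sum_\alpha c_\alpha M_\alpha\in I$ would force $\initial_\prec f$ to be a nonzero subsum $\sum_{\alpha\in S}c_\alpha M_\alpha$ lying in $\initial_\prec I=\psi(I_O)$, contradicting the independence of the $M_\alpha$ modulo $\psi(I_O)$ established in the previous step. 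Dimension matching then upgrades linear independence to a basis of $\bC[\Theta]/I$.

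The whole argument is essentially a mechanical assembly of facts already in place. The one conceptual point worth flagging is that each $\psi(X_J)$ is a single variable up to sign, which is what makes the chain products genuine monomials in $\bC[\Theta]$ and thereby allows the initial-part argument to conclude; if $\psi$ only intertwined $I_O$ with $\psi(I_O)$ without sending the basis elements to monomials, the lift from $\bC[\Theta]/\psi(I_O)$ to $\bC[\Theta]/I$ would require additional care.
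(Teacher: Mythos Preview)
Your proposal is correct and follows essentially the same approach as the paper: use Lemma~\ref{pointdecomp} to get a basis of $\bC[\cJ]/I_O$, transport through $\psi$, and lift via the initial ideal relation. The paper simply asserts that ``the fact that $\psi(I_O)$ is an initial ideal of $I$ then provides'' the result, whereas you spell out the lifting argument (monomials, $\initial_\prec$ of a putative relation, dimension matching); your added observation that each $\psi(X_J)$ is a single variable up to sign is exactly what makes this standard lifting work and is implicit in the paper's one-line justification.
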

\begin{remark}\label{tableaux}
Bases of the above form are known as \textit{standard monomial theories}. In the case $O=P$ this basis is de Concini's basis parametrized by symplectic semistandard tableaux considered above as $T^\rC$ (modulo swapping $i$ and $-i$ which actually coincides with the conventions in~\cite{DC}). In the case $O=A$ we obtain the basis parametrized by symplectic PBW-semistandard tableaux described in~\cite{Ba}. 

In general, with every monomial $\psi(X_{J_1})\dots \psi(X_{J_m})$ in our basis we can associate a Young tableaux whose $(m+1-i)$th column contains the elements $w^{O,J_i}(1),\dots,w^{O,J_i}(k)$ where $J_i\in\cJ_k$ (we use English notation). We can then declare the resulting tableaux to be (semi)standard so that our basis is enumerated by such tableaux. Of course, in the cases $O=P$ and $O=A$ we recover the aforementioned families of tableaux.
\end{remark}

\subsection{PBW-monomial bases}

For $(i,j)\in P\bs A$ let $f_{i,j}$ denote the root vector in $\fg$ corresponding to the negative root $-\alpha_{i,j}$. Consider the space $V=\bC^N$ with basis $\{e_i\}_{i\in N}$. The Lie algebra $\fgl(V)$ consists of matrices with rows and columns indexed by $N$. Let $E_{i,j}\in\fgl(V)$ denote the matrix with the element in row $i$ and column $j$ equal to 1 and all other elements 0. Recall that the symplectic Lie algebra $\fg$ is standardly identified with a subalgebra of $\fgl(V)$ as follows: 
\[
f_{i,j}=
\begin{cases}
E_{j,i}-E_{-i,-j}&\text{ if }j\in[i+1,n],\\
E_{j,i}+E_{-i,-j}&\text{ if }j\in[-n,-i].
\end{cases}
\]

This realization allows us to view the space $\wedge^k V$ as a $\fg$-representation. We denote elements of the multivector basis in $\wedge^k V$ by $e_{i_1,\dots,i_k}=e_{i_1}\wedge\dots\wedge e_{i_k}$. We identify the fundamental representation $V_{\om_k}$ with the subspace $\cU(\fg)(e_{1,\dots,k})\subset\wedge^kV$ and assume that $v_{\om_k}=e_{1,\dots,k}$. 
We also recall how the matrix $E_{a,b}$ acts on multivectors. If $\{i_1,\dots,i_k\}$ contains $b$ but not $a$, then $E_{a,b}$ maps $e_{i_1,\dots,i_k}$ to $e_{j_1,\dots,j_k}$ where $j_1,\dots,j_k$ is obtained from $i_1,\dots,i_k$ by replacing $b$ with $a$. Otherwise $E_{a,b}e_{i_1,\dots,i_k}=0$. 

For $d\in\bZ_{\ge 0}^P$ we use the notation \[f^d=\prod_{(i,j)\in P\bs A} f_{i,j}^{d_{i,j}}\] where the factors are ordered first by $i$ increasing from left to right and then by $j$ increasing with respect to $\lessdot$ from left to right.

The ring $\bC[P]$ is also graded by $GL(V)$-weights by setting $\wt_\rA(z_{i,j})=\zeta_j-\zeta_i$. 
Recall the order $\ll$ on $\bZ^N$ (page~\pageref{intermediate}).
We say that a monomial order $<'$ on $\bC[P]$ is \textit{$\ll$-compatible} if for any two monomials $M_1$, $M_2$ such that $\wt_\rA(M_1)\ll\wt_\rA(M_2)$ one has $M_1<'M_2$.

\begin{lemma}\label{fundamentalC}
Consider a $\ll$-compatible total monomial order $<'$ on $\bC[P]$ and $k\in[1,n]$. 
\begin{enumerate}[label=(\alph*)]
\item A basis in $V_{\om_k}$ is formed by the vectors $f^d v_{\om_k}$ such that $z^d=\pm\initial_{<'} D_{i_1,\dots,i_k}$ for some $(i_1,\dots,i_k)\in\Theta$.
\item For any $d'\in\bZ_{\ge 0}^P$ the decomposition of $f^{d'}v_{\om_k}$in the above basis only contains vectors $f^d v_{\om_k}$ for which $d'\le' d$.
\end{enumerate}
\end{lemma}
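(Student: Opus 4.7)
The plan is to pair the proposed basis vectors with the natural basis $\{c_I\}_{I\in\Theta_k}$ of $V_{\om_k}^*$, where $c_I\colon V_{\om_k}\to\bC$ extracts the coefficient of the multivector $e_{i_1}\wedge\dots\wedge e_{i_k}$ from the expansion of a vector of $V_{\om_k}\subset\wedge^kV$ in the standard basis. Since distinct admissible tuples yield initial monomials $z^{d_I}=\pm\prod_a z_{a,j_a}$ whose column-indices $\{j_a\}$ recover $I$ as a set, the map $I\mapsto d_I$ is injective and the proposed family has cardinality $|\Theta_k|=\dim V_{\om_k}$. Hence part (a) reduces to showing that the matrix $[c_I(f^{d_J}v_{\om_k})]_{I,J\in\Theta_k}$ is nonsingular; I will argue it is in fact diagonal.

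For the diagonal entry $c_J(f^{d_J}v_{\om_k})$, I expand each factor $f_{a,j_a}=E_{j_a,a}\pm E_{-a,-j_a}$ in the ordered product $f^{d_J}$ and isolate the ``main branch'' in which $E_{j_a,a}$ is chosen at every position; this branch transforms $e_{1,\dots,k}$ into $\pm e_J$ (up to long-root factors of $2$), giving a nonzero contribution. For the off-diagonal vanishing $c_I(f^{d_J}v_{\om_k})=0$ with $I\ne J$, I decompose $f^{d_J}v_{\om_k}$ by the choice of branch at each short-root position $a\in B$: picking $E_{-a,-j_a}$ at a nonempty subset $S$ shifts the $\fgl(V)$-weight of the resulting multivector $e_K$ from $\sum_l\zeta_{j_l}$ by $\sum_{a\in S}(\zeta_a+\zeta_{-a}-\zeta_{j_a}-\zeta_{-j_a})$. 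Processing operators in the prescribed order (largest $i$ first) and tracking the availability of $-j_a$ at each alternative step, one verifies that whenever such an alternative actually fires, the resulting index set $K$ contains the pair $\{a_0,-a_0\}$ for the $\lessdot$-smallest $a_0\in S$; combined with the elements inherited from the identity portion of $J$, this forces $|K\cap\{\pm 1,\dots,\pm a_0\}|>a_0$ and so $K\notin\Theta_k$. Such $K$ therefore contributes to no $c_I$ with $I\in\Theta_k\setminus\{J\}$.

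Part (b) follows by the same triangularity logic applied to an arbitrary $d'\in\bZ_{\ge 0}^P$: expanding $f^{d'}v_{\om_k}$ via the same branching procedure and pairing against $c_I$, one uses $\ll$-compatibility of $<'$ to show that the pairing vanishes unless $d_I\ge' d'$, yielding the required upper-triangular decomposition in the basis from (a). The most delicate step of the whole argument is the combinatorial verification that alternative branches cannot land on admissible index sets in $\Theta_k\setminus\{J\}$: a clean proof likely proceeds by taking $a_0$ the smallest element of $S$, using the admissibility of $J$ to bound $|J\cap\{\pm 1,\dots,\pm(a_0-1)\}|\le a_0-1$, and invoking the availability constraint at $a_0$ together with the specific form of $d_J$ as the exponent of an initial monomial of $D_J$ to force the violation of the admissibility condition at $l=a_0$.
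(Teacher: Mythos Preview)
Your diagonality claim is false, so the argument for part (a) does not go through. A concrete counterexample: take $n=5$, $k=3$, $J=\{4,5,-5\}\in\Theta_3$, and a $\ll$-compatible order $<'$ for which $\initial_{<'}D_{4,5,-5}=z_{1,4}z_{2,5}z_{3,-5}$ (all six terms of this determinant have the same $\wt_\rA$, so such a $<'$ exists). Then
\[
f^{d_J}v_{\om_3}=f_{1,4}f_{2,5}f_{3,-5}(e_1\wedge e_2\wedge e_3)=e_4\wedge e_5\wedge e_{-5}-e_4\wedge e_2\wedge e_{-2},
\]
and $\{2,4,-2\}$ is admissible. So $c_{\{2,4,-2\}}(f^{d_J}v_{\om_3})\neq 0$ while $\{2,4,-2\}\neq J$: the pairing matrix is not diagonal. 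Your heuristic that the set $K$ produced by an alternative branch must satisfy $|K\cap\{\pm 1,\dots,\pm a_0\}|>a_0$ fails here with $a_0=2$: the pair $\{2,-2\}$ lands in $K$ but nothing else of absolute value $\le 2$ does, so the admissibility bound is met with equality. The ``elements inherited from the identity portion of $J$'' you invoke need not be there --- $J$ may have no small elements at all.

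The paper's proof does not attempt diagonality; it proves \emph{triangularity} with respect to the order $\ll$ on multivectors. Concretely, it shows that every multivector $e_K$ occurring in $f^{d_J}v_{\om_k}$ satisfies $\wt_\rA e_K\,\ge\!\!\ge\,\wt_\rA e_J$, with $e_J$ itself occurring with nonzero coefficient. Since distinct $e_J$ are their own $\ll$-minimal terms, the vectors $f^{d_J}v_{\om_k}$ are linearly independent. This is exactly what your ``main branch versus alternative branch'' expansion gives once you observe that the alternative branch replaces $\zeta_{j_a}-\zeta_a$ by $\zeta_{-a}-\zeta_{-j_a}$, which is $\gg$-larger because $a$ is $\lessdot'$-minimal among $\{\pm a,\pm j_a\}$. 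For part~(b) the same weight inequality controls the decomposition of an arbitrary $f^{d'}v_{\om_k}$, and $\ll$-compatibility of $<'$ then yields $d'\le' d_I$ (with a separate short argument when $\wt_\rA z^{d'}=\wt_\rA z^{d_I}$). Your outline for (b) is closer to this, but it leans on (a), which as written is not established.
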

\begin{proof}
\hfill

(a) We have $\initial_{<'}D_{i_1,\dots,i_k}=\pm z_{1,i_{\sigma(1)}}\dots z_{k,i_{\sigma(k)}}$ for some $\sigma\in\mathcal S_k$. Consider the element
\[v_{i_1,\dots,i_k}=f_{1,i_{\sigma(1)}}\dots f_{k,i_{\sigma(k)}}v_{\omega_k}\in V_{\omega_k},\]
where factors of the form $f_{i,i}$ are omitted.
Recall that $V_{\omega_k}\subset \wedge^kV$. 
Let us extend the order $\ll$ to the set of multivectors:
$e_{j_1,\dots,j_k}\ll e_{j'_1,\dots,j'_k}$
if $
\wt_\rA e_{j_1,\dots,j_k}\ll \wt_\rA e_{j'_1,\dots,j'_k}
$ 
where \[\wt_\rA e_{j_1,\dots,j_k}=
\zeta_{j_1}+\dots+\zeta_{j_k}-\zeta_1-\dots-\zeta_k.\]
Then we have 
\[v_{i_1,\dots,i_k}\in c e_{i_{\sigma(1)},\dots,i_{\sigma(k)}}+\bigoplus_{e_{j_1,\dots,j_k}\gg e_{i_{\sigma(1)}, \dots,i_{\sigma(k)}}}\mathbb{C}e_{j_1,\dots, j_k}\]
where $c\neq 0$. Indeed, if $i_{\sigma(j)}\neq -j$ for some $j\in[1,k]$, then $f_{j,i_{\sigma(j)}}=E_{i_{\sigma(j)},j}\pm E_{-j,-i_{\sigma(j)}}$ and if $i_{\sigma(j)}=-j$, then $f_{j,i_{\sigma(j)}}=2E_{i_{\sigma(j)},j}$. Hence, the product $f_{1,i_{\sigma(1)}}\dots f_{k,i_{\sigma(k)}}$ expands into a linear combination of products of the elements $E_{j,i}$. One of these products is $E_{i_{\sigma(1)},1}\dots E_{i_{\sigma(k)},k}$, it occurs with coefficient $c=\pm2^{|\{j|i_{\sigma(j)}=-j\}|}$. All others either act on $v_{\om_k}$ trivially or map it into a multivector $e_{j_1,\dots,j_k}\gg e_{i_{\sigma(1)}, \dots,i_{\sigma(k)}}$. Indeed, applying $E_{i_{\sigma(j)},j}$ to a multivector adds $\delta_1=\zeta_{i_{\sigma(j)}}-\zeta_j$ to its $\wt_\rA$-grading while applying $E_{-j,-i_{\sigma(j)}}$ adds $\delta_2=\zeta_{-j}-\zeta_{-i_{\sigma(j)}}$. However, $\delta_2\gg\delta_1$ since $j$ is $\lessdot'$-minimal among $-j$, $j$, $-i_{\sigma(j)}$ and $i_{\sigma(j)}$.

Hence, the $\ll$-minimal multivectors occurring in the decompositions of the various $v_{i_1,\dots,i_k}$ are pairwise distinct. Thus, these vectors are linearly independent. The number of such vectors coincides with $\dim V_{\omega_k}$ and we obtain part (a).

(b) Similarly, in $f^{d'}$ expand every $f_{i,j}$ with $j\neq-i$ as $E_{j,i}\pm E_{-i,-j}$ and express $f^{d'}v_{\om_k}$ as a linear combination of products of the $E_{i,j}$ applied to $v_{\om_k}$. Each summand is a scalar multiple of a multivector. 
Similarly to part (a) one shows any multivector $e_{j_1,\dots,j_k}$ occurring in this sum satisfies the non-strict inequality 
\begin{equation}\label{weightineq}
\wt_A e_{j_1,\dots,j_k} \ge\!\!\!\ge \wt_\rA z^{d'}=\sum d'_{i,j}(\zeta_j-\zeta_i).    
\end{equation}
We have seen that for $(j_1,\dots,j_k)\in\Theta$ the $\ll$-minimal multivector occurring in $v_{j_1,\dots,j_k}$ is $e_{j_1,\dots,j_k}$. Hence, the decomposition of $f^{d'}v_{\om_k}$ in the basis found in part (a) may not contain $v_{j_1,\dots,j_k}$ with $\wt_\rA e_{j_1,\dots,j_k}\ll \wt_\rA z^{d'}$. Let $v_{j_1,\dots,j_k}$ occur in this decomposition, denote $\initial_{<'}D_{j_1,\dots,j_k}=\pm z^d$, note that $\wt_\rA z^d=\wt_\rA e_{j_1,\dots,j_k}$. The $\ll$-compatibility of the order implies that either $d'<' d$ or $\wt_\rA z^{d'}=\wt_\rA z^d$. Since $e_{j_1,\dots,j_k}$ occurs in  $f^{d'}v_{\om_k}$ with a nonzero coordinate, $\wt_\rA z^{d'}=\wt_\rA z^d$ is only possible if equality holds in~\eqref{weightineq}, i.e.\ $e_{j_1,\dots,j_k}=\pm\prod E_{j,i}^{d'_{i,j}}v_{\omega_k}$. However, since the factors in $\prod E_{j,i}^{d'_{i,j}}$ are ordered by $i$ increasing from left to right, it can only act nontrivially on $v_{\om_k}$ if $z^{d'}$ is a summand in the determinant $D_{j_1,\dots,j_k}$. We obtain $z^{d'}\le'z^d$.
\end{proof}

\begin{lemma}\label{sagbiToMonomials}
Suppose that for a total monomial order $<'$ on $\bC[P]$ the $D_{i_1,\dots,i_k}$ form a sagbi basis of $\mathcal{R}$ with respect to $<'$. Then for an integral dominant weight $\la$ the vectors $f^d v_\la$ with $z^d\in \initial_{<'} R[\la]$ form a basis in $V_\la$.
\end{lemma}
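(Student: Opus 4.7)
The plan is to combine a dimension count with a leading-term argument inside the standard tensor product embedding of $V_\la$.

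For the count, the sagbi hypothesis implies $\dim\initial_{<'}\mathcal R[\la]=\dim\mathcal R[\la]$, and the latter equals $\dim V_\la$ as noted after Corollary~\ref{schubert}. Since $\initial_{<'}\mathcal R[\la]$ has a monomial basis, the set $B=\{z^d\in\initial_{<'}\mathcal R[\la]\}$ has cardinality $\dim V_\la$. It therefore suffices to show that $\{f^d v_\la\}_{z^d\in B}$ is linearly independent in $V_\la$.

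To this end, I will consider the standard embedding $V_\la\hookrightarrow V_{\om_1}^{\otimes a_1}\otimes\dots\otimes V_{\om_n}^{\otimes a_n}$ under which $v_\la$ is sent to $v_{\om_1}^{\otimes a_1}\otimes\dots\otimes v_{\om_n}^{\otimes a_n}$. Writing $m=a_1+\dots+a_n$ and taking $k_1,\dots,k_m$ to be the weakly increasing sequence in which each $k$ appears $a_k$ times, the coproduct expresses
\[
f^d v_\la=\sum_{d=d_{(1)}+\dots+d_{(m)}}c(d_{(\bullet)})\,f^{d_{(1)}}v_{\om_{k_1}}\otimes\dots\otimes f^{d_{(m)}}v_{\om_{k_m}}
\]
with positive multinomial coefficients $c(d_{(\bullet)})$. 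By Lemma~\ref{fundamentalC}(a) applied to each tensor factor, a basis of the ambient tensor product is given by all $f^{e_{(1)}}v_{\om_{k_1}}\otimes\dots\otimes f^{e_{(m)}}v_{\om_{k_m}}$ where each $z^{e_{(j)}}=\pm\initial_{<'}D_{i_1^j,\dots,i_{k_j}^j}$ for some $(i_1^j,\dots,i_{k_j}^j)\in\Theta$. Expanding each factor $f^{d_{(j)}}v_{\om_{k_j}}$ in this basis via Lemma~\ref{fundamentalC}(b) rewrites $f^d v_\la$ in the tensor product basis, and only tuples with $d_{(j)}\le' e_{(j)}$ for all $j$ contribute, with nonzero diagonal coefficient when $d_{(j)}=e_{(j)}$ is already of the required form.

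The key observation is that, by multiplicativity of $<'$, the conditions $d_{(j)}\le' e_{(j)}$ imply $d=\sum d_{(j)}\le'\sum e_{(j)}$, with equality forcing $d_{(j)}=e_{(j)}$ for every $j$. At the same time, the sagbi hypothesis together with the $\grad$-grading (which pins down the multiset of sizes $k_j$ to match $\la$) guarantees that for every $z^d\in B$ there is at least one decomposition $d=e_{(1)}+\dots+e_{(m)}$ with $z^{e_{(j)}}=\pm\initial_{<'}D$; fix any such decomposition and call the resulting tensor basis element the leading term of $f^d v_\la$.

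For linear independence, assume $\sum_{z^d\in B}\alpha_d\,f^d v_\la=0$ and let $d_0$ be $<'$-minimal with $\alpha_{d_0}\ne 0$. Extract the coefficient of the leading term of $f^{d_0}v_\la$: any $d$ contributing to this coefficient must satisfy $d\le'\sum e^0_{(j)}=d_0$ by the preceding inequality, so by minimality only $d=d_0$ contributes; in that case the only surviving inner term has $d_{(j)}=e^0_{(j)}$ for all $j$, giving a nonzero multinomial coefficient times the product of nonzero diagonal coefficients from Lemma~\ref{fundamentalC}(a). This forces $\alpha_{d_0}=0$, a contradiction, and then a dimension count closes the argument. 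The main technical obstacle is verifying that $\sum d_{(j)}=\sum e_{(j)}$ under $d_{(j)}\le' e_{(j)}$ forces componentwise equality; this is the pivot of the leading-term argument, and it follows by a short iterative substitution using the multiplicativity of $<'$.
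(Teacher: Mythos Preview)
Your approach matches the paper's: embed $V_\la$ in the tensor product of fundamentals, control each factor via Lemma~\ref{fundamentalC}, and run a leading-term argument. The pivot you single out (equality in $\sum d_{(j)}\le'\sum e_{(j)}$ under termwise $d_{(j)}\le' e_{(j)}$ forces $d_{(j)}=e_{(j)}$) is exactly the mechanism the paper uses, phrased there via a $\bZ_{\ge0}^P$-grading $\deg$ on $U_\la$ and looking at the component $u_d$.

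There is, however, one genuine gap. Lemma~\ref{fundamentalC} is stated only for $\ll$-compatible total monomial orders, and you invoke parts (a) and (b) for an arbitrary total $<'$. The paper closes this gap at the end of its proof: given a general $<'$, it passes to the order $<'_\rA$ that first compares $\wt_\rA$ via $\ll$ and then breaks ties by $<'$. Each $D_{i_1,\dots,i_k}$ is $\wt_\rA$-homogeneous, so $\initial_{<'}D_{i_1,\dots,i_k}=\initial_{<'_\rA}D_{i_1,\dots,i_k}$; hence the sagbi hypothesis and the set $B$ transfer verbatim to $<'_\rA$, which \emph{is} $\ll$-compatible, and the statements of the lemma for $<'$ and $<'_\rA$ coincide. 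Without inserting this reduction, your appeals to Lemma~\ref{fundamentalC} are not justified.
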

\begin{proof}
Suppose first that $<'$ is $\ll$-compatible. Let $\lambda=(a_1,\dots,a_n)$.
We have a standard embedding
\[V_\lambda \subset \bigotimes_{i=1}^n V_{\omega_i}^{\otimes a_i}=U_\la\] identifying $v_\la$ with $\bigotimes v_{\om_i}^{\otimes a_i}$.
Recall that in Lemma~\ref{fundamentalC}(a) we obtained a basis in $V_{\om_k}$ consisting of vectors $v_{i_1,\dots,i_k}$ with $(i_1,\dots,i_k)\in\Theta_k$. For every $v_{i_1,\dots,i_k}$ we have $\initial_{<'}D_{i_1,\dots,i_k}=\pm z^d$ for some $d\in\bZ^P$. We set $\deg v_{i_1,\dots,i_k}=d$ obtaining a $\bZ_{\ge 0}^P$-grading on every $V_{\om_k}$. We extend these gradings multiplicatively to $U_\la$, for $u\in U_\la$ we denote its $\deg$-homogeneous components by $u_d$, $d\in \bZ_{\ge 0}^P$.

Let us view $<'$ as an order on $\bZ_{\ge 0}^P$. We claim that for $z^d\in\initial_{<'}\cR[\la]$ the vector $u=f^dv_\la\in U_\la$ has $u_d\neq 0$ and $u_c=0$ for all $c\not\ge'd$. This will then provide the linear independence of such vectors and by $\dim\cR[\la]=\dim V_\la$ we obtain a basis. 

Indeed, expand \[u=f^d(v_{\om_1}^{\otimes a_1}\otimes\dots\otimes v_{\om_n}^{\otimes a_n})\] via the Leibniz rule into a sum over all ordered decompositions of $d$ into $a_1+\dots+a_n$ parts. By the sagbi basis assumption we have at least one decomposition \[d=\sum_{k=1}^n\sum_{i=1}^{a_k} d^k_i\] such that $z^{d^k_i}\in\initial_{<'}\cR[\om_k]$ for all $d^k_i$, i.e.\ $z^{d^k_i}$ has the form $\initial_{<'}\pm D_{i_1,\dots,i_k}$. The corresponding summand in our expansion of $u$ will just be a product of the respective $v_{i_1,\dots,i_k}$ and will have $\deg$-grading $d$. We might have multiple such summands if we have multiple decompositions of the above form but they will be pairwise distinct summands in $u_d$. 

Now consider a decomposition $d=\sum\sum d^k_i$ such that $z^{d^k_i}\notin\initial_{<'}\cR[\om_k]$ for at least one $d^k_i$. Then Lemma~\ref{fundamentalC}(b) implies that the corresponding summand \[u'=\bigotimes_{i=1}^{a_1}f^{d^1_i}v_{\om_1}\otimes\dots\otimes\bigotimes_{i=1}^{a_n}f^{d^n_j}v_{\om_n}\] has $u'_{c}\neq 0$ only for $c$ such that $d<'c$. Hence, $u_d\neq 0$ and $u_c=0$ for all $c\not\ge'd$.


Finally, relax the assumption that $<'$ is $\ll$-compatible. Consider the order $<'_\rA$ defined as follows:
$z^d<'_\rA z^{d'}$ if either $\wt_\rA z^d\ll\wt_\rA z^{d'}$ or $\wt_\rA z^d=\wt_\rA z^{d'}$ and $z^d<'z^{d'}$. The order $<'_\rA$ is $\ll$-compatible by construction. However, for $(i_1,\dots,i_k)\in\Theta$ one has \[\initial_{<'}D_{i_1,\dots,i_k}=\initial_{<'_\rA}D_{i_1,\dots,i_k}.\] The sagbi basis property then implies that $\initial_{<'}R[\la]=\initial_{<'_\rA}R[\la]$. Therefore, the claims of the lemma for $<'$ and $<'_\rA$ coincide.
\end{proof}

We now easily obtain the main result of this subsection.
\begin{theorem}\label{mainbasisC}
The vectors $f^x v_\la$ with $x\in \Pi_O(\la)\cap\bZ^P$ form a basis in $V_\la$.
\end{theorem}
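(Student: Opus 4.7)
The plan is to derive this theorem directly from Lemma~\ref{sagbiToMonomials} by reusing the sagbi basis established during the proof of Theorem~\ref{degenmainC}. Specifically, I would take $<' = <$ to be the total monomial order from Definition~\ref{monorderC}. The proof of Theorem~\ref{degenmainC} already verified that the determinants $D_{i_1,\dots,i_k}$ with $(i_1,\dots,i_k) \in \Theta$ form a sagbi basis of $\cR$ with respect to $<$ and that $\initial_< \cR = \varphi_O(\bC[\cJ])$. Hence Lemma~\ref{sagbiToMonomials} immediately yields a basis $\{f^d v_\la : z^d \in \initial_< \cR[\la]\}$ of $V_\la$.

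The remaining task is to identify the index set $\{d \in \bZ_{\ge 0}^P : z^d \in \initial_< \cR[\la]\}$ with $\Pi_O(\la) \cap \bZ^P$. By Proposition~\ref{pointimage}, for $J \in \cJ$ one has $\varphi_O(X_J) = z^{\xi(\one_{M_O(J)})}$. Lemma~\ref{pointdecomp} then expresses each $x \in \cQ_O(\la) \cap \bZ^P$ uniquely as $x = \sum_{s=1}^m \one_{M_O(J_s)}$ for a chain $J_1 \subset \dots \subset J_m$ of the correct shape, so that $z^{\xi(x)} = \prod_s \varphi_O(X_{J_s}) \in \varphi_O(\bC[\cJ])[\la]$. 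Since $\xi$ is unimodular, the map $y \mapsto z^y$ is injective on $\Pi_O(\la) \cap \bZ^P = \xi(\cQ_O(\la) \cap \bZ^P)$, landing in the spanning set of monomials of $\initial_< \cR[\la]$. A dimension count — both $|\Pi_O(\la) \cap \bZ^P|$ and $\dim \initial_< \cR[\la]$ equal $\dim V_\la$ — forces this injection to be a bijection, completing the identification.

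I do not anticipate any real obstacle here: the serious work has already been carried out in the proof of Theorem~\ref{degenmainC} (the sagbi basis property of the $D_{i_1,\dots,i_k}$ with respect to $<$, which is the analytically delicate input coming from Proposition~\ref{initialD} and the dimension count against $\dim V_\la$) and in Lemma~\ref{sagbiToMonomials} (the representation-theoretic step promoting a sagbi basis of $\cR$ to a PBW-monomial basis of $V_\la$, via the tensor product embedding $V_\la \subset \bigotimes V_{\om_i}^{\otimes a_i}$ and the leading-term analysis of Lemma~\ref{fundamentalC}). Theorem~\ref{mainbasisC} is then their straightforward combination with the explicit lattice-point description of $\Pi_O(\la)$ given by Proposition~\ref{pointimage} and Lemma~\ref{pointdecomp}.
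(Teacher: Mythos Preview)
Your proposal is correct and follows essentially the same route as the paper's proof: both apply Lemma~\ref{sagbiToMonomials} with the order $<$ of Definition~\ref{monorderC}, invoke the sagbi basis property established in the proof of Theorem~\ref{degenmainC}, and then identify the monomial basis of $\initial_<\cR[\la]$ with $\{z^y:y\in\Pi_O(\la)\cap\bZ^P\}$. The only cosmetic difference is that the paper cites the Minkowski sum property (Theorem~\ref{minkowskiC}) for the identification step whereas you use Lemma~\ref{pointdecomp} directly, but these are equivalent here.
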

\begin{proof}
Denote $\la=(a_1,\dots,a_n)$. Recall the order $<$ from Definition~\ref{monorderC}. We check that the space $\initial_<\cR[\la]$ has a basis formed by the monomials $z^x$ with $x\in\Pi_O(\la)\cap\bZ^P$. For $\la=\om_k$ this is by Proposition~\ref{initialD}. For general $\la$ the claim follows from two facts. The first is the Minkowski sum property of the  polytopes $\Pi_O(\la)$: Theorem~\ref{minkowskiC} evidently holds when $\cQ_O$ is replaced by $\Pi_O$. The second is that the $D_{i_1,\dots,i_k}$ form a sagbi basis in $\cR$ with respect to $<$, this was shown in the proof of Theorem~\ref{degenmainC}. Together these facts show that (1) the set of $z^x$ with $x\in\Pi_O(\la)\cap\bZ^P$ is the set of all products of $a_1+\dots+a_n$ monomials of which $a_k$ lie in $\initial_<\cR[\om_k]$ and (2) that the set of monomials in $\initial_<R[\la]$ is also described in this way. The theorem now follows directly from Lemma~\ref{sagbiToMonomials}.
\end{proof}

\begin{example}
As mentioned in Example~\ref{xiexfflv}, the polytopes $\Pi_A(\la)$ and $\cQ_A(\la)$ have the same projection to $\bR^{P\bs A}$ and this projection is unimodularly equivalent to both of them. This means that the basis $\{f^xv_\la|x\in\Pi_A(\la)\cap\bZ^P\}$ coincides with $\{f^xv_\la|x\in\cQ_A(\la)\cap\bZ^P\}$ and is the type C FFLV basis constructed in~\cite{FFL2}.
\end{example}

\begin{example}\label{basisexgt}
In view of Proposition~\ref{pointimage} and Example~\ref{psiexgt} the set $\Pi_P(\om_k)\cap\bZ^P$ consists of all points $\one_{\{(1,i_1),\dots,(k,i_k)\}}$ for which $(i_1,\dots,i_k)$ is an admissible subsequence of $(1,-1,\dots,n,-n)$. The set $\Pi_P(\la)\cap\bZ^P$ is then found as the Minkowski sum of the former sets. The authors are not aware of the resulting basis $\{f^xv_\la|x\in\Pi_P(\la)\cap\bZ^P\}$ appearing in the literature although a basis of similar structure is studied in~\cite{MY}.
\end{example}

\subsection{Newton--Okounkov bodies}\label{nosec}

We follow~\cite{KaKh,Ka} associating a Newton--Okounkov body of $F$ with a line bundle $\cL$, a global section $\tau$ of $\cL$ and a valuation $\nu$ on the function field $\bC(F)$. 

\begin{definition}\label{valdef}
For a total group order $<'$ on $\bZ^P$ a $(\bZ^P,<')$-valuation $\nu$ on $\bC(F)$ is a map $\nu:\bC(F)\bs\{0\}\to\bZ^P$ such that for $f,g\in\bC(F)\bs\{0\}$ and $c\in\bC^*$ one has
\begin{enumerate}
\item $\nu(fg)=\nu(f)+\nu(g)$,
\item $\nu(cf)=\nu(f)$,
\item if $f+g\neq0$, then $\nu(f+g)\le'\max_{<'}(\nu(f),\nu(g))$.
\end{enumerate}
\end{definition}
We note that in condition (3) it is, perhaps, more standard to dually require the valuation of the sum to be no less than the minimum. However, these two approaches differ only by reversing the total group order and the above is more convenient to us.

\begin{definition}
For a line bundle $\cL$ on $F$, a nonzero global section $\tau\in H^0(F,\cL)$, a total group order $<'$ on $\bZ^P$ and a $(\bZ^P,<')$-valuation $\nu$ on $\bC(F)$ the corresponding \textit{Newton--Okounkov body} of $F$ is the convex hull closure \[\Delta(\cL,\tau,<',\nu)=\overline{\conv\left\{\left.\frac{\nu(\sigma/\tau^{\otimes m})}m\right|m\in\bZ_{>0},\sigma\in H^0(F,\cL^{\otimes m})\bs\{0\}\right\}}\subset\bR^P.\]    
\end{definition}

We choose an integral dominant $\la=(a_1,\dots,a_n)$ and let $\cL$ be the $G$-equivariant line bundle on the flag variety associated with the weight $\la$. In terms of the embedding $F\subset\bP$ given by the ideal $I$ this bundle is the restriction of $\cO(a_1,\dots,a_n)$ to $F$. Hence, $H^0(F,\cL)$ is naturally isomorphic to $\cR^\rC[\la]$ where $\cR^\rC$ denotes the symplectic Pl\"ucker algebra $\bC[\Theta]/I$. We choose $\tau\in H^0(F_\la,\cL)$ as the image of the monomial $\prod_{k=1}^n X_{1,\dots,k}^{a_k}\in \bC[\Theta][\la]$ in $\cR^\rC$.

We consider the total group order on $\bZ^P$ denoted by $<_A$ in the proof of Lemma~\ref{sagbiToMonomials}: $x<_\rA y$ if either $\wt_\rA z^x\ll\wt_\rA z^y$ or $\wt_\rA z^x=\wt_\rA z^y$ and $x<y$. (Here $z^x\in\bC[z_{i,j}^{\pm1}]_{(i,j)\in P}$ and we consider the natural extension of $\wt_\rA$ to Laurent polynomials.)

To define the valuation $\nu$ we first define a valuation on $\cR^\rC$. Recall the homomorphism $\varphi_<=\varphi_O\psi^{-1}$ from $\bC[\Theta]$ to $\bC[P]$ (cf.\ proof of Theorem~\ref{degenmainC}) mapping the variable $\psi(X_J)$ to $z^{\xi(\one_{M_O(J)})}$.
We have a $(\bZ^P,<_\rA)$-filtration on $\bC[\Theta]$ with component $\bC[\Theta]_x$ for $x\in\bZ^P$ spanned by monomials $M$ with $\varphi_<(M)\le_\rA z^x$ (where we view $<_\rA$ as a monomial order on $\bC[P]$). This induces a filtration on $\cR^\rC$ with components $\cR^\rC_x=\bC[\Theta]_x/(I\cap \bC[\Theta]_x)$.
For an element $p\in\cR^\rC\bs\{0\}$ we set $\nu(p)$ to be the $<_\rA$-minimal $x$ for which $p\in\cR^\rC_x$. Since $\bC(F)$ consists of fractions $p/q$ where $p,q\in \cR[\mu]$ for some $\mu$, we can extend the valuation to $\bC(F)$ by $\nu(p/q)=\nu(p)-\nu(q)$.
\begin{lemma}
The map $\nu$ is a $(\bZ^P,<_\rA)$-valuation on $\bC(F)$.
\end{lemma}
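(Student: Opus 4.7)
The plan is to verify the three valuation axioms via the standard framework where $\nu$ arises from a multiplicative filtration whose associated graded is an integral domain, and then extend routinely to the function field. First, I would check that the filtration $\bC[\Theta]_x$ is multiplicative: $\bC[\Theta]_x\cdot\bC[\Theta]_y\subset\bC[\Theta]_{x+y}$. Since $<_\rA$ is a group order on $\bZ^P$ and $\varphi_<$ is a ring homomorphism, for monomials $M_i$ with $\varphi_<(M_i)\le_\rA z^{x_i}$ one immediately has $\varphi_<(M_1M_2)=\varphi_<(M_1)\varphi_<(M_2)\le_\rA z^{x_1+x_2}$. This descends to a multiplicative filtration on $\cR^\rC$. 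Well-definedness of the minimum in the definition of $\nu(p)$ follows because any lift $M\in\bC[\Theta]$ of $p$ provides the upper bound given by the $<_\rA$-leading exponent of $\varphi_<(M)$, and inside each fixed $\grad$-component the set of attainable $x$ is finite modulo what a smaller lift could achieve.

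The key step is to identify the associated graded $\gr\cR^\rC$. By the proof of Theorem~\ref{degenmainC}, the ideal $\ker\varphi_<=\psi(I_O)$ is an initial ideal of $I$ with respect to the pullback of $<$ via $\varphi_<$; it is also an initial ideal with respect to the pullback of $<_\rA$, since each $D_{i_1,\dots,i_k}$ is $\wt_\rA$-homogeneous and hence $\initial_{<_\rA}D_{i_1,\dots,i_k}=\initial_<D_{i_1,\dots,i_k}$ (as noted in the proof of Lemma~\ref{sagbiToMonomials}). Standard results on filtered rings then yield $\gr\cR^\rC\cong\bC[\Theta]/\psi(I_O)\cong\varphi_O(\bC[\cJ])\subset\bC[P]$, which is a subring of a polynomial ring and therefore an integral domain. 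With a multiplicative filtration and a domain for the associated graded, the three axioms on $\cR^\rC\setminus\{0\}$ follow in the standard way: the ultrametric property is immediate from linearity of the filtration, scale-invariance is obvious, and the equality $\nu(pq)=\nu(p)+\nu(q)$ holds because a strict inequality would produce two nonzero homogeneous elements of $\gr\cR^\rC$ whose product vanishes.

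Finally, I would extend to $\bC(F)$ by $\nu(p/q)=\nu(p)-\nu(q)$ for $p,q\in\cR^\rC[\mu]$. Well-definedness uses the multiplicative axiom already proved: if $p/q=p'/q'$ then $pq'=p'q$ in $\cR^\rC$, and applying $\nu$ yields $\nu(p)+\nu(q')=\nu(p')+\nu(q)$, so $\nu(p)-\nu(q)=\nu(p')-\nu(q')$. Multiplicativity, scale-invariance and the ultrametric inequality on $\bC(F)$ then follow at once from the corresponding properties on $\cR^\rC$ by bringing fractions to a common denominator. The main conceptual obstacle is the identification of $\gr\cR^\rC$ with the subring $\varphi_O(\bC[\cJ])$ of $\bC[P]$, which hinges on the fact, already established for the toric degeneration, that $\psi(I_O)$ is an initial ideal of the symplectic Pl\"ucker ideal $I$; the remaining steps are formal.
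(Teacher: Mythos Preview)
Your approach is essentially the paper's: check multiplicativity of the filtration, identify $\gr\cR^\rC$ with $\bC[\Theta]/\psi(I_O)\cong\varphi_O(\bC[\cJ])\subset\bC[P]$ (hence a domain), and extend to the fraction field. One step deserves tightening. Theorem~\ref{degenmainC} proves $\psi(I_O)=\initial_{<^\varphi}\widetilde I$, not $\initial_{<^\varphi}I$; the passage to $I$ is via $\widetilde I=\initial_\ll I$ and transitivity. What you actually need for $\gr I=\psi(I_O)$ is the equality $\initial_{<_\rA^{\varphi_<}}I=\psi(I_O)$, and the paper obtains this by observing that the pullback order $<_\rA^{\varphi_<}$ on $\bC[\Theta]$ decomposes lexicographically as ``first $\ll$, then $<^\varphi$'' (this is where $\wt_\rA$-homogeneity of the $D_{i_1,\dots,i_k}$ enters), so that
\[
\initial_{<_\rA^{\varphi_<}}I=\initial_{<^\varphi}(\initial_\ll I)=\initial_{<^\varphi}\widetilde I=\psi(I_O).
\]
Your observation $\initial_{<_\rA}D_{i_1,\dots,i_k}=\initial_<D_{i_1,\dots,i_k}$ only yields $\varphi_<=\varphi_{<_\rA}$; it does not by itself give $\initial_{<_\rA^{\varphi_<}}I=\initial_{<^{\varphi_<}}I$, since these are two different pullback orders on $\bC[\Theta]$. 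With that decomposition made explicit, your argument is complete and matches the paper's.
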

\begin{proof}
It suffices to show that properties (1)--(3) from Definition~\ref{valdef} hold for the map $\nu:\cR^\rC\bs\{0\}\to \bZ^P$, since every valuation on an integral domain extends to a valuation on its field of fractions by the given formula. 

For a map $\nu$ obtained from a filtration $(\cR^\rC_x)_{x\in\bZ^P}$ in the above way property (1) is equivalent to the associated graded algebra $\gr\cR^\rC$ being an integral domain. Now, $\gr\cR^\rC$ is isomorphic to $\gr\bC[\Theta]/\gr I$ with respect to the filtration on $\bC[\Theta]$ its restriction to $I$. Here $\gr\bC[\Theta]$ is naturally isomorphic to $\bC[\Theta]$. Under this isomorphism $\gr I\subset\bC[\Theta]$ is seen to be the initial ideal $\initial_{<_\rA^{\varphi_<}} I$ where $<_\rA^{\varphi_<}$ is the pullback of $<_\rA$ to $\bC[\Theta]$ (cf.\ Proposition~\ref{idealsubalg}): for two monomials $M_1<_\rA^{\varphi_<} M_2$ if and only if $\varphi_<(M_1)<_\rA\varphi_<(M_2)$. However, by the definition of $<_\rA$, the order $<_\rA^{\varphi_<}$ amounts to comparing two monomials in $\bC[\Theta]$ first by $\ll$ (in the sense used in Corollary~\ref{tildeinitial}) and then by $<^{\varphi_<}=<^\varphi$. Hence, \[\initial_{<_\rA^{\varphi_<}} I=\initial_{<^\varphi}(\initial_\ll I)=\initial_{<^\varphi}\widetilde I=\psi(I_O)\] as seen in Corollary~\ref{tildeinitial} and Theorem~\ref{degenmainC}. We conclude that $\gr I=\psi(I_O)$ is prime and $\gr\cR^\rC$ is indeed an integral domain.

Finally, property (2) is obvious and property (3) is just the fact that the filtration $(\cR^\rC_x)_{x\in\bZ^P}$ is compatible with the order $<_\rA$.
\end{proof}

For every $k\in[1,n]$ we have a unique $J\in\cJ_k$ such that $w^{O,J}(i)=i$ for $i\in[1,k]$, denote $x_k=\one_{M_O(J)}$. For $\la=(a_1,\dots,a_n)$ denote $x_\la=a_1x_1+\dots+a_nx_n$. Note that $\xi(x_k)=\one_{\{(1,1),\dots,(k,k)\}}$, hence, $\xi(x_\la)_{i,i}=\la(i)$ while other of $\xi(x_\la)$ coordinates are 0.
\begin{theorem}\label{PiNOC}
$\Delta(\cL,\tau,<_\rA,\nu)=\Pi_O(\la)-\xi(x_\la)$.   
\end{theorem}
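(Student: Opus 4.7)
The plan is to reduce the computation to the lattice-point geometry of $\Pi_O(m\la)$ via the sagbi/Gr\"obner framework already developed. The first step is to identify, for each $m\ge 1$, the image
\[\nu(\cR^\rC[m\la]\setminus\{0\})=\Pi_O(m\la)\cap\bZ^P.\]
Under the isomorphism $\gr\cR^\rC\simeq\bC[\Theta]/\psi(I_O)$ established in the proof of the preceding lemma, the weight-$m\la$ component acquires a monomial basis indexed precisely by $\Pi_O(m\la)\cap\bZ^P$: for $m=1$ this is Proposition~\ref{initialD} combined with the sagbi argument in the proof of Theorem~\ref{degenmainC}, and the general case follows by combining this sagbi property with the Minkowski sum property, exactly as in the proof of Theorem~\ref{mainbasisC}. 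Since $\gr\cR^\rC$ is an integral domain, the leading term of any nonzero element of $\cR^\rC[m\la]$ is a scalar multiple of one such monomial, so $\nu$ surjects onto $\Pi_O(m\la)\cap\bZ^P$.

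The second step is to compute $\nu(\tau)=\xi(x_\la)$ directly. The section $\tau$ is the image of $\prod_{k=1}^n X_{1,\dots,k}^{a_k}$, and each factor $X_{1,\dots,k}$ equals $\psi(X_{J_k})$ where $J_k\in\cJ_k$ is the unique order ideal with $w^{O,J_k}(i)=i$ for all $i\in[1,k]$. Applying $\varphi_<=\varphi_O\psi^{-1}$ yields
\[\varphi_<(\tau)=\prod_{k=1}^n z^{a_k\,\xi(\one_{M_O(J_k)})}=z^{\xi(x_\la)},\]
which by construction of the filtration gives $\nu(\tau)=\xi(x_\la)$. Then for any nonzero $\sigma\in\cR^\rC[m\la]$ one has $\nu(\sigma/\tau^{\otimes m})/m=\nu(\sigma)/m-\xi(x_\la)$, and combining with the first step,
\[\Delta(\cL,\tau,<_\rA,\nu)=\overline{\conv}\bigcup_{m\ge 1}\tfrac{1}{m}\bigl(\Pi_O(m\la)\cap\bZ^P\bigr)\ -\ \xi(x_\la).\]

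The third and final step is to identify the closed convex hull on the right with $\Pi_O(\la)$. The inclusion $\subseteq$ is immediate from $\Pi_O(m\la)=m\Pi_O(\la)$. For the reverse inclusion, the Minkowski sum property (Theorem~\ref{minkowskiC}, transported through the unimodular $\xi$) expresses every element of $\tfrac{1}{m}(\Pi_O(m\la)\cap\bZ^P)$ as the arithmetic mean of an $m$-tuple of lattice points of $\Pi_O(\la)$; letting $m\to\infty$, such means become dense in $\conv(\Pi_O(\la)\cap\bZ^P)$. Finally, this convex hull equals $\Pi_O(\la)$ itself, because each $\Pi_O(\om_k)$ is a $0/1$-polytope and hence the convex hull of its lattice points, and the Minkowski sum property propagates the equality to arbitrary dominant $\la$. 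The only genuine obstacle is the bookkeeping in the first step, but the identification of $\gr\cR^\rC$ with the toric algebra in each weight is essentially already done in the proofs of Theorems~\ref{degenmainC} and~\ref{mainbasisC}, so the remaining work is purely formal.
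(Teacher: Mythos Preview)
Your argument is correct and follows the same strategy as the paper: identify $\nu(\cR^\rC[m\la]\setminus\{0\})$ with $\Pi_O(m\la)\cap\bZ^P$, compute $\nu(\tau)$, and read off the body. Two small points where the paper is tighter. First, in your step~2 the phrase ``by construction of the filtration'' hides a step: applying $\varphi_<$ to the lift $\prod_k X_{1,\dots,k}^{a_k}$ only gives $\nu(\tau)\le_\rA\xi(x_\la)$; equality needs that this monomial is not in $\psi(I_O)=\ker\varphi_<$, which is immediate since its $\varphi_<$-image is a nonzero monomial. The paper instead computes $\nu(Y_{1,\dots,k})=\xi(x_k)$ directly from the fact that $I$ is quadratically generated (so every element of $X_{1,\dots,k}+I$ still contains the monomial $X_{1,\dots,k}$) and then uses multiplicativity. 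Second, your step~3 is more elaborate than necessary: since $\Pi_O(m\la)$ is a lattice polytope, $\conv(\Pi_O(m\la)\cap\bZ^P)=\Pi_O(m\la)=m\Pi_O(\la)$, so already for each fixed $m$ the convex hull equals $\Pi_O(\la)-\xi(x_\la)$ and no density or closure argument is needed.
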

\begin{proof}
For $(i_1,\dots,i_k)\in\Theta$ let $Y_{i_1,\dots,i_k}$ denote the image of $X_{i_1,\dots,i_k}$ in $\cR^\rC$. Since the ideal $I$ is quadratically generated, any polynomial in $X_{i_1,\dots,i_k}+I$ contains the monomial $X_{i_1,\dots,i_k}$. Hence, the minimal $\cR^\rC_x$ containing $Y_{i_1,\dots,i_k}$ is the image of the minimal $\bC[\Theta]_x$ containing $X_{i_1,\dots,i_k}$. This provides \[z^{\nu(Y_{i_1,\dots,i_k})}=\varphi_<(X_{i_1,\dots,i_k})=z^{\xi(\one_{M_O(J)})}\] where $J$ is such that $X_J=\pm\psi^{-1}(X_{i_1,\dots,i_k})$. This implies that \[\nu(\cR^\rC[\om_k]\bs\{0\})\supset\nu(\{Y_{i_1,\dots,i_k}\}_{(i_1,\dots,i_k)\in\Theta_k})=\{\xi(\one_{M_O(J)})\}_{J\in\cJ_k}=\Pi_O(\om_k)\cap\bZ^P.\] By property (1) this extends to $\nu(\cR^\rC[\mu]\bs\{0\})\supset\Pi_O(\mu)\cap\bZ^P$ for any integral dominant $\mu$. However, a general property of valuations is that $|\nu(U\bs\{0\})|\le\dim U$ for any finite-dimensional subspace $U$. In view of $\dim\cR^\rC[\mu]=|\Pi_O(\mu)\cap\bZ^P|$ we obtain \[\nu(\cR^\rC[\mu]\bs\{0\})=\Pi_O(\mu)\cap\bZ^P.\]

We also have \[\nu(\tau)=\sum_{k=1}^n a_k\nu(Y_{1,\dots,k})=\sum_{k=1}^n a_k\xi(x_k)=\xi(x_\la).\] Note that $\cL^{\otimes m}$ is the line bundle associated with the weight $m\la$ so that $H^0(F,\cL^{\otimes m})=\cR^\rC[m\la]$ and that the section $\tau^{\otimes m}$ equals $\tau^m\in\cR^\rC[m\la]$. We now see that for any $m\in\bZ_{>0}$ we already have \[\conv\left\{\left.\frac{\nu(\sigma/\tau^{\otimes m})}m\right|\sigma\in H^0(F,\cL^{\otimes m})\bs\{0\}\right\}=\frac{\Pi_O(m\la)-\nu(\tau^m)}m=\Pi_O(\la)-\xi(x_\la).\qedhere\]
\end{proof}

It is now easy to deduce that $\cQ_O(\la)$ is (up to translation) also a Newton--Okounkov body of $F$. Indeed, consider the order $<_\rA^\xi$ on $\bZ[P]$ given by $x <_\rA^\xi y$ if $\xi(x) <_\rA \xi(y)$. For $p\in\bC(F)\bs\{0\}$ set $\nu^\xi(p)=\xi^{-1}(\nu(p))$. Evidently, $\nu^\xi$ is a $(\bZ^P,<_\rA^\xi)$-valuation and the following is a direct consequence of Theorem~\ref{PiNOC}.

\begin{theorem}\label{mainNOC}
$\Delta(\cL,\tau,<_\rA^\xi,\nu^\xi)=\cQ_O(\la)-x_\la$.
\end{theorem}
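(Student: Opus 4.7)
The plan is to deduce Theorem~\ref{mainNOC} directly from Theorem~\ref{PiNOC} by transporting everything across the unimodular map $\xi$. The key observation is that the objects $(<_\rA^\xi, \nu^\xi)$ are defined as the pullbacks of $(<_\rA, \nu)$ along $\xi$, so the whole Newton--Okounkov construction should commute with applying $\xi^{-1}$.

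First I would verify that $\nu^\xi$ is genuinely a $(\bZ^P, <_\rA^\xi)$-valuation on $\bC(F)$. Properties (1) and (2) of Definition~\ref{valdef} are immediate from the corresponding properties of $\nu$ together with the linearity of $\xi^{-1}$. For property (3), if $\nu(f+g) \le_\rA \max_{<_\rA}(\nu(f), \nu(g))$, then applying $\xi^{-1}$ and using that $<_\rA^\xi$ is defined precisely so that $a <_\rA^\xi b \iff \xi(a) <_\rA \xi(b)$, we get $\nu^\xi(f+g) \le_\rA^\xi \max_{<_\rA^\xi}(\nu^\xi(f), \nu^\xi(g))$.

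Next I would use the fact that the convex hull closure construction in the definition of the Newton--Okounkov body commutes with applying a linear map. Concretely, for every $m$ and every nonzero $\sigma \in H^0(F, \cL^{\otimes m})$ we have
\[
\frac{\nu^\xi(\sigma/\tau^{\otimes m})}{m} = \xi^{-1}\!\left(\frac{\nu(\sigma/\tau^{\otimes m})}{m}\right),
\]
so the set whose convex hull closure defines $\Delta(\cL, \tau, <_\rA^\xi, \nu^\xi)$ is the image under $\xi^{-1}$ of the set defining $\Delta(\cL, \tau, <_\rA, \nu)$. Since $\xi^{-1}$ is an invertible linear map, it commutes with $\conv$ and with topological closure, giving
\[
\Delta(\cL, \tau, <_\rA^\xi, \nu^\xi) = \xi^{-1}\bigl(\Delta(\cL, \tau, <_\rA, \nu)\bigr).
\]

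Finally I would plug in Theorem~\ref{PiNOC} and the definition $\Pi_O(\la) = \xi(\cQ_O(\la))$ to compute
\[
\xi^{-1}\bigl(\Pi_O(\la) - \xi(x_\la)\bigr) = \xi^{-1}(\Pi_O(\la)) - x_\la = \cQ_O(\la) - x_\la,
\]
which is exactly the claim. There is no real obstacle here: the entire argument is formal transport of Theorem~\ref{PiNOC} across $\xi$, and the only thing one has to be slightly careful about is making sure that the pullback ordering $<_\rA^\xi$ and pullback valuation $\nu^\xi$ interact correctly with the definition of the Newton--Okounkov body, which is immediate from how both were set up.
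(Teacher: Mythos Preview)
Your proposal is correct and follows exactly the paper's approach: the paper states just before Theorem~\ref{mainNOC} that ``Evidently, $\nu^\xi$ is a $(\bZ^P,<_\rA^\xi)$-valuation and the following is a direct consequence of Theorem~\ref{PiNOC}'', and your write-up simply spells out this direct consequence.
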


\section{Type B}

\subsection{Type B Lie algebras and representations}

In this section we consider the Lie algebra $\fg=\mathfrak{so}_{2n+1}(\bC)$. We use the notations $\fh$, $\alpha_i$, $\om_i$, $(a_1,\dots,a_n)\in\fh^*$, $V_\la$ and $v_\la$ similarly to type C. We also have a basis $\varepsilon_1,\dots,\varepsilon_n$ in $\fh^*$ such that $\alpha_i=\varepsilon_i-\varepsilon_{i+1}$ for $i\le n-1$ and $\alpha_n=\varepsilon_n$. The positive roots are again indexed by pairs of integers $i,j$ such that $i\in[1,n]$ and $j\in[i+1,n]\cup[-n,-i]$, i.e.\ $(i,j)\in P\bs A$. The root $\alpha_{i,j}$ is equal to $\varepsilon_i-\varepsilon_j$ when $j>0$ and to $\varepsilon_i+\varepsilon_j$ when $-i<j<0$ while $\alpha_{i,-i}=\varepsilon_i$. In particular, $\alpha_i=\alpha_{i,i+1}$ for $i\le n-1$, $\alpha_n=\alpha_{n,-n}$ and the roots $\alpha_{i,-i}$ are short. 

In the type B case one has $\om_i=\varepsilon_1+\dots+\varepsilon_i$ for $i\le n-1$ and $\om_n=(\varepsilon_1+\dots+\varepsilon_n)/2$. For a weight $\la=(a_1,\dots,a_n)$ we again denote its coordinates with respect to the basis $\varepsilon_1,\dots,\varepsilon_n$ by $(\la(1),\dots,\la(n))$. Explicitly: $\la(i)=a_i+\dots+a_n/2$.

Let $f_{i,j}$ denote the root vector corresponding to the negative root $-\alpha_{i,j}$. We will make use of the matrix realization of $\fg$. Let $V$ denote a $(2n+1)$-dimensional complex space with a basis enumerated by the set $[-n,n]$. The Lie algebra $\fgl(V)$ consists of matrices with rows and columns enumerated by $[-n,n]$, for $i,j\in[-n,n]$ let $E_{i,j}\in\fgl(V)$ denote the matrix with 1 at position $i,j$ and all other elements 0. Then $\fg$ can be identified with a subalgebra of $\fgl(V)$, one such identification (see~\cite[Section 8.3]{carter}) is given by
\begin{equation}\label{matrixform}
f_{i,j}=
\begin{cases}
E_{j,i}-E_{-i,-j}&\text{ if }|j|>i,\\
E_{0,i}-2E_{-i,0}&\text{ if }j=-i.
\end{cases}
\end{equation}

The above realization equips $V$ with a $\fg$-module structure. For $k\le n-1$ the corresponding fundamental representation is $V_{\om_k}=\wedge^k V$ with highest-weight vector $v_{\om_k}=e_1\wedge\dots\wedge e_k$ while $V_{\om_n}$ is the $2^n$-dimensional spin representation. One also has $V_{2\om_n}=\wedge^n V$ with highest-weight vector $v_{2\om_n}=e_1\wedge\dots\wedge e_n$.

\subsection{Type B poset polytopes}

We define another family of polytopes associated with the poset $(P,\prec)$. Fix a subset $O\subset P$ containing $A$ and not containing any elements from $B=\{(i,-i)\}_{i\in[1,n]}$.

\begin{definition}\label{polytopedefB}
For an integral dominant $\fg$-weight $\la$ the \textit{type B poset polytope} $\cQ^\rB_O(\la)\subset\bR^P$ consists of points $x$ such that:
\begin{itemize}
\item $x_{i,i}=\la(i)$ for all $1\le i\le n$,
\item all $x_{i,j}\ge 0$,
\item for every chain $(p,q)\prec(i_1,j_1)\prec\dots\prec(i_m,j_m)\prec(r,s)$ with $(p,q)\in O$, $(r,s)\in P$ and all $(i_l,j_l)\notin O$ one has \[x_{i_1,j_1}+\dots+x_{i_m,j_m}\le x_{p,q}-cx_{r,s}\]
where $c$ equals $1/2$ if $(r,s)\in B$ and 1 otherwise.
\end{itemize}    
\end{definition}

\begin{example}
In the case $O=P\bs B$ one obtains a Berenstein--Zelevinsky polytope (due to \cite{BZ}) which we refer to as the \textit{type B Gelfand--Tsetlin polytope}. In the case $O=A$ one obtains the polytope studied in~\cite{MtypeB} which we term the \textit{type B FFLV polytope}.
\end{example}

\begin{remark}
Note that Definition~\ref{polytopedefB} also makes sense when $O\cap B$ is nonempty. In fact, the requirement $O\cap B=\varnothing$ is not particularly restrictive because $\cQ^\rB_O(\la)$ does not change when an element of $B$ is added to or removed from $O$. Consequently, although the assumption $O\cap B=\varnothing$ allows for a nicer wording of some of the below results, they can, nonetheless, be generalized to the case of arbitrary $O\supset A$ using this observation.
\end{remark}

One sees that if $\la=(a_1,\dots,a_n)$ with $a_n$ even (i.e.\ $\la(n)$ is integer), then $\cQ^\rB_O(\la)$ is obtained from the type C poset polytope $\cQ_O((a_1,\dots,a_n/2))$ by scaling by a factor of 2 along the $n$ coordinates corresponding to $B$. 
Of course, this relation between type B and type C poset polytopes also holds when $a_n$ is odd if one generalizes Definition~\ref{mcopHdefC} verbatim to all (not just integral) dominant weights. Note, however, that in the latter case $\cQ^\rB_O(\la)$ will not be a lattice polytope. As a polytope in $\bR^P$ it will actually have no lattice points due to all $x_{i,i}$ being non-integers for $x\in\cQ^\rB_O(\la)$, however, other coordinates of vertices of $\cQ^\rB_O(\la)$ may be non-integers as well. For example, the vertices of $\cQ^\rB_O(\om_n)$ are the points $(\one_{M_O(J)}+\one_{J\cap B})/2$ with $J\in\cJ_n$.

The above explains why in the case of odd $a_n$ the point set relevant to us is not $\cQ^\rB_O(\la)\cap\bZ^P$ but the set of points lying in a different (affine) lattice, namely $\one_O/2+\bZ^P$. Our approach will be to consider a certain transformation of $\cQ^\rB_O(\la)$ under which points of this lattice are mapped to integer points. However, we will first discuss the lattice point set $\cQ^\rB_O(\la)\cap\bZ^P$ in the case of even $a_n$.

\begin{definition}
Consider $J\in\cJ$ and $D\subset[1,n]$ such that $(i,-i)\in J$ for all $i\in D$. Recall the set $M_O(J)$ (Definition~\ref{mcopdefC}). Let $x^{J,D}\in\bR^P$ denote the point with
\[
x^{J,D}_{i,j}=
\begin{cases}
0&\text{ if }(i,j)\notin M_O(J),\\
1&\text{ if }(i,j)\in M_O(J) \text{ and } ((i,j)\notin B \text{ or } i\in D),\\
2&\text{ if }(i,j)\in M_O(J)\cap B \text{ and }i\notin D.    
\end{cases}
\]
\end{definition}
In the above definition note that $(i,-i)\in M_O(J)$ if and only if $(i,-i)\in J$ and $x^{J,D}_{i,-i}$ equals 1 or 2 depending on whether $i\in D$.


\begin{lemma}\label{pointdecompB}
For an integral dominant $\la=(a_1,\dots,a_n)$ with $a_n$ even and a lattice point $x\in\cQ^\rB_O(\la)\cap\bZ^P$ there exist unique sequence of order ideals $J_1\subset\cdots\subset J_m$ in $\cJ$ and set $D\subset[1,n]$ such that \[x=x^{J_1,\varnothing}+\dots+x^{J_{m-1},\varnothing}+x^{J_m,D}\] and for $k\le n-1$ exactly $a_k$ of the $J_i$ lie in $\cJ_k$ while $a_n/2$ of the $J_i$ lie in $\cJ_n$.
\end{lemma}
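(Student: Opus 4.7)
The plan is to reduce the statement to the type C analog, Lemma~\ref{pointdecomp}, applied to the integral dominant weight $\mu=(a_1,\dots,a_{n-1},a_n/2)$; this exploits the fact (noted just after Definition~\ref{polytopedefB}) that $\cQ^\rB_O(\la)$ is obtained from $\cQ_O(\mu)$ by scaling the $B$-coordinates by a factor of~$2$.

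Given $x\in\cQ^\rB_O(\la)\cap\bZ^P$, I will consider the point $y\in\bR^P$ with $y_{i,j}=x_{i,j}$ for $(i,j)\notin B$ and $y_{i,-i}=\lceil x_{i,-i}/2\rceil$ for $(i,-i)\in B$, and show that $y\in\cQ_O(\mu)\cap\bZ^P$. The identity $y_{i,i}=\la(i)=\mu(i)$ and non-negativity are clear. For the chain inequalities of Definition~\ref{mcopHdefC}, when $(r,s)\notin B$ no coordinate appearing in the inequality is altered (the interior elements $(i_l,j_l)$ cannot lie in $B$ since $B$ consists of $\prec$-maximal elements of $P$, and $(p,q)\in O$ forces $(p,q)\notin B$ as $O\cap B=\varnothing$), so the type C inequality reduces to the type B inequality with $c=1$. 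When $(r,s)\in B$, the type B inequality reads $\sum x_{i_l,j_l}\le x_{p,q}-x_{r,s}/2$; the same maximality argument forces $\sum x_{i_l,j_l}$ and $x_{p,q}$ to be integers, so we may round the right side down to obtain $\sum x_{i_l,j_l}\le x_{p,q}-\lceil x_{r,s}/2\rceil=y_{p,q}-y_{r,s}$, which is the required type C inequality. This integrality-plus-rounding step is the crux of the argument.

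Once $y\in\cQ_O(\mu)\cap\bZ^P$ is established, Lemma~\ref{pointdecomp} furnishes the unique nested sequence $J_1\subset\cdots\subset J_m$ with $y=\sum_{l=1}^m\one_{M_O(J_l)}$ and the required multiplicities in each $\cJ_k$. I then set $D=\{i\in[1,n]:x_{i,-i}\text{ is odd}\}$. For $i\in D$ one has $y_{i,-i}\ge 1$, so $(i,-i)\in M_O(J_l)\subset J_l$ for some $l$ and hence $(i,-i)\in J_m$ by nestedness, so $D$ is admissible. To verify the identity $x=x^{J_1,\varnothing}+\dots+x^{J_{m-1},\varnothing}+x^{J_m,D}$ coordinatewise: off $B$ both sides equal $y_{i,j}$; on an entry $(i,-i)\in B$, maximality of $(i,-i)$ in $P$ ensures that $(i,-i)\in J_l$ implies $(i,-i)\in M_O(J_l)$, so $(i,-i)$ belongs to exactly the top $y_{i,-i}$ of the sets $M_O(J_l)$, and a direct count shows that the right side evaluates to $2y_{i,-i}-\one_{i\in D}$, which equals $x_{i,-i}$ by the definitions of $D$ and $y_{i,-i}$.

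For uniqueness, any decomposition of the stated form determines $y_{i,-i}=(x_{i,-i}+\one_{i\in D})/2$ on $B$-coordinates, so the parities of $x$ force $D=\{i:x_{i,-i}\text{ odd}\}$ and $y_{i,-i}=\lceil x_{i,-i}/2\rceil$; uniqueness of the $J_l$ then follows from Lemma~\ref{pointdecomp}. The one genuinely nontrivial step is identifying the correct rounding direction $\lceil\cdot\rceil$ and verifying that the polytope constraints survive it; the rest of the argument is bookkeeping.
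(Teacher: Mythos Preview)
Your proof is correct and follows essentially the same route as the paper's. The paper first treats the case where all $x_{i,-i}$ are even directly via Lemma~\ref{pointdecomp}, then in the general case replaces $x$ by $x'$ with $x'_{i,-i}=x_{i,-i}+1$ for odd $x_{i,-i}$ and checks $x'\in\cQ^\rB_O(\la)$ using the same integrality/strictness observation you use; your ceiling map $y_{i,-i}=\lceil x_{i,-i}/2\rceil$ simply merges these two steps, and your uniqueness argument (forcing $D$ by parity, then invoking the uniqueness in Lemma~\ref{pointdecomp}) is equivalent to the paper's direct description of $J_m$ as the smallest ideal containing the support of $x$.
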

\begin{proof}
First, note that if all $x_{i,-i}$ are even then the statement follows from Lemma~\ref{pointdecomp} applied to the polytope $\cQ_O(a_1,\dots,a_n/2)$ and the point $y$ with $y_{i,-i}=x_{i,-i}/2$ and other coordinates the same as in $x$. The corresponding set $D$ in this case is empty, of course.

Now let $D$ be the set of those $i$ for which $x_{i,-i}$ is odd. Consider the point $x'$ with $x'_{i,-i}=x_{i,-i}+1$ for $i\in D$ and other coordinates the same as in $x$. One easily checks that $x'$ also satisfies Definition~\ref{polytopedefB} and hence lies in $\cQ^\rB_O(\la)$. Indeed, the first two conditions are obvious while in the third condition one has two possibilities. Either all appearing coordinates are the same for $x$ and $x'$ or $(r,s)=(i,-i)$ for some $i\in D$. In the latter case in the inequality for $x$ the left-hand side is in $\bZ$ while the right-hand side is in $1/2+\bZ$. This means that the inequality for $x$ is strict, hence it also holds for $x'$.  

Since $x'_{i,-i}$ is even for all $i$, we have a decomposition $x'=x^{J_1,\varnothing}+\dots+x^{J_m,\varnothing}$ of the desired form. Note that $J_m$ contains all $(i,j)$ with $x_{i,j}\neq 0$ (which is equivalent to $x'_{i,j}\neq0$). This lets us consider the point $x^{J_m,D}$, we obtain \[x=x^{J_1,\varnothing}+\dots+x^{J_{m-1},\varnothing}+x^{J_m,D}.\]
Uniqueness follows from the fact that $J_m$ must be the smallest order ideal containing all $(i,j)$ with $x_{i,j}\neq 0$ and $D$ must be as above, $J_{m-1}$ must be the smallest order ideal containing all $(i,j)$ with $x_{i,j}-x^{J_m,D}_{i,j}\neq 0$ and so on.
\end{proof}

\begin{cor}
\hfill
\begin{enumerate}[label=(\alph*)]
\item For $k\le n-1$ the set of lattice points in $\cQ^\rB_O(\om_k)$ consists of all points of the form $x^{J,D}$ with $J\in\cJ_k$.
\item The set of lattice points in $\cQ^\rB_O(2\om_n)$ consists of all points of the form $x^{J,D}$ with $J\in\cJ_n$.
\end{enumerate}    
\end{cor}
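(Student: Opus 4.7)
My plan is to prove both parts in parallel by combining Lemma~\ref{pointdecompB} (which handles one inclusion) with a direct verification of the inequalities from Definition~\ref{polytopedefB} (the other inclusion).

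For the inclusion that every lattice point in the polytope is of the form $x^{J,D}$, I would apply Lemma~\ref{pointdecompB}. In (a) take $\la=\om_k$ with $k\le n-1$: then $a_n=0$ is even and the only nonzero coefficient is $a_k=1$, so the lemma forces $m=1$ with $J_1\in\cJ_k$, yielding $x=x^{J_1,D}$. In (b) take $\la=2\om_n$: then $a_n=2$ is even with $a_n/2=1$, so again $m=1$, now with $J_1\in\cJ_n$ and $x=x^{J_1,D}$.

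For the reverse inclusion I would verify that each $x^{J,D}$ (with $J\in\cJ_k$, respectively $J\in\cJ_n$, and $D\subset\{i:(i,-i)\in J\}$) lies in the polytope. The conditions $x^{J,D}_{i,i}=\la(i)$ and non-negativity follow directly from the construction together with $A\subset O$. For the chain inequality the crucial preliminary observation is that $B$ consists of $\prec$-maximal elements of $P$: any $(a,b)\succ(i,-i)$ would need $a\ge i$ and $b\gtrdot -i$, forcing $a>|b|$ and contradicting $(a,b)\in P$. Together with $B\cap O=\varnothing$ this ensures that in any admissible chain $(p,q)\prec(i_1,j_1)\prec\cdots\prec(i_m,j_m)\prec(r,s)$ only $(r,s)$ may lie in $B$. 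Hence every coordinate of $x^{J,D}$ appearing in the inequality except possibly $x^{J,D}_{r,s}$ takes values in $\{0,1\}$; moreover, since $(i_l,j_l)\notin O$, one has $x^{J,D}_{i_l,j_l}=1$ iff $(i_l,j_l)\in\max_\prec J$, and chain-totality permits at most one such index, so $\sum_l x^{J,D}_{i_l,j_l}\in\{0,1\}$.

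I anticipate the main technical step to be a short case split finishing this verification. If the sum equals $1$, the unique $(i_l,j_l)\in\max_\prec J$ forces $(p,q)\in J$ (by lower-setness, so $x^{J,D}_{p,q}=1$) and $(r,s)\notin J$ (as $(r,s)$ dominates a $\prec$-maximum of $J$, so $x^{J,D}_{r,s}=0$), giving $1\le 1$. If the sum is $0$ and $(p,q)\notin J$, lower-setness again forces $(r,s)\notin J$ and both sides vanish. Otherwise $x^{J,D}_{p,q}=1$ and the right-hand side is $1-c\,x^{J,D}_{r,s}$; the only delicate sub-case is $(r,s)\in B$ with $x^{J,D}_{r,s}=2$, where the prefactor $c=1/2$ of Definition~\ref{polytopedefB} is exactly what makes $1-\tfrac12\cdot 2=0$ non-negative. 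No deeper obstacle is expected; the entire argument pivots on noticing that $B$ consists of maximal elements of $P$.
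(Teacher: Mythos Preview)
Your proof is correct and is essentially the argument the paper leaves implicit: one inclusion is read off from Lemma~\ref{pointdecompB} with $m=1$, and the reverse inclusion is the direct check of Definition~\ref{polytopedefB}, whose only nontrivial point is precisely your observation that the elements of $B$ are $\prec$-maximal in $P$, so they can appear only as $(r,s)$ in the chain inequality and the factor $c=\tfrac12$ absorbs the value $2$.
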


\begin{remark}
Polytopes obtained from order polytopes by scaling along coordinates are studied in~\cite{BS} under the name \textit{lecture hall polytopes} and exhibit interesting lattice point properties. It seems plausible that this work can be extended to a theory of (lecture hall) MCOPs by generalizing the above results to general posets.
\end{remark}



\subsection{Transformed type B poset polytopes}

We now define a transformed version of $\cQ^\rB_O(\la)$ similarly to the type C case, however, here the transformation also includes a projection onto $\bR^{P\bs A}$. Consider the linear map $\xi:\bR^P\to\bR^P$ as in Definition~\ref{defxi} (where the values $r(i,j)$ are given by Definition~\ref{twisted}). Let $\pi$ denote the standard projection $\bR^P\to \bR^{P/A}$. We set $\Pi^\rB_O(\la)=\pi\xi(\cQ^\rB_O(\la))$. 

Note that for any $x\in\bR^P$ and $i\in[1,n]$ one has $\sum_j\xi(x)_{i,j}=x_{i,i}$. Let $W_\la\subset\bR^P$ be the affine subspace of points $x$ with $x_{i,i}=\la(i)$. One sees that $\xi(W_\la)$ consists of $x$ such that $\sum_j x_{i,j}=\la(i)$ for every $i$. Hence, $\pi\xi$ maps $W_\la$ bijectively onto $\bR^{P\bs A}$. In particular, $\pi\xi$ is an affine equivalence between $\cQ^\rB_O(\la)$ and $\Pi^\rB_O(\la)$. Furthermore, we have the following.
\begin{proposition}\label{latticesmap}
Consider integral dominant $\la=(a_1,\dots,a_n)$. 
\begin{enumerate}[label=(\alph*)]
\item If $a_n$ is even, then the affine lattice $W_\la\cap \bZ^P$ is mapped bijectively to $\bZ^{P\bs A}$ by $\pi\xi$.
\item If $a_n$ is odd, then the affine lattice $W_\la\cap(\one_O/2+\bZ^P)$ is mapped bijectively to $\bZ^{P\bs A}$ by $\pi\xi$.
\end{enumerate}
\end{proposition}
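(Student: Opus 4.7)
The map $\pi\xi \colon W_\la \to \bR^{P \bs A}$ is (as already remarked in the text just before the proposition) an affine bijection for any $\la$: $\xi$ is unimodular, and $\pi$ restricts to a bijection from $\xi(W_\la) = \{x : \sum_j x_{i,j} = \la(i), \, 1 \le i \le n\}$ to $\bR^{P \bs A}$ because the $A$-coordinates of any point of $\xi(W_\la)$ are determined by its non-$A$-coordinates.

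Part (a) will then be immediate: if $a_n$ is even then every $\la(i) \in \bZ$, and a point $x \in W_\la$ lies in $\bZ^P$ iff the non-$A$-coordinates of $\xi(x)$ are integers (the $A$-coordinates being then automatically integers from the sum relation $x_{i,i} = \la(i) - \sum_{j \ne i}\xi(x)_{i,j}$), iff $\pi\xi(x) \in \bZ^{P\bs A}$; unimodularity of $\xi$ closes the loop.

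For part (b), $a_n$ odd makes every $\la(i) \in \tfrac12 + \bZ$. The same mechanism will apply once I establish $\xi(\one_O) = \one_A$: granting this, $\xi(\one_O/2 + \bZ^P) = \one_A/2 + \bZ^P$, whose non-$A$-coordinates are integers and whose $A$-coordinates lie in $\tfrac12 + \bZ$, consistent with the constraints $\sum_j x_{i,j} = \la(i) \in \tfrac12 + \bZ$ defining $\xi(W_\la)$. To reduce to the equality $\xi(\one_O) = \one_A$, denote by $j_k(i)$ the $\lessdot$-maximal $j$ with $(i,j) \in O$ and enumerate $\{j : (i,j) \in O\}$ as $j^{(i)}_1 = i \lessdot \dots \lessdot j^{(i)}_{p_i} = j_k(i)$; Definition~\ref{defxi} makes the row-$i$ contribution $\sum_{l=1}^{p_i} \xi(\epsilon_{i, j^{(i)}_l})$ telescope to $\epsilon_{i, r(i, j_k(i))}$. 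Thus $\xi(\one_O) = \sum_i \epsilon_{i, r(i, j_k(i))}$, and the required equality becomes the combinatorial identity
\[ r(i, j_k(i)) = i \qquad (1 \le i \le n). \]

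This identity is the main obstacle. Writing $r(i, j_k(i)) = (w_O^{-1} w_M)(i)$ with $M = \langle i, j_k(i)\rangle \cap O = M_O(\langle i, j_k(i)\rangle)$, the claim is $w_O(i) = w_M(i)$. All factors $s_{i',j'}$ of $w_O$ with $i' > i$ fix $i$ (because $|j'| \ge i' > i$), and the row-$i$ blocks $w_M^{=i} = w_O^{=i}$ both carry $i$ to $j_k(i)$, so the task reduces to $w_O^{<i}(j_k(i)) = w_M^{<i}(j_k(i))$. Within each row $i' < i$ the ``extras'' $s_{i',j'}$ with $(i',j') \in O$ and $j' \gtrdot j_k(i)$ lie to the right of the $M$-factors in the product, so right-to-left application processes, for each $i'$, first the extras cycle on $\{i'\} \cup \{j \gtrdot j_k(i) : (i',j) \in O\}$ and then the $M$-cycle on $\{i'\} \cup \{j \ledot j_k(i) : (i',j) \in O\}$. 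I plan to prove by descending induction on $i'$ the invariant ``$v \ledot j_k(i)$ and $|v| > i'$'' on the current value $v$ (initially $v = j_k(i)$): the first clause keeps $v$ outside the $\{j \gtrdot j_k(i)\}$ branch of the extras cycle and the second rules out $v = i'$, so the extras fix $v$; the subsequent $M$-cycle then preserves the invariant because its support lies in $\{j \ledot j_k(i)\}$ with $|j| \ge i'$. The delicate part will be checking the invariant across $M$-cycle boundary moves (where $v$ can land on $i'$ itself or on an $O$-element of row $i'$), using the poset restriction $(i',j) \in P \Rightarrow |j| \ge i'$ together with the standing hypothesis $O \cap B = \varnothing$ (which forces $j_k(i) \lessdot -i$ and hence confines the $M$-cycles to $\{j : j \ledot j_k(i)\}$) to exclude sign coincidences.
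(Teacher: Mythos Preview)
Your argument is correct and follows the same overall route as the paper: reduce part~(b) to the identity $\xi(\one_O)=\one_A$, equivalently $r(i,j_k(i))=i$ for each $i$. The difference lies only in how this identity is established.

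The paper dispatches it in one sentence using the pipe-dream picture: the $i$th pipe of $O$ and the $i$th pipe of $M_O(\langle i,j_k(i)\rangle)$ coincide, whence $w_O(i)=w_{M_O(\langle i,j_k(i)\rangle)}(i)$ and $r(i,j_k(i))=i$. The coincidence is clear once one observes that the $i$th pipe of $O$ first runs through $(i,-i),(i,(-i)^-),\dots,(i,j_k(i))$ (none of these except the last lie in $O\cup A$), turns at $(i,j_k(i))$, and from that point onward is a decreasing chain inside the order ideal $\langle i,j_k(i)\rangle$, where $O$ and $M_O(\langle i,j_k(i)\rangle)=O\cap\langle i,j_k(i)\rangle$ agree. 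Your approach instead unwinds the transposition-product definition of $w_O$ and $w_M$ and tracks the value row by row with the invariant $v\ledot j_k(i)$, $|v|>i'$. The induction does go through exactly as you outline: the support of the row-$i'$ ``extras'' cycle is $\{i'\}\cup\{j'\gtrdot j_k(i):(i',j')\in O\}$, which the invariant avoids, and the row-$i'$ $M$-cycle has support in $\{l:l\ledot j_k(i),\,|l|\ge i'\}$, which re-establishes the invariant for the next step. This is more elementary in that it never invokes the pipe-dream interpretation, at the cost of being longer. One small point: the hypothesis $O\cap B=\varnothing$ is not actually needed for $r(i,j_k(i))=i$; the identity holds for any $O\supset A$ (as the paper's pipe argument also shows), so the ``sign coincidences'' you worry about do not arise.
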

\begin{proof}
For (a) first note that $\pi\xi$ maps $W_\la\cap \bZ^P$ injectively into $\bZ^{P\bs A}$. Now, for $y\in\bZ^{P\bs A}$ we have $\pi\xi(x)=y$ where $x=\xi^{-1}(x')$ and $x'\in\bZ^P$ is uniquely determined by $x'_{i,j}=y_{i,j}$ for $(i,j)\in P\bs A$ and $x'\in \xi(W_\la)$ (recall that $\xi$ is unimodular).

For (b) note that \[(W_\la\cap(\one_O/2+\bZ^P))-\one_O/2=W_{\la-\om_n}\cap\bZ^P\] and the above is mapped bijectively onto $\bZ^{P\bs A}$. However, Definition~\ref{defxi} implies that $\xi(\one_O)=\sum_i\epsilon_{i,r(i,j_i)}$ where $j_i$ is $\lessdot$-maximal such that $(i,j_i)\in O$. Since the $i$th pipes of $O$ and $M_O(\al i,j_i\ar)$ coincide, we have $r(i,j_i)=i$ and ${\pi\xi(\one_O)=0}$.
\end{proof}

From here on we largely shift our attention to the polytopes $\Pi^\rB_O(\la)$ and their lattice points. The above proposition shows that those are in bijection with the set \[\cQ^\rB_O(\la)\cap(a_n\one_O/2+\bZ^P).\] We point out, however, that $\Pi^\rB_O(\la)$ will still not be a lattice polytope when $a_n$ is odd even though we will be studying its lattice points (see Example~\ref{piBex}).

\begin{proposition}\label{imagexJD}
For $x^{J,D}$ with $J\in\cJ_k$ one has $\pi\xi(x^{J,D})_{i,j}=0$ when $i>k$, otherwise
\[
\pi\xi(x^{J,D})_{i,j}=
\begin{cases}
0&\text{ if }j\neq w^{O,J}(i),\\
1&\text{ if }j=w^{O,J}(i)\text{ and }((i,j)\notin B\text{ or }i\in D),\\
2&\text{ if }j=w^{O,J}(i)\text{ and }(i,j)\in B\text{ and }i\notin D.
\end{cases}
\] 
\end{proposition}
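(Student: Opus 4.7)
The plan is to decompose $x^{J,D}$ linearly, apply $\xi$ term by term using Proposition~\ref{pointimage} and Definition~\ref{defxi}, and then project by $\pi$. First observe that every $(i,-i)\in B$ is $\prec$-maximal in $P$, so $(i,-i)\in M_O(J)$ if and only if $(i,-i)\in J$; setting $S=\{i:(i,-i)\in J\}$, the hypothesis on $D$ reads $D\subseteq S$, and $i\in S$ forces $(i,i)\in J$ and hence $i\le k$. Inspecting the definition of $x^{J,D}$ one obtains
\[x^{J,D}=\one_{M_O(J)}+\sum_{i\in S\setminus D}\epsilon_{i,-i}.\]
Proposition~\ref{pointimage} computes $\xi(\one_{M_O(J)})=\sum_{i=1}^k\epsilon_{i,w^{O,J}(i)}$, so what remains is to evaluate $\xi(\epsilon_{i,-i})$ for $i\in S$. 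By Definition~\ref{defxi} this equals $\epsilon_{i,r(i,-i)}-\epsilon_{i,r(i,j''(i))}$, where $j''(i)$ is the $\lessdot$-maximal $j\lessdot -i$ with $(i,j)\in O$; since $(i,-i)\notin O$ this $j''(i)$ is in fact the overall $\lessdot$-maximum of all $j$ with $(i,j)\in O$.

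Two key identities are needed: $r(i,j''(i))=i$ and $r(i,-i)=-i$. The first is exactly the statement used in the proof of Proposition~\ref{latticesmap}(b), as the $i$-th pipes of $O$ and of $M_O(\langle i,j''(i)\rangle)$ coincide. The second follows from the first together with bijectivity: Proposition~\ref{wrproperties}\ref{rbound1} gives $|r(i,-i)|\ge i$ and $|r(i,-i)|\ledot i$, so $r(i,-i)\in\{i,-i\}$; but the value $i$ is already attained by $r(i,j''(i))$ with $j''(i)\ne -i$, so by the bijectivity of $r(i,\cdot)$ (Proposition~\ref{wrproperties}(b)) one must have $r(i,-i)=-i$. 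Combining these, $\xi(\epsilon_{i,-i})=\epsilon_{i,-i}-\epsilon_{i,i}$ and hence $\pi\xi(\epsilon_{i,-i})=\epsilon_{i,-i}$.

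It remains to assemble the contributions and compare with the case split in the statement. For $i\in S$ the $\lessdot$-maximal $j$ with $(i,j)\in M_O(J)$ is $-i$, so Proposition~\ref{wrproperties}\ref{wOisr} together with $r(i,-i)=-i$ gives $w^{O,J}(i)=-i$; adding the $\pi\xi(\epsilon_{i,-i})$ contribution yields $2\epsilon_{i,-i}$ when $i\in S\setminus D$ and $\epsilon_{i,-i}$ when $i\in S\cap D$. For $i\in[1,k]\setminus S$ only the Proposition~\ref{pointimage} summand contributes, and bijectivity of $r(i,\cdot)$ together with $r(i,-i)=-i$ forces $w^{O,J}(i)\ne -i$, so $\pi(\epsilon_{i,w^{O,J}(i)})$ equals $\epsilon_{i,w^{O,J}(i)}$ if $w^{O,J}(i)\ne i$ and vanishes otherwise. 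For $i>k$ both summands are absent. Each case matches the stated formula; the only genuinely nontrivial step is the identity $r(i,-i)=-i$, which becomes a one-line consequence of the pipe-dream identities in Proposition~\ref{wrproperties} and the proof of Proposition~\ref{latticesmap} once one extracts $|r(i,-i)|=i$.
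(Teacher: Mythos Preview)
Your proof is correct and follows essentially the same route as the paper: decompose $x^{J,D}=\one_{M_O(J)}+\sum_{i\in S\setminus D}\epsilon_{i,-i}$ (your $S$ is the paper's $D'$), apply Proposition~\ref{pointimage} to the first summand, and verify $\pi\xi(\epsilon_{i,-i})=\epsilon_{i,-i}$ via $r(i,-i)=-i$ and $r(i,j''(i))=i$. The only cosmetic difference is that the paper obtains $r(i,-i)=-i$ directly from the pipe-dream description (the $-i$th pipe of $O$ does not turn at $(i,-i)$ since $(i,-i)\notin O$), whereas you deduce it from the bounds in Proposition~\ref{wrproperties}\ref{rbound1} together with bijectivity; both arguments are equally short.
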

\begin{proof}
The first claim is immediate since $x^{J,D}_{i,j}=0$ whenever $i>k$.

Recall that $w^{O,J}(i)=r(i,j)$ for the $\lessdot$-maximal $j$ such that $(i,j)\in M_O(J)$. Since $(i,-i)\notin O$, we have $r(i,-i)=-i$ for all $i$. Hence, $w^{O,J}(i)=-i$ holds if and only if $(i,-i)\in J$. Let $D'$ consist of all $i$ for which $(i,-i)\in J$, we see that the last case in the proposition's statement occurs if and only if $j=-i$ and $i\in D'\bs D$. 

Thus, if $D=D'$, the claim follows directly from Proposition~\ref{pointimage} because in this case $x^{J,D}=\one_{M_O(J)}$. To pass to the general case note that \[x^{J,D}=x^{J,D'}+\sum_{i\in D'\bs D}\epsilon_{i,-i}.\] Hence, we are to check that $\pi\xi(\epsilon_{i,-i})=\epsilon_{i,-i}$. Note that we have $r(i,j)=i$ for the $\lessdot$-maximal $j$ such that $(i,j)\in O$. We obtain \[\pi\xi(\epsilon_{i,-i})=\pi(\epsilon_{i,r(i,-i)}-\epsilon_{i,r(i,j)})=\epsilon_{i,-i}.\qedhere\]
\end{proof}

We now give a description of the set $\Pi^\rB_O(\la)\cap\bZ^{P\bs A}$. For $D\subset[1,n]$ let $y^D\in\bR^{P\bs A}$ be the point with $y^D_{i,-i}=1$ for $i\in D$ and all other coordinates zero.

\begin{lemma}\label{pipointdecompB}
For an integral dominant $\la=(a_1,\dots,a_n)$ consider $y\in\Pi^\rB_O(\la)\cap\bZ^{P\bs A}$.
\begin{enumerate}[label=(\alph*)]
\item If $a_n$ is even, there exist  unique sequence of order ideals $J_1\subset\cdots\subset J_m$ in $\cJ$ and set $D\subset[1,n]$ such that \[y=\pi\xi(x^{J_1,\varnothing})+\dots+\pi\xi(x^{J_{m-1},\varnothing})+\pi\xi(x^{J_m,D})\] and for $k\le n-1$ exactly $a_k$ of the $J_i$ lie in $\cJ_k$ while $a_n/2$ of the $J_i$ lie in $\cJ_n$.  
\item If $a_n$ is odd, there exist unique sequence of order ideals $J_1\subset\cdots\subset J_m$ in $\cJ$ and set $D\subset[1,n]$ such that \[y=\pi\xi(x^{J_1,\varnothing})+\dots+\pi\xi(x^{J_m,\varnothing})+y^D\] and for $k\le n-1$ exactly $a_k$ of the $J_i$ lie in $\cJ_k$ while $(a_n-1)/2$ of the $J_i$ lie in $\cJ_n$.    
\end{enumerate}
\end{lemma}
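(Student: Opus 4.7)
My plan is to treat the two cases separately: part (a) is a direct transfer of Lemma~\ref{pointdecompB} along the affine bijection $\pi\xi|_{W_\la}$, and part (b) is reduced to (a) after peeling off a ``spin contribution'' $y^D$.

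For part (a), since $a_n$ is even, Proposition~\ref{latticesmap}(a) provides a bijection $W_\la\cap\bZ^P\to\bZ^{P\bs A}$; combined with the affine equivalence $\pi\xi\colon\cQ^\rB_O(\la)\to\Pi^\rB_O(\la)$, this identifies $\cQ^\rB_O(\la)\cap\bZ^P$ with $\Pi^\rB_O(\la)\cap\bZ^{P\bs A}$. I would lift $y$ to the unique $x\in\cQ^\rB_O(\la)\cap\bZ^P$, invoke Lemma~\ref{pointdecompB} for the unique decomposition of $x$, and push it forward by $\pi\xi$. For uniqueness on the $y$-side, note that each $x^{J_i,\varnothing}$ with $J_i\in\cJ_{k_i}$ (and likewise $x^{J_m,D}$) lies in $W_{\om_{k_i}}$, so any summation of the stated form lies in $W_\la$ and pulls back via $\pi\xi^{-1}$ to a decomposition of $x$, unique by Lemma~\ref{pointdecompB}.

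For part (b), the first observation—immediate from Proposition~\ref{imagexJD}—is that the $(i,-i)$-coordinate of every $\pi\xi(x^{J,\varnothing})$ is either $0$ or $2$. Hence in any decomposition of the stated form, $D$ must be precisely $\{i\in[1,n]:y_{i,-i}\text{ is odd}\}$, which pins $D$ down from $y$. Setting $y'=y-y^D$, all coordinates $y'_{i,-i}$ become even, and the plan is to prove $y'\in\Pi^\rB_O(\la-\om_n)\cap\bZ^{P\bs A}$; then since $\la-\om_n$ has even last coordinate, part (a) applies to $y'$ and produces a decomposition in which the resulting ``$D$'' must be empty (otherwise some $y'_{i,-i}$ would be odd). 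Adding back $y^D$ yields the desired decomposition of $y$, and uniqueness follows from the forcing of $D$ together with uniqueness in part (a).

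The crux is the claim $y'\in\Pi^\rB_O(\la-\om_n)$. My approach is to lift $y$ via Proposition~\ref{latticesmap}(b) to the unique $x\in\cQ^\rB_O(\la)\cap(\one_O/2+\bZ^P)$ and then exhibit an explicit ``spin point'' $v\in\cQ^\rB_O(\om_n)\cap(\one_O/2+\bZ^P)$ with $\pi\xi(v)=y^D$. A natural candidate is the vertex $(\one_{M_O(J^*)}+\one_{J^*\cap B})/2$ for a well-chosen $J^*\in\cJ_n$ built from $O$ and $D$ so that $O\subset J^*$ and $\max_\prec J^*\subset O\cup B$, which is precisely what forces $v$ into $\one_O/2+\bZ^P$. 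Writing $x'=x-v\in W_{\la-\om_n}\cap\bZ^P$, one then verifies that $x'$ satisfies the defining chain inequalities of $\cQ^\rB_O(\la-\om_n)$ from Definition~\ref{polytopedefB}, via a case analysis on each chain according to whether its endpoint $(r,s)$ lies in $B$ (triggering the coefficient $c=1/2$) and whether $(p,q)\in J^*\cap O$. I expect this inequality verification, together with pinning down and analyzing the correct $J^*$, to be the main obstacle: one must show that the $1/2$-defects produced by subtracting $v$ align precisely with the $c$-coefficients in each chain inequality. This matches the ``surprising difficulty'' at the fundamental spin level flagged in the introduction.
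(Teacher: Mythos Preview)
Your approach is essentially the paper's: part (a) is exactly the transport along $\pi\xi|_{W_\la}$ of Lemma~\ref{pointdecompB}, and for part (b) the paper also fixes $D=\{i:y_{i,-i}\text{ odd}\}$, lifts $y$ to $x\in\cQ^\rB_O(\la)\cap(\one_O/2+\bZ^P)$, subtracts a ``spin point'' and checks the chain inequalities for $x$ minus that point.

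The one place where you are working harder than necessary is the choice of the spin point. Rather than hunting for a $J^*\in\cJ_n$ with $O\subset J^*$ and $\max_\prec J^*\subset O\cup B$, the paper simply takes
\[
x^D=\one_{\{(i,-i)\mid i\in D\}}+\tfrac12\one_O,
\]
and observes $\pi\xi(x^D)=y^D$ directly from the already-established identities $\pi\xi(\epsilon_{i,-i})=\epsilon_{i,-i}$ and $\pi\xi(\one_O)=0$. (Your vertex, with $J^*$ the order ideal generated by $O\cup\{(i,-i):i\in D\}$, is in fact this same point.) With this explicit $x^D$, the verification that $x-x^D\in\cQ^\rB_O(\la-\om_n)$ is short: the left-hand side of each chain inequality is unchanged (the interior chain elements are neither in $O$ nor in $B$), while the right-hand side either stays the same or drops by $\tfrac12$ from a value in $\tfrac12+\bZ$, so the integer left-hand side still lies below. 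The ``surprising difficulty'' flagged in the introduction refers to the base case of Theorem~\ref{essentialB}, not to this lemma.
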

\begin{proof}
Part (a) is immediate from Proposition~\ref{latticesmap} and Lemma~\ref{pointdecompB}.

For part (b) let $D$ be the set of all $i$ for which $y_{i,-i}$ is odd. By part (a) it suffices to show that $y-y^D\in\Pi^\rB_O(\la-\om_n)$. We have a unique point $x\in W_\la\cap(\one_O/2+\bZ^P)$ with $\pi\xi(x)=y$. However, we have seen that $\pi\xi(\epsilon_{i,-i})=\epsilon_{i,-i}$ and $\pi\xi(\one_O)=0$, hence $\pi\xi(x^D)=y^D$ where $x^D=\one_{\{(i,-i)|i\in D\}}+\one_O/2$. Therefore, it suffices to check that $x-x^D\in\cQ^\rB_O(\la-\om_n)$. 

To do so one verifies that $x-x^D$ satisfies Definition~\ref{polytopedefB}. The first two conditions are immediate from the corresponding conditions for the point $x$ and polytope $\cQ^\rB_O(\la)$. In the third condition, when passing from $x$ to $x-x^D$ the left-hand side does not change. The right-hand changes only if $(r,s)\notin O$ and $(r,s)\neq (i,-i)$ for any $i\in D$. In the latter case, however, the right-hand side decreases by $1/2$ originally being in $1/2+\bZ$ and becoming an integer. Since the right-hand side is an integer, the inequality remains true. 
\end{proof}

\begin{cor}\label{spinpoints}
The set $\Pi^\rB_O(\om_n)\cap\bZ^{P\bs A}$ consists of all points of the form $y^D$.
\end{cor}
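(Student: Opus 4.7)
The corollary consists of two inclusions. For the forward inclusion, I would apply Lemma~\ref{pipointdecompB}(b) directly with $\la=\om_n$. Since $a_n=1$ is odd and $a_k=0$ for $k\le n-1$, the constraints $a_k = \#\{i:J_i\in\cJ_k\}$ and $(a_n-1)/2 = \#\{i:J_i\in\cJ_n\}$ force the sequence $J_1\subset\dots\subset J_m$ to be empty. Hence any lattice point has the form $y=y^D$ for some (unique) $D\subset[1,n]$.

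For the reverse inclusion I would exhibit, for each $D\subset[1,n]$, an explicit preimage of $y^D$ in $\cQ^\rB_O(\om_n)$. The candidate is
\[
x^D \;=\; \one_{\{(i,-i)\,:\,i\in D\}} + \tfrac{1}{2}\one_O \;\in\; \bR^P.
\]
By the computations used in the proof of Lemma~\ref{pipointdecompB}(b) we already know $\pi\xi(\one_O)=0$ and $\pi\xi(\epsilon_{i,-i})=\epsilon_{i,-i}$, so linearity gives $\pi\xi(x^D)=y^D$. It then suffices to verify that $x^D\in\cQ^\rB_O(\om_n)$ by checking the three bullet points of Definition~\ref{polytopedefB} with $\la=\om_n$ (so $\la(i)=1/2$ for every $i$). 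Non-negativity and the marking condition $x^D_{i,i}=1/2$ are immediate from $A\subset O$ and $O\cap B=\varnothing$.

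The crux is the chain inequality. Here the key observation I would isolate first is that every element $(i,-i)\in B$ is $\prec$-maximal in $P$: any $(i',j')\succ(i,-i)$ would need $i'\ge i$ and $j'\gtrdot -i$, but then $|j'|<i\le i'$ contradicts the defining inequality $|j'|\ge i'$ of $P$. Consequently, in any chain $(p,q)\prec(i_1,j_1)\prec\dots\prec(i_m,j_m)\prec(r,s)$ of the form considered in Definition~\ref{polytopedefB}, neither $(p,q)$ nor any $(i_l,j_l)$ can lie in $B$. This collapses the left-hand side of the chain inequality to $0$ (since each $(i_l,j_l)\notin O\cup B$), while the right-hand side $x^D_{p,q}-c\,x^D_{r,s}$ equals $1/2$ minus one of $\{0,\,1/2,\,(1/2)\cdot 1,\,(1/2)\cdot 0\}$ depending on whether $(r,s)$ lies in $O$, in $P\setminus(O\cup B)$, or in $B$ (with the sub-case $r\in D$ vs.\ $r\notin D$ in the last situation). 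A four-line case split shows the RHS is always $\ge 0$.

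The only delicate point in the whole argument is that single case check, specifically the matching of the factor $c=1/2$ (which appears exactly when $(r,s)\in B$) with the coefficient of the $\one_{\{(i,-i):i\in D\}}$ summand in $x^D$: this is precisely why the scaling along $B$-coordinates in Definition~\ref{polytopedefB} is needed for $x^D$ to be feasible. Once these cases are verified, $x^D\in\cQ^\rB_O(\om_n)$, so $y^D=\pi\xi(x^D)\in\Pi^\rB_O(\om_n)\cap\bZ^{P\bs A}$, completing the reverse inclusion.
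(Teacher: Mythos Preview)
Your argument is correct and follows the route the paper implicitly intends: the forward inclusion is exactly the $\lambda=\omega_n$ instance of Lemma~\ref{pipointdecompB}(b), while the reverse inclusion amounts to verifying directly that $x^D=\one_{\{(i,-i):i\in D\}}+\tfrac12\one_O$ lies in $\cQ^\rB_O(\omega_n)$. That verification is essentially the same inequality check that already appears inside the proof of Lemma~\ref{pipointdecompB}(b) (there it is phrased as showing $x-x^D\in\cQ^\rB_O(\lambda-\omega_n)$, which uses precisely that subtracting $x^D$ preserves the chain inequalities); you have simply isolated it as a standalone statement, which is the cleanest way to justify the corollary.

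One small slip in your case split: when $(r,s)\in O$ you have $c=1$ and $x^D_{r,s}=\tfrac12$, so the subtracted term is $\tfrac12$, not $0$; and when $(r,s)\in P\setminus(O\cup B)$ the subtracted term is $1\cdot 0=0$, not $\tfrac12$. Your first two listed values are therefore interchanged. This does not affect the conclusion, since the set of possible values of $c\,x^D_{r,s}$ is still $\{0,\tfrac12\}$ and hence $x^D_{p,q}-c\,x^D_{r,s}\in\{0,\tfrac12\}\ge 0$ in every case.
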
 

\begin{cor}\label{pointsdim}
For any integral dominant $\la$ one has $|\Pi^\rB_O(\la)\cap\bZ^{P\bs A}|=\dim V_\la$.    
\end{cor}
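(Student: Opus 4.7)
The plan is to deduce the lattice-point count from Lemma~\ref{pipointdecompB} by showing that it does not depend on the choice of $O$, and then to evaluate it at a convenient specialization using previously established results.

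First, I would observe that Lemma~\ref{pipointdecompB} sets up a bijection between $\Pi^\rB_O(\la) \cap \bZ^{P\bs A}$ and a set of combinatorial data: a chain $J_1 \subset \cdots \subset J_m$ in $\cJ$ with the prescribed number of members in each $\cJ_k$ (namely $a_k$ for $k \le n-1$, and either $a_n/2$ or $(a_n-1)/2$ for $k=n$ depending on parity), together with a subset $D \subset [1,n]$ that is either constrained to $\{i : (i,-i) \in J_m\}$ (when $a_n$ is even) or arbitrary (when $a_n$ is odd). Crucially, this target set is defined purely in terms of the poset $P$ and the weight $\la$, with no reference to $O$. Hence $|\Pi^\rB_O(\la) \cap \bZ^{P\bs A}|$ does not depend on the choice of $O$ (subject to $A \subset O$ and $O \cap B = \varnothing$).

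Next, I would specialize to $O = A$. Here $r(i,j) = j$ for all $(i,j)$, so by the computation of Example~\ref{xiexfflv} (which carries over verbatim to the type B setting), $\xi$ acts non-trivially only on $A$-coordinates and $\pi\xi$ reduces to the coordinate projection $\bR^P \to \bR^{P\bs A}$. Combined with Proposition~\ref{latticesmap}, this gives a bijection between $\Pi^\rB_A(\la) \cap \bZ^{P\bs A}$ and $\cQ^\rB_A(\la) \cap L_\la$, where $L_\la = \bZ^P$ when $a_n$ is even and $L_\la = \one_A/2 + \bZ^P$ when $a_n$ is odd. As noted under Definition~\ref{polytopedefB}, the polytope $\cQ^\rB_A(\la)$ is the type B FFLV polytope of~\cite{MtypeB}, whose $L_\la$-lattice points index a PBW-monomial basis of $V_\la$; thus the count is exactly $\dim V_\la$.

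The main obstacle will be confirming that the parametrization in Lemma~\ref{pipointdecompB} is genuinely a bijection rather than merely a surjection from lattice points onto pairs: one must verify that \emph{every} pair satisfying the stated combinatorial conditions actually produces a lattice point, i.e.\ that the sum $x^{J_1,\varnothing} + \cdots + x^{J_{m-1},\varnothing} + x^{J_m,D}$ always lies in $\cQ^\rB_O(\la)$. This amounts to a Minkowski-sum-type property for type B poset polytopes analogous to Theorem~\ref{minkowskiC}, which can be established directly from the defining chain inequalities of Definition~\ref{polytopedefB} by induction on the chain length, since each summand $x^{J_i, \varnothing}$ lies in the appropriate fundamental polytope $\cQ^\rB_O(\om_{k_i})$ and the top term $x^{J_m, D}$ lies in $\cQ^\rB_O(\om_{k_m})$ or $\cQ^\rB_O(2\om_n)$ (for $J_m \in \cJ_n$), while the convex constraints are additive under summation.
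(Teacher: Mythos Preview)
Your proposal is correct and follows essentially the same approach as the paper: deduce $O$-independence from the decomposition of Lemma~\ref{pipointdecompB} and then specialize to a known case (the paper offers both $O=P\setminus B$ via~\cite{BZ} and $O=A$ via~\cite{MtypeB}, you choose the latter). Your ``main obstacle'' is a fair point that the paper glosses over, but the reverse direction is straightforward: the sum $x^{J_1,\varnothing}+\cdots+x^{J_m,\varnothing}$ lies in $\cQ^\rB_O(\la)$ by the type~C Minkowski property after scaling, and passing to $x^{J_m,D}$ only decreases coordinates $x_{i,-i}$, which (since $(p,q)\in O$ forces $(p,q)\notin B$) can only relax the defining inequalities of Definition~\ref{polytopedefB}.
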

\begin{proof}
Lemma~\ref{pipointdecompB} shows that the number of integer points in $\Pi^\rB_O(\la)$ does not depend on $O$. However, in the case of $O=P\bs B$ the sets $\cQ^\rB_O(\la)\cap\bZ^P$ when $\la(1)\in\bZ$ and $\cQ^\rB_O(\la)\cap(\one_O/2+\bZ^P)$ when $\la(1)\in 1/2+\bZ$ are studied in~\cite{BZ} and shown to have size $\dim V_\la$. See definition of an $o_{2r+1}$-pattern and Theorem 4.2 in loc.\ cit. By Proposition~\ref{latticesmap} these sets are in bijection with $\Pi^\rB_O(\la)\cap\bZ^{P\bs A}$. 

Alternatively, in the case of $O=A$ one has $\xi(x)_{i,j}=x_{i,j}$ for $(i,j)\in P\bs A$ and hence $\Pi^\rB_A(\la)=\pi(\cQ^\rB_A(\la))$. Such a polytope is shown to have $\dim V_\la$ integer points in~\cite[Corollary 2.2]{MtypeB}.
\end{proof}

\begin{cor}\label{minkowskiB}
For integral dominant $\la=(a_1,\dots,a_n)$ and $\mu=(b_1,\dots,b_n)$ with at least one of $a_n$ and $b_n$ even one has \[\Pi^\rB_O(\la+\mu)\cap\bZ^{P\bs A}=\Pi^\rB_O(\la)\cap\bZ^{P\bs A}+\Pi^\rB_O(\mu)\cap\bZ^{P\bs A}.\]
\end{cor}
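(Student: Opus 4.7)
The inclusion $\supseteq$ follows from a Minkowski-sum inclusion at the level of the original polytopes: each inequality in Definition~\ref{polytopedefB} is affine in the pair $(x,\la)$, so summing the inequality for $x_1\in\cQ^\rB_O(\la)$ with the corresponding inequality for $x_2\in\cQ^\rB_O(\mu)$ yields the defining inequality for $x_1+x_2\in\cQ^\rB_O(\la+\mu)$. Applying the affine map $\pi\xi$ gives $\Pi^\rB_O(\la)+\Pi^\rB_O(\mu)\subseteq\Pi^\rB_O(\la+\mu)$, and since $\bZ^{P\bs A}$ is closed under addition, the sum of a lattice point from each summand is a lattice point of the right-hand side.

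For the harder inclusion $\subseteq$ I would exploit the unique chain decomposition of Lemma~\ref{pipointdecompB}. Assume without loss of generality $a_n$ is even, and take $y\in\Pi^\rB_O(\la+\mu)\cap\bZ^{P\bs A}$. Apply the appropriate part of Lemma~\ref{pipointdecompB} to obtain a chain $J_1\subset\cdots\subset J_m$ in $\cJ$ together with a set $D$ (attached either to the last term $J_m$ as in part (a), or as the separate summand $y^D$ as in part (b) when $b_n$ is odd), with multiplicities $a_k+b_k$ in $\cJ_k$ for $k\le n-1$ and either $(a_n+b_n)/2$ or $(a_n+b_n-1)/2$ in $\cJ_n$. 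Since $|J_i\cap A|$ is non-decreasing in $i$, the indices with $J_i\in\cJ_k$ form a contiguous block for each $k$, so I can freely pick any $a_k$ of them for a ``$\la$-sub-chain'' $J_{i_1}\subset\cdots\subset J_{i_p}$ and assign the remaining $b_k$ to the complementary ``$\mu$-sub-chain''. Any subset of a chain is a chain, so both sub-chains are valid.

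It remains to reassemble the two sub-chains into lattice points of $\Pi^\rB_O(\la)$ and $\Pi^\rB_O(\mu)$ that sum to $y$, choosing the $D$-data consistently. When $b_n$ is even, Lemma~\ref{pipointdecompB}(a) applies to both sides: I place the set $D$ in whichever sub-chain contains the index $m$ and set the $D$-data of the other sub-chain to $\varnothing$. When $b_n$ is odd, the $\la$-sub-chain gets $a_n/2$ indices in $\cJ_n$ and $D_\la=\varnothing$ (using part (a)), while the $\mu$-sub-chain gets $(b_n-1)/2$ indices in $\cJ_n$ together with the trailing $y^D$ (using part (b)). In either case the two reassembled points $y_\la,y_\mu$ are lattice points of their respective polytopes because (i) each individual summand $\pi\xi(x^{J_i,\varnothing})$, $\pi\xi(x^{J_m,D})$, or $y^D$ is a lattice point of the appropriate fundamental polytope $\Pi^\rB_O(\om_k)$ or $\Pi^\rB_O(2\om_n)$ (the former two by the corollaries preceding Lemma~\ref{pipointdecompB} together with Proposition~\ref{imagexJD}, the latter by Corollary~\ref{spinpoints}), and (ii) the Minkowski-sum inclusion already established for $\supseteq$ transports these fundamental lattice points into $\Pi^\rB_O(\la)$ respectively $\Pi^\rB_O(\mu)$. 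The main potential obstacle, apart from the bookkeeping in assigning $D$, is precisely this converse direction to Lemma~\ref{pipointdecompB}, but it reduces entirely to the Minkowski-sum inclusion proved for $\supseteq$.
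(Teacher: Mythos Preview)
Your proposal is correct and matches the paper's approach: the paper first notes the polytope-level Minkowski identity $\Pi^\rB_O(\la+\mu)=\Pi^\rB_O(\la)+\Pi^\rB_O(\mu)$ (for which only your inclusion is actually needed) and then, exactly as you do, invokes Lemma~\ref{pipointdecompB} to decompose a given lattice point and split the resulting chain of ideals into two sub-chains with the correct multiplicities, handling the $D$-data just as you describe. Your write-up simply supplies the bookkeeping that the paper leaves to the reader.
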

\begin{proof}
Since $\Pi^\rB_O(\la+\mu)=\Pi^\rB_O(\la)+\Pi^\rB_O(\mu)$, we are to show that any $x\in \Pi^\rB_O(\la+\mu)\cap\bZ^{P\bs A}$ can be decomposed into a sum of two points, one in $\Pi^\rB_O(\la)\cap\bZ^{P\bs A}$ and the other in $\Pi^\rB_O(\mu)\cap\bZ^{P\bs A}$. However, this is seen directly from Lemma~\ref{pipointdecompB} by decomposing $x$ and then splitting the summands into two groups in the appropriate way.    
\end{proof}

\begin{example}\label{piBex}
Let $n=2$, $O=\{(1,1),(2,2),(1,-2)\}$. First consider $\cQ^\rB_O(\om_1)$. Its four vertices are the points $x^{J,\varnothing}$ with $J\in\cJ_1$: $(1,0,0,0,0,0)$, $(1,1,0,0,0,0)$, $(1,0,1,0,0,0)$, $(1,0,1,2,0,0)$ where $x\in\bR^P$ is written as $(x_{1,1},x_{1,2},x_{1,-2},x_{1,-1},x_{2,2},x_{2,-2})$. It will also have one more integer point: $x^{J,\{1\}}=(1,0,1,1,0,0)$ where $J=\{(1,j)\}_{j\in N}$. Meanwhile, $\cQ^\rB_O(\om_2)$ has five vertices: $(\frac12,0,0,0,\frac12,0)$, $(\frac12,0,\frac12,0,\frac12,0)$, $(\frac12,0,\frac12,1,\frac12,0)$, $(\frac12,0,\frac12,0,\frac12,1)$, $(\frac12,0,\frac12,1,\frac12,1)$; these are the $x^{J,\varnothing}/2$ with $J\in\cJ_2$. The last four form $\cQ^\rB_O(\om_2)\cap(\one_O/2+\bZ^P)$.

Next one checks that $r(1,1)=-2$, $r(1,2)=2$, $r(1,-2)=1$, $r(1,-1)=-1$, $r(2,2)=2$, $r(2,-2)=-2$ and, writing $y\in\bR^{P\bs A}$ as $(y_{1,2},y_{1,-2},y_{1,-1},y_{2,-2})$, derives 
\[\pi \xi(x_{1,1},x_{1,2},x_{1,-2},x_{1,-1},x_{2,2},x_{2,-2})=
(x_{1,2},x_{1,1}-x_{1,2}-x_{1,-2},x_{1,-1},x_{2,-2}).\]
One sees that $\Pi^\rB_O(\om_1)$ has vertices $(0,1,0,0)$, $(1,0,0,0)$, $(0,0,0,0)$, $(0,0,2,0)$ and one more integer point $(0,0,1,0)$. This matches $\dim V_{\om_1}=5$ (Corollary~\ref{pointsdim}). Meanwhile, $\Pi^\rB_O(\om_2)$ will have five vertices:
$(0,\frac12,0,0)$, $(0,0,0,0)$, $(0,0,1,0)$, $(0,0,0,1)$, $(0,0,1,1)$
and four integer points (the last four). This matches $\dim V_{\om_2}=4$. 
One may check that $\Pi^\rB_O(2\om_2)=2\Pi^\rB_O(\om_2)$ has $\dim V_{2\om_2}=10$ integer points and $\Pi^\rB_O(\om_1+\om_2)=\Pi^\rB_O(\om_1)+\Pi^\rB_O(\om_2)$ has $\dim V_{\om_1+\om_2}=16$ integer points (in the latter case every integer point is a sum of integer points in the summands in view of Corollary~\ref{minkowskiB}, in the former case this is not true).
\end{example}

\begin{remark}
Evidently, versions of Corollaries~\ref{pointsdim} and~\ref{minkowskiB} can also be stated for the type B poset polytopes $\cQ^\rB_O(\la)$ where instead of integer point sets one would consider intersections with the respective lattices from Proposition~\ref{latticesmap}.
\end{remark}

\subsection{Essential signatures}

In type B we first construct PBW-monomial bases while the existence of other objects is derived therefrom. We obtain these bases using the language of essential signatures due to Vinberg.

For $d\in\bZ_{\ge0}^{P\bs A}$ let $f^d$ denote the PBW monomial $\prod_{i,j}f_{i,j}^{d_{i,j}}\in\cU(\fg)$ ordered first by $i$ increasing from left to right and then by $j$ increasing with respect to $\lessdot$ from left to right. 

\begin{definition}
For a total group order $<$ on $\bZ^{P\bs A}$ and an integral dominant weight $\la$ we say that a vector (or \textit{signature}) $d\in \bZ_{\ge0}^{P\bs A}$ is \textit{essential} for $<$ and $\la$ if \[f^d v_\la\notin\spann\{f^{d'} v_\la|d'<d\}.\]
A PBW monomial $f^d$ is essential for $<$ and $\la$ if $d$ is.
\end{definition}

Evidently, for any total group order $<$ the set of vectors $f^dv_\la$ such that $d$ is essential for $<$ and $\la$ forms a basis in $V_\la$. The following is a key property of essential signatures, we give a proof since sources such as~\cite{Gor15,FFL3} place restrictions on the group order.
\begin{proposition}\label{semigroup}
If $d$ is essential for $<$ and $\la$ while $d'$ is essential for $<$ and $\mu$, then $d+d'$ is essential for $<$ and $\la+\mu$.
\end{proposition}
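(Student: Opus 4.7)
The approach is the standard one using the Cartan embedding $V_{\la+\mu}\hookrightarrow V_\la\otimes V_\mu$ sending $v_{\la+\mu}\mapsto v_\la\otimes v_\mu$, together with the Leibniz/coproduct expansion of PBW monomials on the tensor product. Argue by contradiction: suppose $d+d'$ is not essential for $<$ and $\la+\mu$, so that
\[
f^{d+d'}v_{\la+\mu}=\sum_{\tilde d<d+d'}c_{\tilde d}f^{\tilde d}v_{\la+\mu}
\]
for some scalars $c_{\tilde d}$. Transport this identity into $V_\la\otimes V_\mu$ via the embedding.

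The first computational input is the coproduct formula: since the $f_{i,j}$ appearing in a fixed-order PBW monomial each satisfy $\Delta(f_{i,j}^{m})=\sum_{k}\binom{m}{k}f_{i,j}^k\otimes f_{i,j}^{m-k}$ and the tensor factors commute past one another when multiplied, one gets $\Delta(f^e)=\sum_{a+b=e}\binom{e}{a}f^a\otimes f^b$ (with multi-binomial coefficient $\binom{e}{a}=\prod_{(i,j)}\binom{e_{i,j}}{a_{i,j}}$). Hence on $v_\la\otimes v_\mu$ the identity becomes
\[
\sum_{e+e'=d+d'}\binom{d+d'}{e}f^ev_\la\otimes f^{e'}v_\mu=\sum_{\tilde d<d+d'}c_{\tilde d}\sum_{e+e'=\tilde d}\binom{\tilde d}{e}f^ev_\la\otimes f^{e'}v_\mu.
\]

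The second input is a general property of essential sets: if $E_\la\subset\bZ_{\ge 0}^{P\bs A}$ denotes the essential signatures for $<$ and $\la$, then $\{f^av_\la\mid a\in E_\la\}$ is a basis of $V_\la$, and for any $e$ one may write $f^ev_\la=\sum_{a\in E_\la,\,a\le e}\alpha^\la_{e,a}f^av_\la$; in particular, if $e$ is essential then $\alpha^\la_{e,e}=1$. This upper-triangularity is obtained by choosing the maximal $a$ appearing in an expansion $f^ev_\la=\sum\alpha^\la_{e,a}f^av_\la$ that violates $a\le e$ and deriving a contradiction to essentiality of $a$ from its definition. Analogously for $\mu$.

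Now compare the coefficients of the basis vector $f^dv_\la\otimes f^{d'}v_\mu$ of $V_\la\otimes V_\mu$ on both sides. On the left only pairs $(e,e')$ with $e+e'=d+d'$, $e\ge d$, $e'\ge d'$ contribute; since $<$ is a total group order compatible with addition, such a pair must satisfy $e=d$ and $e'=d'$, giving coefficient $\binom{d+d'}{d}\cdot\alpha^\la_{d,d}\alpha^\mu_{d',d'}=\binom{d+d'}{d}\neq 0$. On the right, the analogous condition would force $\tilde d=e+e'\ge d+d'$, contradicting $\tilde d<d+d'$, so the coefficient vanishes. This contradiction completes the proof.

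\textbf{Main obstacle.} The real content is the upper-triangular expansion of arbitrary $f^ev_\la$ in the essential basis with $a\le e$; everything else is bookkeeping with the coproduct and with the translation-invariance of $<$. It is important that $<$ be a \emph{group} order rather than merely a monomial order, which is precisely what allows the comparison $e\ge d,e'\ge d',e+e'=d+d'\Rightarrow e=d,e'=d'$.
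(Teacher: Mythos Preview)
Your proof is correct and follows essentially the same approach as the paper: both use the Cartan embedding $V_{\la+\mu}\hookrightarrow V_\la\otimes V_\mu$ and the Leibniz expansion of $f^c(v_\la\otimes v_\mu)$ into summands $(f^{c_1}v_\la)\otimes(f^{c_2}v_\mu)$ with $c_1+c_2=c$, together with the fact that a total group order forces $e=d$, $e'=d'$ whenever $e\ge d$, $e'\ge d'$ and $e+e'=d+d'$. The only stylistic difference is that the paper works with the filtration subspaces $V_\nu[{<}c]$ and the subspace $U_{d,d'}=V_\la[{<}d]\otimes V_\mu+V_\la\otimes V_\mu[{<}d']$, reducing the conclusion to the linear-algebra fact that $u\otimes v\notin U\otimes V_\mu+V_\la\otimes V$ when $u\notin U$ and $v\notin V$; this sidesteps the need to state the upper-triangular expansion in the essential basis explicitly, though of course that expansion is equivalent to your formulation.
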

\begin{proof}
For $c\in\bZ_{\ge0}^{P\bs A}$ and integral dominant $\nu$ let $V_\nu[{<}c]\subset V_\nu$ denote the subspace spanned by all vectors $f^{c'}v_\nu$ with $c'<c$. We are to show that $f^{d+d'}v_{\la+\mu}\notin V_{\la+\mu}[{<}d+d']$. 

Let us view $V_{\la+\mu}$ as a submodule of $V_\la\otimes V_\mu$ so that $v_{\la+\mu}$ coincides with $v_\la\otimes v_\mu$. Then \[V_{\la+\mu}[{<}d+d']\subset U_{d,d'}= V_\la[{<}d]\otimes V_\mu+V_\la\otimes V_\mu[{<}d'].\] Indeed, for $c<d+d'$ we can expand $f^c(v_\la\otimes v_\mu)$ into a sum of vectors of the form $(f^{c_1}v_\la)\otimes(f^{c_2}v_\mu)$ with $c_1+c_2=c$ and, therefore, $c_1<d$ or $c_2<d'$. 

Now, if we expand $f^{d+d'}(v_\la\otimes v_\mu)$ in similar fashion, then we will obtain a sum of $(f^dv_\la)\otimes(f^{d'}v_\mu)$ and other products $(f^{c_1}v_\la)\otimes(f^{c_2}v_\mu)$ with $c_1<d$ or $c_2<d'$. Thus, \[f^{d+d'}v_{\la+\mu}\in(f^dv_\la)\otimes(f^{d'}v_\mu)+U_{d,d'}\] and it remains to show that $(f^dv_\la)\otimes(f^{d'}v_\mu)\notin U_{d,d'}$. However, this follows from $f^dv_\la\notin V_\la[{<}d]$ and $f^{d'}v_\mu\notin V_\mu[{<}d']$, i.e. the essentiality of $d$ and $d'$.
\end{proof}

\subsection{PBW-monomial bases}

We now fix a total group order $<$ and simply write ``essential for $\la$'' presuming $<$. For $d\in\bZ^{P\backslash A}$ set $d(i)=\sum_{|j|>i}d_{i,j}+d_{i,-i}/2$.
\begin{definition}\label{monorder}
First let us define a total order on $P\bs A$. Every $(i,j)\in P\bs A$ can be uniquely expressed as $(i,r(i,j'))$. We set $(i_1,r(i_1,j_1))<(i_2,r(i_2,j_2))$ if $i_1<i_2$ or ($i_1=i_2$ and $j_1\gtrdot j_2$) with one exception.
If $i_1=i_2$, $j_1\gtrdot j_2$, $(i_2,j_2)\in O$ and there is no $j_2\lessdot j'\ledot j_1$ with $(i_1,j')\in O$, we set $(i_1,r(i_1,j_1))>(i_2,r(i_2,j_2))$.

Now consider distinct $d,d'\in\bZ^{P\bs A}$. Consider the $<$-minimal $(i,j)$ such that $d_{i,j}\neq d'_{i,j}$. If $d'(i)<d(i)$, we set $d'<d$. If $d'(i)=d(i)$, we set $d'<d$ when $d'_{i,j}>d_{i,j}$.
\end{definition}

In other words, the order $<$ on elements $(i,j)$ with a fixed $i$ is reverse to the one in Definition~\ref{monorderC}. Note, however, that for the $\lessdot$-maximal $l$ with $(i,l)\in O$ we have $r(i,l)=i$, hence $(i,r(i,l))\notin P\bs A$ and is excluded from the order instead of being $<$-minimal among the $(i,j)$. Instead, the $<$-minimal $(i,j)$ is $(i,r(i,-i))=(i,-i)$.

The main result of this subsection is as follows.

\begin{theorem}\label{essentialB}
For integral dominant $\la$ a signature is essential for $\la$ if and only if it lies in $\Pi^\rB_O(\la)$. Consequently, the vectors $f^yv_\la$ with $y\in\Pi^\rB_O(\la)\cap\bZ^{P\bs A}$ form a basis in $V_\la$.
\end{theorem}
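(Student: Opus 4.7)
Since essential signatures for $<$ always form a basis of $V_\la$ and Corollary~\ref{pointsdim} gives $|\Pi^\rB_O(\la)\cap\bZ^{P\bs A}|=\dim V_\la$, both claims of the theorem will follow once we prove that every $y\in\Pi^\rB_O(\la)\cap\bZ^{P\bs A}$ is essential for $\la$.

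The first step is to reduce to fundamental weights. By Lemma~\ref{pipointdecompB}, any lattice point $y$ decomposes canonically as a sum whose summands lie in $\Pi^\rB_O(\om_k)\cap\bZ^{P\bs A}$ for $k\le n-1$, in $\Pi^\rB_O(2\om_n)\cap\bZ^{P\bs A}$, or (when $a_n$ is odd) in $\Pi^\rB_O(\om_n)\cap\bZ^{P\bs A}$. By Proposition~\ref{semigroup} (additivity of essential signatures), essentiality of $y$ for $\la$ then follows from essentiality of each summand at the corresponding fundamental weight, so it suffices to establish the theorem at $\om_1,\dots,\om_{n-1}$, $2\om_n$, and $\om_n$.

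For $\om_k$ with $k\le n-1$ and for $2\om_n$ the representations $V_{\om_k}\subset\wedge^k V$ and $V_{2\om_n}=\wedge^n V$ are accessible via the matrix realization~\eqref{matrixform}. Here I expect the argument to mirror Lemma~\ref{fundamentalC}(a) from type C: expand each $f_{i,j}$ into matrix units $E_{a,b}$, apply the Leibniz rule to $f^d v$, and isolate the distinguished basis multivector of smallest $\ll$-weight in the resulting expansion. By Proposition~\ref{imagexJD} this distinguished multivector is a faithful invariant of $d=\pi\xi(x^{J,D})$, and a $\ll$-compatibility argument for $<$ upgrades the distinguished-multivector property to essentiality. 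The extra coordinate-$2$ versus coordinate-$1$ distinction at positions of $B$ (according to $D$) is handled by the coefficient $2$ appearing in $f_{i,-i}=E_{0,i}-2E_{-i,0}$.

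The main obstacle is the spin representation $V_{\om_n}$, which does not lift to $\fgl(V)$, so the exterior-power argument is unavailable. By Corollary~\ref{spinpoints} the relevant lattice points are the $y^D$ for $D\subset[1,n]$, with $f^{y^D}v_{\om_n}=\prod_{i\in D}f_{i,-i}v_{\om_n}$ spanning the one-dimensional weight space of weight $\om_n-\sum_{i\in D}\varepsilon_i$. Essentiality of $y^D$ is therefore equivalent to the claim that no $d'<y^D$ satisfies $\wt(f^{d'})=\wt(f^{y^D})$. Writing the coefficient of $\varepsilon_l$ in $\wt(f^{d'})$ as $d'(l)+d'_{l,-l}/2+\sum_{i<l}d'_{i,-l}-\sum_{i<l}d'_{i,l}$, one sees that the particular form of Definition~\ref{monorder}, with its reversed ordering on $j$-coordinates within each $i$ and its exception clause, is exactly calibrated to this weight computation; the combinatorial verification that weight matching forces $d'>y^D$ is what I anticipate as the most delicate step and the reason for the specific shape of $<$.
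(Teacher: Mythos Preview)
Your overall architecture is right: the counting via Corollary~\ref{pointsdim}, the reduction to base cases via Lemma~\ref{pipointdecompB} and Proposition~\ref{semigroup}, and the split into $\om_k$ ($k\le n-1$), $2\om_n$, $\om_n$ all match the paper. However, you have inverted the difficulty of the two base cases, and the $\wedge^k V$ case contains a real gap.

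For the spin representation the argument is short, not delicate. Since each weight space is one-dimensional and $f^{y^D}v_{\om_n}$ is a nonzero multiple of the weight vector $v_D$, one only needs: if $f^d$ has weight $-\sum_{i\in D}\varepsilon_i$ and $d\neq y^D$, then $d>y^D$. Taking the $<$-minimal $(i,j)$ where $d$ and $y^D$ differ, either $j\neq -i$ and then $d(i)\ge 1>y^D(i)$, or $j=-i$ and a direct weight computation forces $d(i)>y^D(i)$. No intricate calibration of the exception clause in Definition~\ref{monorder} is needed here.

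The genuine gap is in $\wedge^k V$. Your expectation that the argument ``mirrors Lemma~\ref{fundamentalC}(a)'' does not go through, and the coefficient $2$ in $f_{i,-i}=E_{0,i}-2E_{-i,0}$ does not help. In type C one had $f_{i,-i}=2E_{-i,i}$, a single matrix unit, so every $f_{i,j}$ expanded as $E_{j,i}\pm E_{-i,-j}$ with a clear $\ll$-minimal summand. In type B the two summands $E_{0,i}$ and $E_{-i,0}$ of $f_{i,-i}$ are not comparable by any $GL(V)$-weight order: one introduces the subscript $0$ and the other removes it, so which one acts nontrivially on a given multivector depends on the history of the product. Consequently there is no uniform ``smallest-weight'' choice of distinguished multivector. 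The paper instead constructs, for each $(J,D)$, an explicit product $C_{J,D}$ (Definition~\ref{ejddef}) in which the choice between $E_{j,i}$ and $E_{-i,-j}$, and between $E_{0,i}$ and $E_{-i,0}$, is made according to the \emph{parity} of $|\{i'\in D:i'>i\}|$. The resulting multivector $e_{J,D}$ is described combinatorially (Proposition~\ref{ejd}, using Lemma~\ref{between}), and Proposition~\ref{baseB} proves by a lengthy induction on $i$ that $e_{J,D}$ occurs in $f^{\pi\xi(x^{J,D})}e_{1,\dots,k}$ but in no $f^{d'}e_{1,\dots,k}$ with $d'<\pi\xi(x^{J,D})$. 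This parity-governed construction and the case analysis in Proposition~\ref{baseB} are the missing ideas; a $\ll$-compatibility argument alone does not reach them.
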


Theorem~\ref{essentialB} is proved by induction on $\la$ using Proposition~\ref{semigroup} and Corollary~\ref{minkowskiB}. The most difficult part of the proof is the induction base, i.e.\ the case of fundamental weights and $2\om_n$. We next give a series of preparatory definitions and statements. We make use of the isomorphism $V_{\om_k}\simeq\wedge^k V$ and consider the actions of essential monomials in terms of multivectors. 

In a PBW monomial $f^d$ we may express every root vector as a sum of two $E_{a,b}$ according to~\eqref{matrixform} and expand this product into a sum of products of the $E_{a,b}$ which can be viewed as elements of $\cU(\fgl_{2n+1}(\bC))$. If a product $E_{a_1,b_1}\dots E_{a_m,b_m}$ occurs in this expansion we say that it \textit{occurs} in $f^d$ (note that the summands in the expansion are pairwise distinct). We also say that $e_{i_1,\dots,i_k}$ \textit{occurs} in $v\in\wedge^k V$ if it appears with a nonzero coefficient in the expression of $v$ in the multivector basis.

If $\{i_1,\dots,i_k\}$ contains $b$ but not $a$, the matrix $E_{a,b}$ maps $e_{i_1,\dots,i_k}$ to $e_{j_1,\dots,j_k}$ where $j_1,\dots,j_k$ is obtained from $i_1,\dots,i_k$ by replacing $b$ with $a$. Otherwise $E_{a,b}e_{i_1,\dots,i_k}=0$. Hence, every product of the $E_{a,b}$ maps the highest weight vector $e_{1,\dots,k}$ to another multivector or to 0. In particular, if $e_{i_1,\dots,i_k}$ occurs in $f^d e_{1,\dots,k}$, then we must have a product $C=E_{a_1,b_1}\dots E_{a_m,b_m}$ such that $C$ occurs in $f^d$ and $Ce_{1,\dots,k}=\pm e_{i_1,\dots,i_k}$.


To prove that $\Pi^\rB_O(\om_k)\cap\bZ^{P\bs A}$ is the set of essential signatures for $\om_k$ we will use the following approach. For every $d$ in this set (that is $d=\pi\xi(x^{J,D})$ for some $J\in\cJ_k$) we choose a distinguished product $C_{J,D}$ occurring in $f^d$ in such a way that $C_{J,D}e_{1,\dots,k}$ occurs in $f^de_{1,\dots,k}$ but not in any $f^{d'}e_{1,\dots,k}$ with $d'<d$.

\begin{definition}\label{ejddef}
For $k\in[1,n]$ and a point $x^{J,D}$ with $J\in\cJ_k$ consider $i\in[1,k]$ and $j=w^{O,J}(i)$. Let $r$ denote the number of $i'>i$ with $i'\in D$. Set 
\[
E^{J,D}_i=
\begin{cases}
E_{j,i}&\text{ if }|j|>i\text{ and }r\text{ is even,}\\
E_{-i,-j}&\text{ if }|j|>i\text{ and }r\text{ is odd,}\\
E_{0,i}&\text{ if }i=-j,\, i\in D\text{ and }r\text{ is even,}\\
E_{-i,0}&\text{ if }i=-j,\, i\in D\text{ and }r\text{ is odd,}\\
E_{-i,0}E_{0,i}&\text{ if }i=-j,\, i\notin D\text{ and }r\text{ is even,}\\
E_{0,i}E_{-i,0}&\text{ if }i=-j,\, i\notin D\text{ and }r\text{ is odd,}\\
1&\text{ if }i=j.
\end{cases}
\]
Let $C_{J,D}\in\cU(\fgl_{2n+1}(\bC))$ denote the PBW monomial $\prod_{i=1}^k E^{J,D}_i$ ordered by $i$ increasing from left to right. Let $e_{J,D}$ denote the image $C_{J,D}e_{1,\dots,k}$.
\end{definition}

\begin{example}\label{ejdexample}
Consider the case $n=4, O=\{(1,1),(1,-3),(2,2),(2,-3),(3,3),(4,4)\}$. Consider $J=\langle 1,-1\ar\cup\al3,-3\rangle\in\mathcal{J}_3$. We have
\[w^{O,J}(1)=-1,w^{O,J}(2)=-3,w^{O,J}(3)=3,w^{O,J}(4)=-4.\]
Hence, $C_{J,D}$ and $e_{J,D}$ can be written out as below. Here note that the $e_{J,D}$ are pairwise distinct, this actually holds for all possible $e_{J,D}$ (Corollary~\ref{bijection}). Also, the reader may compare this example to the general description of $e_{J,D}$ given by Proposition~\ref{ejd}.
\begin{align*}
C_{J,\varnothing}=E_{-1,0}E_{0,1}E_{-3,2}E_{-4,0}E_{0,4},&~e_{J,\varnothing}=e_{-1,-3,3,-4};\\
C_{J,\{1\}}=E_{0,1}E_{-3,2}E_{-4,0}E_{0,4},&~e_{J,\{1\}}=e_{0,-3,3,-4};\\
C_{J,\{4\}}=E_{0,1}E_{-1,0}E_{-2,3}E_{0,4},&~e_{J,\{3\}}=e_{0,2,-2,-1};\\
C_{J,\{1,4\}}=E_{-1,0}E_{-2,3}E_{0,4},&~e_{J,\{1,3\}}=e_{1,2,-2,-1}.
\end{align*}
\end{example}

First of all, let us point out that $C_{J,D}$ indeed occurs in $f^d$ where $d={\pi\xi(x^{J,D})}$, since it is obtained by choosing one of the two summands in~\eqref{matrixform} for every $f_{i,j}$ in the product. Now, by Proposition~\ref{imagexJD} every $f_{i,j}$ with $|j|>i$ occurs in $f^d$ in degree 0 or 1 while $f_{i,-i}$ can occur in degrees 0, 1 and 2. Also note that at most one of $E_{0,i}$ and $E_{-i,0}$ acts nontrivially on any multivector. This allows us to describe $C_{J,D}$ in the following way. For a root vector $f_{i,j}$ with $|j|>i$ appearing in $f^d$ we choose the first summand in~\eqref{matrixform} if the number of root vectors of the form $f_{i',-i'}$ appearing in $f^d$ in degree 1 and to the right of $f_{i,j}$ is even. We choose the second summand if this number is odd. For a root vector $f_{i,-i}$ appearing in $f^d$ we choose the only summand which acts nontrivially considering the choices we have made for factors to the right. The latter is possible and $e_{J,D}$ is indeed a nonzero multivector in view of Proposition~\ref{ejd} which describes $e_{J,D}$ explicitly. First, however, we prove a lemma.

\begin{lemma}\label{between}
Consider $J\in\cJ_k$, let $D$ denote the set of $i$ for which $(i,-i)\in J$. For $i\in D$ one has $w^{O,J}(i)=-i$. Furthermore, for $i_1,i_2\in D\cup\{0,n+1\}$ and $i_1<i<i_2$ one has $i_1<|w^{O,J}(i)|<i_2$.
\end{lemma}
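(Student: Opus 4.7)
Both parts will be read off from the pipe-dream description. Recall from Proposition~\ref{wrproperties}(a) that $w^{O,J}(i)=r(i,j_i)$ where $j_i$ is the $\lessdot$-maximum $j$ with $(i,j_i)\in M_O(J)$, and that $r(i,j)=w^{O,\langle i,j\rangle}(i)$ compares the $i$th pipe of the dream of $M_O(\langle i,j\rangle)$ with the dream of $O$.

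For Claim 1, the element $(i,-i)$ is $\prec$-maximal in all of $P$, so $(i,-i)\in J$ forces $(i,-i)\in\max_\prec J\subset M_O(J)$, and since $-i$ is $\lessdot$-maximum in row $i$ we get $j_i=-i$ and $w^{O,J}(i)=r(i,-i)$. Now I would compare the $i$th pipe of $M_O(\langle i,-i\rangle)$ with the $(-i)$th pipe of $O$. The former enters $(i,-i)$ heading lu, turns immediately (since $(i,-i)\in M_O(\langle i,-i\rangle)$) and exits heading ld; the latter enters $(i,-i)$ heading ld and, because $B\cap O=\varnothing$ means $(i,-i)\notin O$, passes through without turning and also exits heading ld. From $(i,-i)$ onward each trajectory is strictly $\prec$-decreasing and so stays inside $\langle i,-i\rangle\setminus\{(i,-i)\}$. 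On this region the two pipe-dream data coincide: every $(i',j')\prec(i,-i)$ lies in $\langle i,-i\rangle$ but not in $\max_\prec\langle i,-i\rangle$, so $(i',j')\in M_O(\langle i,-i\rangle)$ iff $(i',j')\in O$. The pipes therefore trace identical paths and end at a common row-$1$ element, giving $r(i,-i)=-i$.

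For Claim 2, the lower bound is immediate: Proposition~\ref{wrproperties}(c) gives $|w^{O,J}(i)|\ge i>i_1$. For the upper bound assume $i_2\le n$. Both $(i,-i_2)$ and $(i+1,-i_2)$ lie in $J$ (as $(i_2,-i_2)\in J$ and $J$ is an order ideal); since $(i+1,-i_2)\succ(i,-i_2)$, the latter is not $\prec$-maximal in $J$, so $(i,-i_2)\in M_O(J)$ iff $(i,-i_2)\in O$. Also $i\notin D$ forces $(i,-i)\notin M_O(J)$, hence $j_i\lessdot -i$. I would then split by $j_i$: if $j_i=-l$ with $l<i_2$, Proposition~\ref{wrproperties}(e) yields $|w^{O,J}(i)|\le l<i_2$ at once; if $j_i=-i_2$ then $(i,-i_2)\in O$ and Proposition~\ref{wrproperties}(f), applied with the $\lessdot$-minimal $j'\gtrdot -i_2$ satisfying $(i,j')\in O$, gives $|w^{O,J}(i)|\le|j'|<i_2$. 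The residual cases (no such $j'$ exists, $j_i\lessdot -i_2$, or $j_i$ positive) would be handled by direct pipe tracing: one follows the $i$th pipe of $M_O(\langle i,j_i\rangle)$ and exploits the agreement of $M_O(\langle i,j_i\rangle)$ with $O$ on $\langle i,j_i\rangle\setminus\{(i,j_i)\}$ to identify the matching $O$-pipe.

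The main obstacle is this last step, namely verifying that the matching $O$-pipe cannot be labeled $\pm m$ for $m\ge i_2$. The plan is to invoke Claim 1 applied to $i_2\in D$: the common trajectory of the pipes $i_2$ of $M_O(J)$ and $-i_2$ of $O$ descends along the column $j=-i_2$ and acts as a \emph{barrier} separating the pipe-dream plane, so a pipe originating from the region $|j|<i_2$ cannot cross it without producing an inadmissible configuration. Formalising this non-crossing argument is the delicate point of the proof.
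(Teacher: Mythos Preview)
Your argument for the first claim and for the lower bound in the second claim is correct and essentially matches the paper. The difficulty is entirely in your ``residual cases'' for the upper bound, and there the proposal has a genuine gap.

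First, the barrier picture you sketch rests on a false premise: the common trajectory of the $i_2$th pipe of $M_O(J)$ and the $(-i_2)$th pipe of $O$ does \emph{not} in general descend along the column $j=-i_2$. That pipe enters $(i_2,-i_2)$ heading ld and then turns at every element of $O$ it meets; as soon as some $(l,-i_2)$ with $l<i_2$ lies in $O$ the trajectory leaves that column. So the barrier is not a straight wall, and the non-crossing argument you flag as ``delicate'' would have to be substantially more intricate than stated.

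Second, and more importantly, no barrier argument is needed. The cleaner organization is to split not on the value of $j_i$ but on whether $(i,j_i)\in O$. If $(i,j_i)\notin O$ then $(i,j_i)\in\max_\prec J$; since $(i_2,-i_2)\in\max_\prec J$ as well and the two are incomparable with $i<i_2$, one is forced to have $-i_2\lessdot j_i$, and Proposition~\ref{wrproperties}(e) finishes. If $(i,j_i)\in O$, take the $\lessdot$-minimal $j'\gtrdot j_i$ with $(i,j')\in O$; if it exists then $(i,j')\notin J$ (else it would lie in $M_O(J)$, contradicting maximality of $j_i$), while $(i,-i_2)\in J$ since $(i,-i_2)\preceq(i_2,-i_2)$, so $j'\gtrdot -i_2$ and Proposition~\ref{wrproperties}(f) gives $|r(i,j_i)|\le|j'|<i_2$. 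If no such $j'$ exists then $j_i$ is $\lessdot$-maximal in row $i$ of $O$, and one checks directly that $r(i,j_i)=i$. This covers \emph{all} of your residual cases ($j_i$ positive, $j_i\lessdot-i_2$, or $j_i=-i_2$ with no $j'$) at once, using only the tools you already invoked.
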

\begin{proof}
We have seen that $r(i,-i)=-i$. For $i\in D$ we have $(i,-i)\in M_O(J)$ and $w^{O,J}(i)=r(i,-i)$ by Proposition~\ref{wrproperties}\ref{wOisr}, this implies the first claim.

For the second claim recall that $w^{O,J}(i)=r(i,j)$ for the $\lessdot$-maximal $j$ such that $(i,j)\in M_O(J)$. We get $|r(i,j)|\ge i>i_1$ from Proposition~\ref{wrproperties}\ref{rbound1}. 

Now first suppose that $(i,j)\notin O$. Then $(i,j)\in\max_\prec J$ and, since $(i_1,-i_1),(i_2,-i_2)\in\max_\prec J$, we must have $-i_2\lessdot j\lessdot -i_1$. We deduce $|r(i,j)|\le -j<i_2$ from Proposition~\ref{wrproperties}\ref{rbound1}. If, however, $(i,j)\in O$, then we have two possibilities. First: there exists a $j'\gtrdot j$ with $(i,j')\in O$ which implies $(i,j')\notin J$. Hence $-i_2\lessdot j'\lessdot -i_1$ and Proposition~\ref{wrproperties}\ref{rbound2} provides $|r(i,j)|\le -j'<i_2$. Second: no such $j'$ exists and $r(i,j)=i$.
\end{proof}

In other words, the above proposition shows that for $i_1,i_2\in D\cup\{0,n+1\}$ with $i_1<i_2$ the image $w^{O,J}(]i_1,i_2[)$ consists of numbers whose absolute value is again in $]i_1,i_2[$ (we write $]i_1,i_2[$ for the set of all integers $i_1<i<i_2$ and similarly use the notations $[i_1,i_2[$ and $]i_1,i_2]$). In particular, when $i_2\neq n+1$ precisely half of the numbers with absolute value in $]i_1,i_2[$ appear in the image $w^{O,J}([1,k])$.

\begin{proposition}\label{ejd}
Consider $x^{J,D}$ with $J\in\cJ_k$ and $D=\{i_1>\dots>i_l\}$, additionally denote $i_{l+1}=0$. Then the vector $e_{J,D}$ is a nonzero multivector. Furthermore, let $K\subset[-n,n]$ with $|K|=k$ be the set of subscripts of this multivector. Then $j\in K$ if and only if one of the following holds.
\begin{enumerate}[label=(\roman*)]
\item $j\in w^{O,J}(]i_1,k])$ or
\item $j\in w^{O,J}(]i_{2r+1},i_{2r}[)$ for some $r\in[1,\lfloor \frac l2\rfloor]$ or
\item $|j|\in]i_{2r},i_{2r-1}[$ for some $r\in[1,\lfloor \frac{l+1}2\rfloor]$ and $j\neq -w^{O,J}(i)$ for any $i\in]i_{2r+1},i_{2r}[$ or
\item $j=\pm i_{2r}$ for some $r\in[1,\lfloor \frac l2\rfloor]$ or
\item $j=0$ and $l$ is odd.
\end{enumerate}
\end{proposition}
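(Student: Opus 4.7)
The approach is to compute the action of $C_{J,D}$ on $e_{1,\dots,k}$ directly, applying its factors $E^{J,D}_i$ in decreasing order of $i$: in the product $\prod_{i=1}^{k} E^{J,D}_i$, ordered by $i$ increasing left-to-right, the factor with the largest $i$ acts first on the vector. It is convenient to group these factors into \emph{blocks} determined by $D=\{i_1>\dots>i_l\}$: setting $i_0=k$, for $s\in[0,l]$ block $s$ consists of the open interval $]i_{s+1},i_s[$ together with the endpoint $i_s$ when $s\ge 1$. Within the open part of block $s$ the parity $r$ of Definition~\ref{ejddef} equals $s$, while at $i=i_s$ it equals $s-1$. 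Consequently, even blocks use the factors $E_{w^{O,J}(i),i}$ sending $i\mapsto w^{O,J}(i)$, odd blocks use $E_{-i,-w^{O,J}(i)}$ sending $-w^{O,J}(i)\mapsto-i$, and each boundary factor $E^{J,D}_{i_s}$ toggles between $i_s$ and $0$ in the support.

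The proof would proceed by establishing the inductive invariant: after applying all factors of $C_{J,D}$ with index $i>i_{s+1}$ the current vector is a nonzero scalar multiple of $e_{K_s}$, where $K_s$ is obtained from the proposition's description of $K$ by restricting the block contributions to $s'\le s$ and adjoining the still-unprocessed initial indices $[1,i_{s+1}]$. The key combinatorial input is Lemma~\ref{between}: on each open block $]i_{s+1},i_s[$ the absolute values $\{|w^{O,J}(i)|\}$ exhaust exactly half of the integers in that interval. This ``half--complementary half'' structure is what guarantees that each odd-block factor $E_{-i,-w^{O,J}(i)}$ finds its target $-w^{O,J}(i)$ in the current support, either as a positive integer still surviving from $[1,k]$ or as a negative index placed there earlier in the same block by a two-factor expression.

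The main technical obstacle is bookkeeping the two-factor expressions $E_{-i,0}E_{0,i}$ and $E_{0,i}E_{-i,0}$ which occur when $w^{O,J}(i)=-i$ with $i\notin D$. Each acts nontrivially only on a specific configuration of $\{0,i,-i\}$ in the current support, and their net effect is to swap $i\leftrightarrow -i$ while using $0$ as a temporary buffer. Carefully tracking which sign of each absolute value $|j|\in\,]i_{s+1},i_s[$ survives the block accounts for the exclusion clause in condition (iii) of the proposition. Once the invariant has been propagated through $s=l$, reading off $K_l$ and grouping its elements by source reproduces conditions (i)--(v) exactly: (i) and (ii) record the images of even blocks under $w^{O,J}$ (the top block and the inner even blocks), (iii) records the indices produced by odd blocks together with the restriction capturing the ``consumed'' half, (iv) records the $\pm i_{2r}$ left by boundary factors at even parity transitions, and (v) records the residual $0$, which survives iff $l$ is odd. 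The nonzero scalar accumulated throughout the process confirms that $e_{J,D}$ is a nonzero multivector.
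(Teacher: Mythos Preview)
Your approach is essentially identical to the paper's: both arguments process the factors $E^{J,D}_i$ block by block according to the elements of $D$, invoke Lemma~\ref{between} to confine $|w^{O,J}(i)|$ to the current block and thereby guarantee that each factor acts nontrivially, and track the evolving subscript set through the parity alternation to read off conditions (i)--(v). One minor bookkeeping slip: with your convention $i_0=k$ and block $0=\,]i_1,i_0[$ you omit the index $i=k$ itself from the top block, whereas the factor $E^{J,D}_k$ must be applied first; the paper handles this by taking the top range as the half-open interval $]i_1,k]$.
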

\begin{proof}
By Lemma~\ref{between} we have $|w^{O,J}(i)|>i_1$ if and only if $i>i_1$. By the same proposition for $s\in[0,l-1]$ we have $|w^{O,J}(i)|\in ]i_{s+1},i_s[$ if and only if $i\in ]i_{s+1},i_s[$. Hence, the number of $j$ satisfying condition (i) is $k-i_1$ and the number of $j$ satisfying condition (ii) for a given $r$ is $i_{2r}-i_{2r+1}-1$ while for condition (ii) it is $i_{2r-1}-i_{2r}-1$. We see that the total number of $j$ satisfying (i), (ii) or (iii) is $k-l$. Since conditions (iv) and (v) provide $l$ further numbers, we deduce that in total precisely $k$ distinct $j\in[-n,n]$ satisfy one of the conditions and the first claim follows from the second.

Now for $s\in [1,k]$ denote \[e_{J,D}^s=E^{J,D}_s\dots E^{J,D}_k e_{1,\dots,k}\] so that $e_{J,D}^1=e_{J,D}$. Let $K_s$ denote the set of subscripts of the multivector $e_{J,D}^s$. Note that for $i\in]i_1,k]$ the element $E^{J,D}_i$ is equal to $E_{w^{O,J}(i),i}$, $E_{-i,0}E_{0,i}$ or $1$. In all three cases $E^{J,D}_i$ acts by replacing the subscript $i$ with $w^{O,J}(i)$. Since $|w^{O,J}(i)|\ge i$ (Proposition~\ref{wrproperties}\ref{rijgei}), one sees that every $e_{J,D}^i$ with $i\in]i_1,k]$ is nonzero and $K_{i_1+1}$ is obtained from $[1,k]$ by replacing the subset $]i_1,k]$ with the image $w^{O,J}(]i_1,k])$. The set $K_{i_1}$ is obtained from $K_{i_1+1}$ by replacing $i_1$ with 0 since $E^{J,D}_{i_1}=E_{0,i_1}$. 

Next, for $i\in ]i_2,i_1[$ the element $E^{J,D}_i$ equals $E_{-i,-w^{O,J}(i)}$, $E_{0,i}E_{-i,0}$ or $1$ but in all three cases it replaces $-w^{O,J}(i)$ with $-i$ if possible. We claim that this is indeed always possible and hence each of the corresponding $e_{J,D}^i$ is nonzero. Suppose that, on the contrary, $i'\in]i_2,i_1[$ is the largest number for which $e_{J,D}^{i'}=0$. Since for $i\in]i',i_1[$ we have $|w^{O,J}(i)|\in[i,i_1[$, we see that $K_{i'+1}$ is the union of $[1,i']$, the image $w^{O,J}(]i_1,k])$, the singleton $\{0\}$ and the difference 
\[\big({[i'+1,i_1[}\cup{]-i_1,-i'-1]}\big)\bs \{-w^{O,J}(i)\}_{i\in[i'+1,i_1[}.\] 
In particular, $j$ with $|j|\in[i'+1,i_1[$ is not contained in $K_{i'+1}$ if and only $j=-w^{O,J}(i)$ for some $i\in[i'+1,i_1[$. Since $-w^{O,J}(i')$ is clearly not of this form, the remaining possibility is $|w^{O,J}(i)|=i$ but both $E_{0,i'}E_{-i',0}$ and $1$ act nontrivially on $e_{J,D}^{i'+1}$. This allows us to describe $K_{i_2+1}$ and $K_{i_2}$ differs from $K_{i_2+1}$ by replacing $0$ with $-i_2$.

By similar arguments we see that, more generally, 
\begin{itemize}
\item $K_{i_{2r+1}+1}$ differs from $K_{i_{2r}}$ by replacing $]i_{2r+1},i_{2r}[$ with $w^{O,J}(]i_{2r+1},i_{2r}[)$, 
\item $K_{i_{2r+1}}$ differs from $K_{i_{2r+1}+1}$ by replacing $i_{2r+1}$ with 0,
\item $K_{i_{2r+2}+1}$ differs from $K_{i_{2r+1}}$ by replacing $]i_{2r+2},i_{2r+1}[$ with 
\[\big({]i_{2r+2},i_{2r+1}[}\cup{]-i_{2r+1},-i_{2r+2}[}\big)\bs \{-w^{O,J}(i)\}_{i\in]i_{2r+2},i_{2r+1}[},\]
\item $K_{i_{2r+2}}$ differs from $K_{i_{2r+2}+1}$ by replacing 0 with $-i_{2r+2}$.
\end{itemize}
The proposition follows.
\end{proof}

Our choice of $e_{J,D}$ attaches a nonzero multivector occurring in $f^{\pi\xi(x^{J,D})} e_{1,\dots,k}$ to every lattice point $\pi\xi(x^{J,D})$ with $J\in\cJ_k$. We claim that this is a bijection between the set of lattice points in $\Pi^\rB_O(\om_k)$ (or $\Pi^\rB_O(2\om_n)$ if $k=n$) and the basis of multivectors in $\wedge^k V$. Next, one may order the points $\pi\xi(x^{J,D})$, $J\in\cJ_k$ according to $<$ which induces an ordering of the vectors $f^{\pi\xi(x^{J,D})}e_{1,\dots,k}$ and also of the multivectors $e_{J,D}$. These two orderings define a square matrix expressing the first set of vectors via the second set of vectors (the second set is a basis by the previous claim). We further claim that this matrix is triangular and nondegenerate which would then imply that the $f^{\pi\xi(x^{J,D})}e_{1,\dots,k}$ also form a basis in $\wedge^k V$. These claims are immediate from the following.
\begin{proposition}\label{baseB}
For a point $x^{J,D}$ with $J\in\cJ_k$ denote $d=\pi\xi(x^{J,D})$. Then the multivector $e_{J,D}$ occurs in $f^d e_{1,\dots,k}$ and does not occur in $f^{d'} e_{1,\dots,k}$ for any $d'<d$.
\end{proposition}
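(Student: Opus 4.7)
Plan: I would split the statement into existence (the multivector $e_{J,D}$ appears in $f^d e_{1,\ldots,k}$) and non-appearance ($e_{J,D}$ does not appear in $f^{d'} e_{1,\ldots,k}$ for any $d'<d$), and handle them in that order. For existence, $C_{J,D}$ is by construction one term in the distributive expansion of $f^d$ via the matrix formulas in \eqref{matrixform}, and $C_{J,D}e_{1,\ldots,k}=\pm e_{J,D}$ is nonzero by Proposition~\ref{ejd}. To rule out cancellation by other occurring products, I would verify that among the $E_{a,b}$-products arising in the expansion of $f^d$, $C_{J,D}$ is uniquely determined by the requirement that every factor act nontrivially on the multivector produced by the factors to its right; concretely, for each $f_{i,j}$ the choice of summand in \eqref{matrixform} is forced by the parity of the number of applications of $f_{i',-i'}$ with $i'>i$ placed to its right (those that have swapped subscript $i'$ with $-i'$ via an intermediate $0$). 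Since the choice is forced, the contribution to $e_{J,D}$ is a single nonzero term.

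For non-appearance, suppose for contradiction that $d'<d$ and $e_{J,D}$ appears in $f^{d'}e_{1,\ldots,k}$. The two vectors then lie in the same $\fh$-weight space, so $\sum d'_{i,j}\alpha_{i,j}=\sum d_{i,j}\alpha_{i,j}$. Let $(i_0,j_0)$ be the $<$-smallest element of $P\bs A$ at which $d$ and $d'$ disagree. Equating the coefficients of $\varepsilon_{i_0}$ in the weight identity, canceling the contributions from rows $i<i_0$ (where $d'$ and $d$ coincide), and using Proposition~\ref{imagexJD} to control the contributions from rows $i>i_0$, one obtains the identity $d'(i_0)=d(i_0)$, which rules out the first subcase in Definition~\ref{monorder}. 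In the remaining subcase $d'_{i_0,j_0}>d_{i_0,j_0}$, Proposition~\ref{imagexJD} implies that the only $j$ with $d_{i_0,j}\neq 0$ is $j=w^{O,J}(i_0)$, and the ordering in Definition~\ref{monorder} forces $(i_0,j_0)$ to be strictly $<$-smaller than $(i_0,w^{O,J}(i_0))$. Tracing the possible products $C'$ occurring in $f^{d'}$ and comparing the subscript sets they can produce with the explicit description of $e_{J,D}$ in Proposition~\ref{ejd} (together with Lemma~\ref{between}, which constrains where $w^{O,J}(i)$ can land for $i\in D$), one finds that the $j_0$-direction places the $i_0$-subscript outside the subscript set of $e_{J,D}$; any subsequent ``correction'' either exhausts more root vectors from row $i_0$ (contradicting $d'(i_0)=d(i_0)$) or conflicts with the already-fixed action on rows $i>i_0$.

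The main obstacle is the handling of the short root vectors $f_{i,-i}=E_{0,i}-2E_{-i,0}$, which act through the intermediate subscript $0$. A single application changes a subscript between $\pm i$ and $0$; to transform $i$ into $-i$ one needs the two-step composition $E_{-i,0}E_{0,i}$, which is why $d_{i,-i}$ enters $d(i)$ with coefficient $1/2$. This fractional bookkeeping must be propagated carefully through both the weight identity and the subcase analysis: when $d'_{i_0,-i_0}$ and $d_{i_0,-i_0}$ have opposite parities, a single extra or missing application of $f_{i_0,-i_0}$ shifts $d'(i_0)$ by only $1/2$, so one must verify that the trajectory of subscripts forced by $C'$ cannot both reach a valid multivector and preserve the equality $d'(i_0)=d(i_0)$. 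The verification ultimately reduces to the pipe-dream description of the turns of the $i_0$-th pipe of $M_O(J)$, mirroring the combinatorial arguments already used in Proposition~\ref{initialD} and Lemma~\ref{sagbiToMonomials}.
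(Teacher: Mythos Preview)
Your plan has the right ingredients but contains two genuine gaps and omits the main combinatorial step.

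\textbf{The weight argument does not give $d'(i_0)=d(i_0)$.} Equating $\varepsilon_{i_0}$-coefficients in $\sum d_{i,j}\alpha_{i,j}=\sum d'_{i,j}\alpha_{i,j}$ and cancelling rows $i<i_0$ yields $\sum_j d_{i_0,j}=\sum_j d'_{i_0,j}$, i.e.\ $d(i_0)+\tfrac12 d_{i_0,-i_0}=d'(i_0)+\tfrac12 d'_{i_0,-i_0}$. Since the short root $\alpha_{i_0,-i_0}=\varepsilon_{i_0}$ contributes with coefficient $1$, not $1/2$, you only recover the weaker relation above; it does not force $d'(i_0)=d(i_0)$. (Rows $i>i_0$ contribute nothing to the $\varepsilon_{i_0}$-coefficient at all, so that part of your sentence is moot.) In fact the paper never establishes equality: it uses only $d'(i)\le d(i)$, which follows directly from $d'\le d$ and Definition~\ref{monorder}.

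\textbf{Rows $i>i_0$ are not ``already fixed''.} The $<$-minimality of $(i_0,j_0)$ only tells you that $d$ and $d'$ agree on rows $i<i_0$; they may differ freely on rows $i>i_0$. The mechanism the paper uses instead is the following: having shown $C[{<}i]=C'[{<}i]$, one observes that $C[{>}(i-1)]e_{1,\ldots,k}$ and $C'[{>}(i-1)]e_{1,\ldots,k}$ are both mapped to $\pm e_{J,D}$ by the \emph{same} word $C[{<}i]$, hence must have identical subscript sets. This ``same subscript set'' constraint is what replaces any knowledge of $d'$ on rows $>i_0$, and it drives an induction on $i$ proving $C'=C_{J,D}$.

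\textbf{The core step is missing.} Even granting $d'(i)\le d(i)$ and the subscript-set constraint, one must still show that the factor of $C'$ in $\mathcal E_i$ is exactly $E^{J,D}_i$; in particular, when $|j|>i$ one must rule out every $j'\neq j=w^{O,J}(i)$ with $(i,j')\le(i,j)$. This is where the real work lies: the paper argues via pipe dreams that if $(i,j')<(i,j)$ then the $j'$th pipe of $O$ exits $J$, and combines this with Proposition~\ref{ejd} and Lemma~\ref{between} to show that $j'$ (or $-j'$) cannot appear among the subscripts of $e_{J,D}$ unless $j'=w^{O,J}(i')$ for some neighbouring $i'$, which is then excluded by an injectivity argument. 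Your sketch (``the $j_0$-direction places the $i_0$-subscript outside the subscript set'') does not engage with this and would not go through as stated.

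Finally, your existence argument is also incomplete: for $f_{i,j}$ with $|j|>i$, both summands $E_{j,i}$ and $E_{-i,-j}$ can act nontrivially on the same multivector, so the choice is not ``forced'' merely by nonvanishing. The paper handles existence and non-occurrence simultaneously by proving that \emph{any} product $C'$ occurring in some $f^{d'}$ with $d'\le d$ and $C'e_{1,\ldots,k}=\pm e_{J,D}$ must equal $C_{J,D}$; this single statement yields both claims at once.
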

\begin{proof}
Suppose that $e_{J,D}$ occurs in $f^{d'} e_{1,\dots,k}$ for some $d'\le d$. Then we have a product $C'$ occurring in $f^{d'}$ such that $C'e_{1,\dots,k}=\pm e_{J,D}$. We prove that $C'=C_{J,D}$, this will imply that $d'=d$ (because a summand $C'$ occurring in $f^{d'}$ determines $d'$). It will also imply that  $C''e_{1,\dots,k}\neq\pm e_{J,D}$ for any other $C''$ occurring in $f^d$ and hence $e_{J,D}$ occurs in $f^d e_{1,\dots,k}$.

Let the set $\mathcal E_i$ consist of all $E_{j,i}$ and $E_{-i,j}$ for which $|j|>i$ or $j=0$, i.e.\ all those $E_{a,b}$ which occur in some $f_{i,j}$. Denote $C=C_{J,D}$, both $C$ and $C'$ when read from left to right start with a (possibly empty) product of elements of $\mathcal E_1$, followed by a product of elements of $\mathcal E_2$, etc. Since $C$ and $C'$ act nontrivially on $e_{1,\dots,k}$, the last $n-k$ products are empty in both cases.

For $i\in[1,n+1]$ write $C[{<}i]$ (resp.\ $C'[{<}i]$) to denote the subword in $C$ (resp.\  $C'$) consisting of all the appearing $E_{a,b}\in\bigcup_{i'<i}\mathcal E_{i'}$ taken in the same order and to the same powers. Similarly, for $i\in[0,n]$ let $C[{>}i]$ (resp.\ $C'[{>}i]$) denote the subword in $C$ (resp.\  $C'$) consisting of all $E_{a,b}\in\bigcup_{i'>i}\mathcal E_{i'}$. By induction on $i$ we show that $C[{<}i]=C'[{<}i]$ for all $i\in[1,k+1]$. 

The induction base $i=1$ is trivial. Consider the induction step from $i\in[1,k]$ to $i+1$. In view of the induction hypothesis, $d'(i)\le d(i)$ (cf.\ Definition~\ref{monorder}). Set $j=w^{O,J}(i)$. Note that if $j=i$, then $d(i)=0$, hence $d'(i)=0$, and the induction step is immediate. 

Let $j\gtrdot i$. Suppose $d'(i)=0$, i.e.\ $C'[{>}(i-1)]=C'[{>}i]$. Both of the nonzero multivectors $C[{>}i]e_{1,\dots,k}$ and $C'[{>}i]e_{1,\dots,k}$ have $i$ as a subscript but not $-i$ because all $E_{a,b}$ appearing in $C[{>}i]$ and $C'[{>}i]$ have $|a|,|b|\neq i$. Thus, the subscript sets of the multivectors
\[C[{>}(i-1)]e_{1,\dots,k}=E^{J,D}_iC[{>}i]e_{1,\dots,k},\quad C'[{>}(i-1)]e_{1,\dots,k}=C'[{>}i]e_{1,\dots,k}\] 
are distinct because the action of $E^{J,D}_i$ removes $i$ from or adds $-i$ to the subscript set. However, both of these multivectors are mapped to $\pm e_{J,D}$ by $C[{<}i]=C'[{<}i]$. This is impossible since any product of the $E_{a,b}$ which does not vanish on two multivectors with distinct subscript sets maps them to multivectors with distinct subscript sets. 

Thus, $d'(i)>0$. Since $d(i)\in\{1,1/2\}$, there is exactly one $j'$ such that $d'_{i,j'}>0$. If $d'(i)=d(i)=1$, then $d'\le d$ implies $(i,j')\le(i,j)$ (the order in Definition~\ref{monorder}). Otherwise we have $d'(i)=1/2$ and $j'=-i$ which again implies $(i,j')\le(i,j)$ since $(i,-i)$ is $<$-minimal among all $(i,j)$.

Suppose $j=-i$, then $j'=j$. If $d_{i,-i}=d'_{i,-i}=1$ (i.e.\ $i\in D$), then exactly one of $E_{0,i}$ and $E_{-i,0}$ appears in each of $C$ and $C'$. For the induction step we need to show that these two factors are the same. Indeed, suppose that $E_{0,i}$ appears in $C$ but $E_{-i,0}$ appears in $C'$. The multivectors $E_{0,i}C[{>}i](e_{1,\dots,k})$ and $E_{-i,0}C'[{>}i](e_{1,\dots,k})$ must have the same set of subscripts since both are mapped to $\pm e_{J,D}$ by $C[{<}i]$. This is impossible since 0 is among the subscripts of $E_{0,i}C[{>}i](e_{1,\dots,k})$ but not among the subscripts of $E_{-i,0}C'[{>}i](e_{1,\dots,k})$. The case of $E_{-i,0}$ appearing in $C$ and $E_{0,i}$ appearing in $C'$ is symmetric.

Let $d_{i,-i}=2$, i.e.\ $i\notin D$. Then the subscript set of $E^{J,D}_iC[{>}i]e_{1,\dots,k}$ contains $-i$ but not $i$. If $d'_{i,-i}=1$, then $C'[{>}(i-1)]$ differs from $C'[{>}i]$ by a factor of $E_{0,i}$ or $E_{-i,0}$ and the subscript set of $C'[{>}(i-1)]e_{1,\dots,k}$ contains, respectively, neither or both $i$ and $-i$. Again we have a contradiction with 
\begin{equation}\label{samesubs}
E^{J,D}_iC[{>}i]e_{1,\dots,k}=\pm C'[{>}(i-1)]e_{1,\dots,k}.    
\end{equation} 
Hence $d'_{i,-i}=2$ and $C'[{>}(i-1)]=E'C'[{>}i]$ where $E'$ is either $E_{-i,0}E_{0,i}$ or $E_{0,i}E_{-i,0}$. We check that $E'=E^{J,D}_i$. Indeed, if, for instance, $E'=E_{-i,0}E_{0,i}$ while $E^{J,D}_i=E_{0,i}E_{-i,0}$, then $0$ is among the subscripts of $E^{J,D}_iC[{>}i]e_{1,\dots,k}$ but not $C'[{>}(i-1)]e_{1,\dots,k}$ contradicting~\eqref{samesubs}.

We may now assume that $|j|>i$, in particular, $i\notin D$. Let $C'[{>}(i-1)]=E'C'[{>}i]$, then $E'$ is a product of $d'_{i,j'}\in\{1,2\}$ factors of the form $E_{a,b}$. Denote $D=\{i_1>\dots>i_l\}$ and choose $r$ so that $i\in{}]i_{r+1},i_r[$ where $i_0=n+1$ and $i_{l+1}=0$. First, let $r$ be even meaning that $E^{J,D}_i=E_{j,i}$. The subscripts of $E_{j,i}C[{>}i]e_{1,\dots,k}$ do not include $-i$ (since $-i$ is not among the subscripts of $C[{>}i]e_{1,\dots,k}$) or 0 (since $r$ is even, see proof of Proposition~\ref{ejd}). Hence, $j'\neq -i$, otherwise the subscripts of $C[{>}(i-1)]e_{1,\dots,k}$ would contain 0 or $-i$ contradicting~\eqref{samesubs}. Therefore, $d'_{i,j'}=1$ and $E'=E_{j',i}$, we are to show that $j'=j$.


We see that $j'$ is among the subscripts of $E'C'[{>}i]e_{1,\dots,k}$. Furthermore, $C[{<}i]$ contains no factors of the form $E_{a,j'}$. Indeed, $E^{J,D}_{i'}=E_{w^{O,J}(i'),i'}$ for $i'\in(i_{r+1},i)$ while $E^{J,D}_{i'}$ with $i'\le i_{r+1}$ only contain $E_{a,b}$ with $|a|,|b|\le i_{r+1}<i<|j'|$. Thus, $j'$ must be among the subscripts of $e_{J,D}$. In view of Proposition~\ref{ejd}, to complete the case of even $r$ we may show that $|j'|\in{}]i_{r+1},i_r[$ and $j'$ is not equal to $w^{O,J}(i')$ for any $i'\in]i_{r+1},i_r{[}\bs\{i\}$.

Consider $a$, $a'$ such that $r(i,a)=j$ and $r(i,a')=j'$. Proposition~\ref{wrproperties}(a) describes $a$ as the $\lessdot$-maximal element for which $(i,a)$ is contained in $M_O(J)$. Denote the $j'$th pipe of the set $O$ by $\cP$, it contains $(i,a')$ by Proposition~\ref{wrproperties}(d). Recall that $(i,j')\le(i,j)$ (the order in Definition~\ref{monorder}), let us show that if $(i,j')<(i,j)$, then $\cP$ contains an element $(i,a'')\notin J$. Indeed, if $(i,a')\notin J$, we set $a''=a'$. If $(i,j')<(i,j)$ but $(i,a')\in J$, then we cannot have $(i,a)\in O$: otherwise $(i,a)$ would be $\prec$-maximal among elements of the form $(i,b)$ contained in $J\cap O$, hence $(i,j)$ would be $<$-minimal among all $(i,r(i,b'))$ with $(i,b')\in J$. This leaves only one possibility: $(i,a)\in(\max_\prec J)\bs O$ and $(i,a')$ is $\prec$-maximal among elements of the form $(i,b)$ contained in $J\cap O$. However, this lets us choose $a''$ as the element covering $a$ in the order $\lessdot$, the element $(i,a'')$ will lie in $\cP$ because we have $(i,b)\notin O$ for all $a'\lessdot b\lessdot a''$.

Let the pipe $\cP$ start with $(p_1,q_1),\dots,(p_m,q_m)$ where $(p_1,q_1)=(|j'|,-|j'|)$ and $(p_m,q_m)=(i,a'')$. Note that all $(p_l,q_l)\notin J$, however, all elements of the forms $(i_r,b)$ and $(i_{r+1},b)$ lie in $J$. Since $p_m=i\in{}]i_{r+1},i_r[$ and $(p_{l-1},q_{l-1})$ covers $(p_l,q_l)$, we deduce that all $p_l\in{}]i_{r+1},i_r[$, thus, $|j'|\in{} ]i_{r+1},i_r[$. 

Now, for $i'\in{}]i,i_r[$ we have $w^{O,J}(i')=r(i',b)$ for a certain $(i',b)\in M_O(J)$. Since $i'>i$ and $(i',b)\in J$, we see that $(i',b)$ is not in $\cP$ and hence $w^{O,J}(i')=r(i',b)\neq j'$. Also, no $i'\in{}]i_{r+1},i[$ can satisfy $w^{O,J}(i')= j'$, otherwise $C'$ contains the subword \[E_{w^{O,J}(i'),i'}\dots E_{w^{O,J}(i-1),i-1}E_{j',i}=E_{j',i'}\dots E_{w^{O,J}(i-1),i-1}E_{j',i}\] which acts trivially on $\wedge^k V$. This shows that $j'=j$ and completes the case of even $r$.

The case of odd $r$ is similar. In this case $E^{J,D}_i=E_{-i,-j}$ and the subscripts of $E_{-i,-j}C[{>}i]e_{1,\dots,k}$ include $i$ and 0. This again implies $j'\neq-i$ and $d'_{i,j'}=1$, one shows that $j'=j$. It is checked that $-j'$ is not among the subscripts of $e_{J,D}$ and it remains to show that $|j'|\in{}]i_{r+1},i_r[$ and $-j'$ is not equal to $-w^{O,J}(i')$ for any $i'\in ]i_{r+1},i_r[$. Here the argument is the same as for even $r$, the only difference being that in the last step one considers the subword $E_{-i',-w^{O,J}(i')}\dots E_{-(i-1),-w^{O,J}(i-1)} E_{-i,-j'}$.
\end{proof}


\begin{cor}\label{cor:omn}
The set of lattice points in $\Pi^\rB_O(\om_k)$ with $k\le n-1$ (resp.\ in $\Pi^\rB_O(2\om_n)$) is the set of essential signatures for $\om_k$ (resp.\ $2\om_n$).
\end{cor}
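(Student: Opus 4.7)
The plan is to deduce the corollary directly from Proposition~\ref{baseB} combined with the dimension count of Corollary~\ref{pointsdim}. It is convenient to unify the two cases by letting $\mu$ denote $\om_k$ when $k\le n-1$ and $2\om_n$ when $k=n$; in both situations $V_\mu\simeq\wedge^k V$ with highest-weight vector $v_\mu=e_{1,\dots,k}$, and by Lemma~\ref{pipointdecompB}(a) (applied to $\mu$, whose last coordinate $a_n$ is even) the lattice points of $\Pi^\rB_O(\mu)$ are exactly the points $d_{J,D}:=\pi\xi(x^{J,D})$ with $J\in\cJ_k$ and $D\subset[1,n]$ such that $(i,-i)\in J$ for all $i\in D$.

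First I would show that every such lattice point $d=d_{J,D}$ is essential for $\mu$. Suppose on the contrary that $f^d v_\mu\in\spann\{f^{d'}v_\mu\mid d'<d\}$. By the first assertion of Proposition~\ref{baseB}, the basis multivector $e_{J,D}$ appears with a nonzero coefficient in the multivector expansion of $f^d v_\mu$. It would then appear with a nonzero coefficient in $f^{d'}v_\mu$ for at least one $d'<d$, contradicting the second assertion of Proposition~\ref{baseB}, which states precisely that $e_{J,D}$ does not occur in $f^{d'}e_{1,\dots,k}$ for any $d'<d$. Hence every lattice point is essential.

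To close the argument I invoke a dimension count. The set of essential signatures for any representation always indexes a basis, so its cardinality equals $\dim V_\mu$; on the other hand, Corollary~\ref{pointsdim} gives $|\Pi^\rB_O(\mu)\cap\bZ^{P\bs A}|=\dim V_\mu$. Thus the inclusion of the set of lattice points into the set of essential signatures, established above, is an inclusion of finite sets of equal cardinality and is therefore an equality. This proves the corollary for both $\om_k$ with $k\le n-1$ and for $2\om_n$ simultaneously.

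The main work is not in this corollary itself but in Proposition~\ref{baseB}, which does all the heavy lifting; the present step is essentially a bookkeeping argument. The only point requiring a moment of care is checking that the construction $e_{J,D}$ of Definition~\ref{ejddef} makes sense and Proposition~\ref{baseB} applies uniformly for $k\le n-1$ (acting on $\wedge^k V\simeq V_{\om_k}$) and for $k=n$ (acting on $\wedge^n V\simeq V_{2\om_n}$), which is immediate from the way the ingredients were set up.
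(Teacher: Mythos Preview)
Your proof is correct and follows the same route the paper intends. The paper leaves this corollary without explicit proof because the argument is exactly the one you wrote: Proposition~\ref{baseB} shows that for each lattice point $d=\pi\xi(x^{J,D})$ the multivector $e_{J,D}$ occurs in $f^d e_{1,\dots,k}$ but in no $f^{d'}e_{1,\dots,k}$ with $d'<d$ (over all signatures $d'$, not just lattice points), so each lattice point is essential; the dimension count from Corollary~\ref{pointsdim} then forces equality.
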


As already mentioned, the following fact is also immediate from Proposition~\ref{baseB}.
\begin{cor}\label{bijection}
Every nonzero multivector $e_{i_1,\dots,i_k}$ is equal to $\pm e_{J,D}$ for exactly one pair $J,D$ with $J\in\cJ_k$.
\end{cor}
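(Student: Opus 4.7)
The plan is to combine Proposition~\ref{baseB} with a short dimension count; the genuine work has already been carried out in that proposition, and this corollary is really a bookkeeping consequence of its triangularity.

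First I would prove injectivity (up to sign) of the map $(J,D) \mapsto e_{J,D}$. Suppose $e_{J,D} = \pm e_{J',D'}$ for two pairs with $J, J' \in \cJ_k$, and set $d = \pi\xi(x^{J,D})$, $d' = \pi\xi(x^{J',D'})$. Split into cases. If $d = d'$, then Proposition~\ref{imagexJD} shows that the tuple $(w^{O,J}(i))_{i\in[1,k]}$ is read off from $d$ as the column of the unique nonzero entry in each row $i\le k$, and $D$ is read off from which of those entries equal $1$ rather than $2$; the type C bijection between $\cJ_k$ and admissible tuples (established inside the proof of Theorem~\ref{degenmainC}) then forces $J = J'$, so $(J,D) = (J',D')$. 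Otherwise $d \neq d'$, and WLOG $d' < d$; Proposition~\ref{baseB} applied to $(J,D)$ says $e_{J,D}$ does not occur in $f^{d'} e_{1,\dots,k}$, while the same proposition applied to $(J',D')$ says $e_{J',D'} = \pm e_{J,D}$ does occur there, a contradiction.

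For surjectivity I would close with a cardinality comparison. The number of valid pairs $(J,D)$ with $J \in \cJ_k$ coincides, via Lemma~\ref{pipointdecompB} applied to $\om_k$ when $k \le n-1$ or to $2\om_n$ when $k=n$, with the number of lattice points in $\Pi^\rB_O(\om_k)$ or $\Pi^\rB_O(2\om_n)$ respectively. By Corollary~\ref{pointsdim} these equal $\dim V_{\om_k}$ and $\dim V_{2\om_n}$, both of which equal $\binom{2n+1}{k}$, the total number of nonzero multivectors in $\wedge^k V$. Together with injectivity this yields the bijection. I do not anticipate any genuine obstacle: Proposition~\ref{baseB} already provides the non-cancellation statement that drives injectivity, and the only subtle point is the ``$d = d'$ forces $(J,D) = (J',D')$'' step, which is handled cleanly by the readability of $J$ and $D$ from $\pi\xi(x^{J,D})$ in Proposition~\ref{imagexJD}.
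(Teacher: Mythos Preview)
Your proposal is correct and matches the paper's own reasoning: the paper simply declares the corollary ``immediate from Proposition~\ref{baseB}'' after noting (just before that proposition) that the map from lattice points to multivectors is a bijection by triangularity plus a dimension count. Your explicit handling of the $d=d'$ case via Proposition~\ref{imagexJD} and the type~C bijection is precisely the step the paper absorbs into its identification of pairs $(J,D)$ with lattice points, so there is no substantive difference in approach.
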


The final ingredient needed to prove Theorem~\ref{essentialB} is

\begin{proposition}\label{prop:essentialSpinor}
The set of lattice points in $\Pi^\rB_O(\om_n)$ is the set of essential signatures for $\om_n$.
\end{proposition}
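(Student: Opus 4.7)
The plan is to combine Corollary~\ref{spinpoints} — which identifies $\Pi^\rB_O(\om_n)\cap\bZ^{P\bs A}$ with $\{y^D\}_{D\subset[1,n]}$, a set of cardinality $2^n=\dim V_{\om_n}$ — with two verifications: that $\{f^{y^D}v_{\om_n}\}_{D\subset[1,n]}$ forms a basis of $V_{\om_n}$, and that each $y^D$ is essential. Since essential signatures always form a basis of size $\dim V_{\om_n}$, these two facts together will force $\{y^D\}$ to coincide with the set of essential signatures.

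The groundwork will be a structural constraint: for any $d\in\bZ_{\ge 0}^{P\bs A}$ with $f^dv_{\om_n}\ne 0$, one has $d_{i,j}\in\{0,1\}$ and at most one $j$ satisfies $d_{i,j}>0$ for each fixed row $i$. I would prove this by tracking the $\varepsilon_i$-coordinate of the weight through the PBW product: row-$i$ factors form a consecutive block in the PBW order; no row-$k$ factor with $k>i$ changes the $\varepsilon_i$-coordinate (since any such $\alpha_{k,j}$ involves only $\varepsilon_k$ and possibly $\varepsilon_{|j|}$ with $k,|j|>i$); and each row-$i$ factor decreases the $\varepsilon_i$-coordinate by one. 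Since the weights of $V_{\om_n}$ lie in $\{\pm\tfrac12\}^n$, a second row-$i$ factor would push the $\varepsilon_i$-coordinate to $-\tfrac32$, annihilating the vector. Linear independence of $\{f^{y^D}v_{\om_n}\}$ then follows: using $\alpha_{i,-i}=\varepsilon_i$, these vectors have pairwise distinct weights $\om_n-\sum_{i\in D}\varepsilon_i$ exhausting the weights of the spin representation; nonzeroness of each follows by induction on $|D|$ — for the smallest $i\in D$, the $\alpha_{i,-i}$-string through $\mu:=\om_n-\sum_{i'\in D\bs\{i\}}\varepsilon_{i'}$ in $V_{\om_n}$ has length two, so $\fsl_2$-theory on the short root $\alpha_{i,-i}$ forces $f_{i,-i}$ to map $V_{\om_n}[\mu]$ isomorphically to $V_{\om_n}[\mu-\varepsilon_i]$.

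To establish essentiality of $y^D$, since $V_{\om_n}$ has one-dimensional weight spaces it suffices to show that $f^{d'}v_{\om_n}=0$ for every $d'<y^D$ sharing the $\fg$-weight of $y^D$. Assuming $f^{d'}v_{\om_n}\ne 0$ invokes the structural constraint, and the differing pattern of $d'$ and $y^D$ at the smallest differing row $i^*$ must fall into one of three cases: (I) $i^*\in D$ and $d'$ is empty at row $i^*$; (II) $i^*\in D$ and $d'$ has one entry at $(i^*,j)$ with $|j|>i^*$; or (III) $i^*\notin D$ and $d'$ has one entry at row $i^*$. In (II) and (III) one computes $d'(i^*)>y^D(i^*)$, forcing $d'>y^D$ by Definition~\ref{monorder} — a contradiction. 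In (I), $d'$ agrees with $y^D$ on all rows below $i^*$ and is empty at row $i^*$, so the $\varepsilon_{i^*}$-coefficient of the weight of $d'$ vanishes: rows $<i^*$ contribute $\alpha_{k,-k}=\varepsilon_k$ with $k<i^*$; row $i^*$ is empty; rows $k>i^*$ contribute $\alpha_{k,j}$ with $k,|j|>i^*$ and hence no $\varepsilon_{i^*}$. But the weight of $y^D$ has $\varepsilon_{i^*}$-coefficient one since $i^*\in D$, a contradiction.

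The main obstacle is the last case analysis, where the somewhat technical Definition~\ref{monorder} — lexicographic via the half-integer $d(i)$ then by direct coordinate comparison, with an exceptional clause for elements of $O$ — must be reconciled with the short-root weight combinatorics. The structural constraint from the spin representation is the crucial lever: patterns (II) and (III) are ruled out by the value of $d'(i^*)$ alone, so the internal order on row $i^*$ and its exceptional clause never enter the argument.
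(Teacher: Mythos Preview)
Your proof is correct and follows essentially the same route as the paper's: show that each $f^{y^D}v_{\om_n}$ is a nonzero weight vector, then establish essentiality by comparing $d(i^*)$ to $y^D(i^*)$ at the minimal differing row and falling back on a weight-coordinate argument in the one remaining case. The paper is more terse---it cites the explicit description of the spin representation in \cite{carter} instead of your self-contained structural constraint and $\fsl_2$-string argument, and it phrases the key step as showing that every $d$ of the correct $\fg$-weight satisfies $d\ge y^D$ (without first assuming $f^d v_{\om_n}\neq 0$)---but the substance is the same.
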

\begin{proof}
A description of the spin representation $V_{\om_n}$ can be found in~\cite[Section 13.5]{carter}. It is spanned by vectors $v_D$ with $D$ ranging over subsets of $[1,n]$. The vector $v_D$ has weight $\om_n-\sum_{i\in D}\varepsilon_i$ and we assume $v_{\varnothing}=v_{\om_n}$. Furthermore, if $i\notin D$, then $f_{i,-i}v_D$ is a nonzero multiple of $v_{D\cup\{i\}}$.

By Proposition~\ref{spinpoints} the set $\Pi^\rB_O(\om_n)\cap\bZ^{P\bs A}$ consists of all $y^D$ with $D\subset[1,n]$. By the above, $f^{y^D}v_{\om_n}$ is a nonzero multiple of $v_D$. To show that the signatures $y^D$ are essential we check that if  a PBW monomial $f^d$ has $\fg$-weight $-\sum_{i\in D}\varepsilon_i$, then $d\ge y^D$. Indeed, suppose $d\neq y_D$ and consider the $<$-minimal $(i,j)$ such that $d_{i,j}\neq (y_D)_{i,j}$. If $j\neq -i$, then $d(i)\ge 1>y_D(i)$ and we immediately have $d>y^D$. If $j=-i$, then $d(i)>y^D(i)$ unless $i\in D$ and $d_{i,-i}=0$. But in this case the $i$th coordinate with respect to the basis $\varepsilon_1,\dots,\varepsilon_n$ of the $\fg$-weight of $f^d$ will be $1/2$ rather than $-1/2$.
\end{proof}

\begin{proof}[Proof of Theorem~\ref{essentialB}]
In Corollary~\ref{cor:omn} and Proposition~\ref{prop:essentialSpinor} we have established the claim for $\la=\om_i$ and $\la=2\om_n$. We proceed by induction on (the sum of coordinates of) $\la$. Consider $\la$ which is not fundamental and not $2\om_n$. Then $\la$ can be written as $\la=\mu+\nu$ where $\mu=(a_1,\dots,a_n)$ and $\nu=(b_1,\dots,b_n)$ are nonzero integral dominant weights and at least one of $a_n$ and $b_n$ is even. By Proposition~\ref{minkowskiB} we have 
\begin{equation}\label{inductionstep}
\Pi^\rB_O(\la)\cap\bZ^{P\bs A}=\Pi^\rB_O(\mu)\cap\bZ^{P\bs A}+\Pi^\rB_O(\nu)\cap\bZ^{P\bs A}.    
\end{equation}
By the induction hypothesis and Proposition~\ref{semigroup} all elements of the right-hand side of~\eqref{inductionstep} are essential signatures for $\la$. However, by Proposition~\ref{pointsdim} the left-hand side of~\eqref{inductionstep} contains exactly $\dim V_\la$ points which completes the step.
\end{proof}




\subsection{Toric degenerations and Newton--Okounkov bodies}

We show how the constructed monomial bases can be utilized to obtain toric degenerations and Newton--Okounkov bodies of flag varieties. The approach in this subsection can be viewed as an extension of the methods in~\cite{FFL3}.

Let $G$ denote the Lie group $SO_{2n+1}(\bC)$. Fix an integral dominant $\la=(a_1,\dots,a_n)$ with $a_n$ even. Consider the projectivization $\bP(V_\la)$ and let $[v_\la]\in\bP(V_\la)$ denote the class of $v_\la$. The orbit $F_\la=G[v_\la]$ is the partial flag variety associated with $\la$. 

\begin{remark}
By considering $\mathrm{Spin}_{2n+1}(\bC)$ instead of $SO_{2n+1}(\bC)$ we can define $F_\la$ for odd $a_n$ as well. However, $F_\la$ is determined up to isomorphism by the set of those $k$ for which $a_k\neq 0$ and so is the toric variety of $\cQ^\rB_O(\la)$. Therefore, the assumption that $a_n$ is even does not make (the second claim of) Theorem~\ref{degenmainB} less general.  
\end{remark}

For $d\in\bZ_{\ge 0}^{P\bs A}$ denote $K_d=\prod_{i,j}(d_{i,j}!)$. By Theorem~\ref{essentialB} we have a basis in $V_\la$ consisting of the vectors $v_y=f^yv_\la/K_y$ with $y\in\Pi^\rB_O(\la)\cap\bZ^{P\bs A}$. This basis induces homogeneous coordinates on $\bP(V_\la)$ and identifies its homogeneous coordinate ring with $S=\bC[X_y]_{y\in\Pi^\rB_O(\la)\cap\bZ^{P\bs A}}$. The subvariety $F_\la$ is cut out by an ideal $I\subset S$. With respect to the standard $\bZ$-grading the $m$th homogeneous component of $S/I$ is identified with $H^0(F_\la,\cO_{F_\la}(m))$ and has dimension $\dim V_{m\la}$ by the Borel--Weil theorem.

The toric variety of $\Pi^\rB_O(\la)$ (and hence of $\cQ^\rB_O(\la)$) is also cut out by an ideal $I_O\subset S$. This ideal is the kernel of the map $\varphi_O$ to the ring $\bC[P\bs A,t]=\bC[t][z_{i,j}]_{(i,j)\in P\bs A}$ given by \[\varphi_O(X_y)=tz^y=t\prod_{(i,j)\in P\bs A} z_{i,j}^{y_{i,j}}.\]
The zero set of $I_O$ coincides with the toric variety of $\Pi^\rB_O(\la)$ because the polytope $\Pi^\rB_O(\la)$ is normal, the latter follows from Lemma~\ref{pipointdecompB}.

Now consider the exponential map $\exp:\fg\to G$ and the map $\theta:\bC^{P\bs A}\to G$ given by \[\theta((c_{i,j})_{(i,j)\in P\bs A})=\prod \exp(c_{i,j}f_{i,j})\] with the factors ordered as usual first by $i$ increasing from left to right and then by $j$. Let $U_-\subset G$ denote the unipotent subgroup tangent to $\fn_-=\bigoplus_{i,j}\bC f_{i,j}$. We will use the following standard fact (see, for instance,~\cite[Proposition 8.2.1]{Sp}).
\begin{proposition}\label{imageopen}
The map $\theta$ is an isomorphism (of varieties) between $\bC^{P\bs A}$ and $U_-$.  
\end{proposition}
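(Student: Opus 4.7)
The plan is to factor $\theta$ through the exponential map on the nilpotent Lie algebra $\fn_-$ and then reduce the claim to inverting a polynomial self-map of $\fn_-$ that is triangular with respect to the height filtration of the root system.

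First I would check that $\theta$ is a morphism of varieties. Every $f_{i,j}$ acts by a strictly lower-triangular (hence nilpotent) matrix on the finite-dimensional space $V$ in the basis $\{e_{-n},\dots,e_n\}$ reordered by weight, so $\exp(c_{i,j}f_{i,j})$ is a polynomial in $c_{i,j}$ with values in $GL(V)$. The ordered product $\theta(c)$ is therefore a polynomial map from $\bC^{P\bs A}$ into $G$, and its image lies in $U_-$ because each factor does. Since $U_-$ is a connected unipotent algebraic group and $\fn_-=\Lie(U_-)$ is nilpotent, the exponential $\exp\colon\fn_-\to U_-$ is an isomorphism of affine varieties (with polynomial inverse given by the truncating logarithm on unipotent matrices). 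It therefore suffices to show that $\phi:=\log\circ\,\theta\colon\bC^{P\bs A}\to\fn_-$ is an isomorphism of varieties.

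Next, I would expand $\phi$ via the Baker--Campbell--Hausdorff formula. Because BCH terminates on the nilpotent Lie algebra $\fn_-$, the map $\phi$ is a polynomial map; its degree-one part is the canonical linear isomorphism sending the $(i,j)$-th standard coordinate to $f_{i,j}$, and its higher-degree parts are iterated Lie brackets of the $f_{i,j}$ weighted by monomials in the $c_{i,j}$. Since $[\fg_{-\alpha_{i_1,j_1}},\fg_{-\alpha_{i_2,j_2}}]\subset\fg_{-(\alpha_{i_1,j_1}+\alpha_{i_2,j_2})}$ and the height of a sum of two positive roots strictly exceeds the height of either summand, the $f_{i,j}$-coordinate of $\phi(c)$ has the form $c_{i,j}+Q_{i,j}(c)$ where $Q_{i,j}$ is a polynomial involving only those $c_{i',j'}$ with $\height(\alpha_{i',j'})<\height(\alpha_{i,j})$.

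Ordering coordinates by increasing root height, the Jacobian of $\phi$ is thus triangular with $1$'s on the diagonal, so $\phi$ admits a polynomial inverse computable recursively by height. Composing with $\exp$ produces the desired inverse of $\theta$. The main (and in fact only substantive) obstacle is bookkeeping the BCH correction terms cleanly; the ordering convention used to define $\theta$ plays no role beyond fixing the particular polynomials $Q_{i,j}$, since the triangular structure relies solely on the general fact that iterated brackets of negative root vectors land in root spaces of strictly greater height.
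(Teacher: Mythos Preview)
Your argument is correct. The paper does not actually prove this proposition: it is stated as a standard fact with a reference to \cite[Proposition 8.2.1]{Sp} (Springer's textbook on linear algebraic groups), so there is no proof in the paper to compare against.

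Your route---factoring through the exponential isomorphism $\exp\colon\fn_-\to U_-$ and then showing that $\log\circ\,\theta$ is a unipotent-triangular polynomial self-map of $\fn_-$ with respect to the root-height filtration---is precisely the standard proof one finds in the literature (and essentially what Springer does). The key observation, that every BCH correction term lands in a root space of strictly greater height than any of its inputs, is exactly what makes the Jacobian unitriangular and the inverse polynomial. Your remark that the specific ordering of the factors in $\theta$ is irrelevant to the triangularity (only affecting the explicit form of the $Q_{i,j}$) is also correct and worth noting.
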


\begin{theorem}\label{degenmainB}
$I_O$ is an initial ideal of $I$ and, consequently, the toric variety of $\cQ^\rB_O(\la)$ is a flat degeneration of $F_\la$.
\end{theorem}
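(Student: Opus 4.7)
My plan is to construct a weak monomial order $\prec$ on $S$ with $\initial_\prec I = I_O$; once this is in place, Theorem~\ref{flatfamily} will immediately deliver the claimed flat degeneration. The central tool will be an explicit leading-term formula for the coordinates $X_y$ pulled back to the open cell $U_-[v_\la] \subset F_\la$ through $\theta$.

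First I would expand
\[
\theta(c)v_\la = \sum_d \frac{c^d}{K_d}\, f^d v_\la.
\]
The essential-signature reductions from the proof of Theorem~\ref{essentialB} give $f^d v_\la = K_d v_d$ when $d$ is essential and reduce $f^d v_\la$ to a linear combination of $f^{d'} v_\la$ with essential $d' < d$ otherwise. Collecting coefficients yields
\[
X_y\bigl(\theta(c)v_\la\bigr) = c^y + (\text{strictly higher monomials in } <)
\]
for each $y \in \Pi^\rB_O(\la) \cap \bZ^{P \bs A}$. Repeating the same analysis with $\la$ replaced by $m\la$ (whose essential signatures are $m\Pi^\rB_O(\la)\cap\bZ^{P\bs A}$ by Theorem~\ref{essentialB}) gives $\theta(c)v_{m\la} = \sum_y (c^y + \text{higher})v_y$. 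Since $\theta(c) \in U_-$ acts diagonally on $V_\la^{\otimes m}$ the Veronese-type embedding sends $\theta(c)[v_\la]$ to $[\theta(c)v_{m\la}]$, so evaluating the section $X_{y_1}\cdots X_{y_m}$ in two ways gives
\[
(X_{y_1}\cdots X_{y_m})(\theta(c)v_{m\la}) = \prod_{s=1}^m X_{y_s}(\theta(c)v_\la) = c^Y + (\text{higher}),
\]
where $Y = y_1 + \cdots + y_m$. Expanding the left-hand side in the essential basis of $V_{m\la}$ and comparing coefficients forces the key identity in the dual basis $\{v_y^*\}$ of $A_m = V_{m\la}^*$:
\[
X_{y_1}\cdots X_{y_m} = v_Y^* + \sum_{y > Y}\beta_y\, v_y^*.
\]

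Next I would define $\prec$ on $S$ by setting $\wt(X_y) = y \in \bZ^{P\bs A}$ (extended additively to monomials) and declaring $M \prec M'$ iff $\wt(M) > \wt(M')$ in $<$, leaving monomials of equal weight $\prec$-incomparable. Iterated application of Corollary~\ref{minkowskiB} (valid since $a_n$ is even) shows that every $y \in m\Pi^\rB_O(\la)\cap\bZ^{P\bs A}$ decomposes as $y = y_1 + \cdots + y_m$ with each $y_s \in \Pi^\rB_O(\la)\cap\bZ^{P\bs A}$. Starting from the $<$-maximum and proceeding downward, I would inductively lift each $v_y^*$ to some $P_y \in S_m$ by picking such a decomposition and setting
\[
P_y = X_{y_1}\cdots X_{y_m} - \sum_{y' > y}\beta_{y'} P_{y'},
\]
so that every monomial occurring in $P_y$ has weight $\ge y$. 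Consequently, for any two decompositions of a common $Y$,
\[
\bigl(X_{y_1}\cdots X_{y_m} - X_{y'_1}\cdots X_{y'_m}\bigr) - \sum_{y > Y}(\beta_y - \beta'_y)P_y \in I_m,
\]
and its $\prec$-initial part is precisely the toric binomial $X_{y_1}\cdots X_{y_m} - X_{y'_1}\cdots X_{y'_m}$. Such binomials generate $I_O$, so $I_O \subseteq \initial_\prec I$. To upgrade to equality I would compare Hilbert series: $\dim(S/I)_m = \dim V_{m\la}$ by Borel--Weil while $\dim(S/I_O)_m = |m\Pi^\rB_O(\la)\cap\bZ^{P\bs A}| = \dim V_{m\la}$ by Corollary~\ref{pointsdim} together with the normality from iterated Corollary~\ref{minkowskiB}; the flat family of Theorem~\ref{flatfamily} then forces $\dim(S/\initial_\prec I)_m = \dim(S/I)_m$, upgrading the containment to equality $\initial_\prec I = I_O$. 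A final application of Theorem~\ref{flatfamily} yields the desired toric degeneration of $F_\la$.

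The hard part will be the leading-term identity for the products $X_{y_1}\cdots X_{y_m}$: it encodes the compatibility of the essential-signature filtrations on the various $V_{m\la}$ with the multiplicative structure coming from the Veronese, and its verification requires both Theorem~\ref{essentialB} applied to $V_{m\la}$ and the identification of $\theta(c)v_{m\la}$ with the $V_{m\la}$-projection of $\theta(c)(v_\la^{\otimes m})$. Once this identity is in hand, the inductive construction of the $P_y$, the recognition of generators of $I_O$ as $\prec$-initial parts of elements of $I$, and the Hilbert-series count are all essentially formal.
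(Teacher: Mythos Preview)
Your argument is correct and reaches the same conclusion, but the paper organizes the proof more economically through the sagbi machinery of Proposition~\ref{idealsubalg}. The paper realizes $I$ directly as the kernel of the map $\varphi:S\to\bC[P\bs A,t]$ given by $X_y\mapsto tp_y$, where the $p_y$ are exactly your coordinate polynomials on the open cell; the leading-term formula $\initial_> p_y=z^y$ (your first step) then makes $\varphi_O=\varphi_>$, and a single graded-dimension count shows the $tp_y$ form a sagbi basis, so Proposition~\ref{idealsubalg} gives $I_O=\initial_{>^\varphi}I$ immediately. Your weak order $\prec$ coincides with the paper's $>^\varphi$ in each degree, so the two initial ideals agree; what you do differently is bypass Proposition~\ref{idealsubalg} and instead construct, by hand, explicit lifts $P_y\in S_m$ of a triangular basis of $(S/I)_m$ and exhibit the toric binomials of $I_O$ as $\prec$-initial parts of elements of $I$. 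This works, but the passage through $V_{m\la}^*$ and the Veronese is not needed: once you know $I=\ker\varphi$ and $\varphi(X_{y_1}\cdots X_{y_m})=t^m(z^Y+\text{higher})$, the difference of two monomials with the same weight $Y$ maps under $\varphi$ to something strictly higher, which (by the Minkowski property you cite) lies in the image of monomials of strictly higher weight---this is precisely the content of Proposition~\ref{idealsubalg}. Your approach buys a self-contained argument that does not invoke the sagbi lemma as a black box; the paper's approach buys brevity and avoids the bookkeeping of the inductive lift and the identification $(S/I)_m\cong V_{m\la}^*$.
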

\begin{proof}
We have the following map between affine spaces:
\[\theta(\text{-}) v_\la:\mathbb{C}^{P \bs A}\rightarrow V_\lambda.\]
Note that this map is polynomial because the elements $f_{i,j}$ act nilpotently. 
In other words, we have polynomials $p_y\in\bC[z_{i,j}]_{(i,j)\in P\bs A}$ indexed by $y\in\Pi^\rB_O(\la)\cap\bZ^{P\bs A}$ such that the coordinates of the vector $\theta((c_{i,j}))v_\la$ in the basis $\{v_y\}$ are the values $p_y|_{z_{i,j}=c_{i,j}}$. These values are also the homogeneous coordinates of the point $\theta((c_{i,j}))[v_\la]\in\bP(V_\la)$. By Proposition~\ref{imageopen} we have $\theta(\bC^{P\bs A})[v_\la]=U_-[v_\la]$, the latter orbit is open in $F_\la$ (it is the open Schubert cell). 
Hence, $I$ is the kernel of the map $\varphi:X_y\mapsto tp_y$ from $S$ to $\bC[P\bs A,t]$. 

The polynomial $p_y$ will be a linear combination of those monomials $z^d$ for which the coordinate of $f^dv_\la$ corresponding to $v_y$ is nonzero. Every such monomial will occur with a nonzero coefficient. Any such $d$ satisfies $d\ge y$ because the signature $y$ is essential for $\la$ (Theorem~\ref{essentialB}). Also note that the monomial $z^y$ appears in $p_y$ with coefficient 1 due to our choice of the value $K_y$. This means that $\initial_{>}p_y=z^y$ where $>$ (note the direction) is naturally viewed as a monomial order on $\bC[z_{i,j}]_{(i,j)\in P\bs A}$. If we extend $>$ to a total monomial order on $\bC[P\bs A,t]$ by setting $t^{m_1}z^{d_1}>t^{m_2}z^{d_2}$ when $m_1>m_2$ or ($m_1=m_2$ and $d_1>d_2$), we have $\initial_> tp_y=tz^y$.

We see that $\varphi_O(X_y)=\initial_>\varphi(X_y)$ so in the notations of Proposition~\ref{idealsubalg} we have $\varphi_O=\varphi_>$. The proposition will imply that $I_O=\ker\varphi_O$ is an initial ideal of $I=\ker\varphi$ if we show that $\varphi_O(S)=\initial_>\varphi(S)$, i.e.\ that the elements $tp_y$ form a sagbi basis. By construction we have $\varphi_O(S)\subset\initial_>\varphi(S)$. Furthermore, both $\varphi_O(S)$ and $\varphi(S)$ are homogeneous with respect to degree in $t$ and have an $m$th homogeneous component of dimension \[|\Pi^\rB_O(m\la)\cap\bZ^{P\bs A}|=\dim V_{m\la}.\] Therefore, $\varphi_O(S)$ and $\initial_>\varphi(S)$ have equal graded dimensions and hence coincide.
\end{proof}

For the remainder of this subsection we assume that $\la$ is regular so that $F_\la$ is the complete flag variety. Consider the line bundle $\cL=\cO_{F_\la}(1)$, i.e.\ the $G$-equivariant line bundle associated with the weight $\la$. In the proof of Theorem~\ref{degenmainB} we have identified $H^0(F_\la,\cL)$ with the image of the degree 1 homogenous component of $S$ under $\varphi$. This is the subspace in $\bC[P\bs A,t]$ spanned by the polynomials $tp_y$, $y\in\Pi^\rB_O(\la)\cap\bZ^{P\bs A}$. Choose $\tau\in H^0(F_\la,\cL)$ as $\tau=tp_0=t$.

By Proposition~\ref{imageopen} and since $\la$ is regular, the map $c\mapsto\theta(c)[v_\la]$ is a bijection from the space $\bC^{P\bs A}$ to the open Schubert cell $U_-[v_\la]$. This provides a birational equivalence between $\bC^{P\bs A}$ and $F_\la$ and lets us identify $\bC(F_\la)$ with the field $\mathbb K=\bC(z_{i,j})_{(i,j)\in P\bs A}$. Here $h\in\mathbb K$ is identified with the function taking value $h|_{z_{i,j}=c_{i,j}}$ at the point $\theta((c_{i,j}))[v_\la]$. The \textit{highest term valuation} on $\mathbb K$ given by the monomial order $>$ is a map $\nu:\mathbb K\bs\{0\}\to\bZ^{P\bs A}$ defined as follows. For $p\in\bC[z_{i,j}]_{(i,j)\in P\bs A}$ one sets $z^{\nu(p)}=\initial_> p$ and extends $\nu$ to $\mathbb K$ so that $\nu(gh)=\nu(g)+\nu(h)$. This is a $(\bZ^{P\bs A},>)$-valuation (defined as in Definition~\ref{valdef}). Similarly to Subsection~\ref{nosec} we have

\begin{definition}
The Newton--Okounkov body of $F_\la$ associated with $\cL$, $\tau$ and $\nu$ is the convex hull closure \[\Delta=\overline{\conv\left\{\left.\frac{\nu(\sigma/\tau^{\otimes m})}m\right|m\in\bZ_{>0},\sigma\in H^0(F_\la,\cL^{\otimes m})\bs\{0\}\right\}}\subset\bR^{P\bs A}.\]
\end{definition}

\begin{theorem}
$\Delta=\Pi^\rB_O(\la)$.
\end{theorem}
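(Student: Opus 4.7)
The plan is to leverage the sagbi basis property of $\{tp_y\}_{y \in \Pi^\rB_O(\la) \cap \bZ^{P\bs A}}$ in $\varphi(S)$ established in the proof of Theorem~\ref{degenmainB} and run the standard Newton--Okounkov argument. First I would unpack the rational function $\sigma/\tau^{\otimes m}$ as an explicit element of $\mathbb K$. Under the identification $H^0(F_\la,\cL^{\otimes m}) \cong \varphi(S)_m$, a nonzero section $\sigma$ corresponds to $\varphi(\sigma_0) = t^m \bar\sigma$ for some $\sigma_0 \in S_m$ and $\bar\sigma \in \bC[z_{i,j}]_{(i,j)\in P\bs A}$, while $\tau \leftrightarrow t$. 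Evaluating at points $\theta(c)[v_\la]$ of the open Schubert cell shows that $\sigma/\tau^{\otimes m}$ coincides, under $\bC(F_\la) \cong \mathbb K$, with $\bar\sigma$. Hence $\nu(\sigma/\tau^{\otimes m})$ is the exponent of the $>$-leading monomial of $\bar\sigma$, equivalently the exponent (after removing $t^m$) of $\initial_> \varphi(\sigma_0)$.

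For the inclusion $\Delta \subseteq \Pi^\rB_O(\la)$, I would use that $>$ is a total order, so $\initial_> \varphi(\sigma_0)$ is a nonzero scalar multiple of a single monomial. The sagbi basis property gives $\initial_> \varphi(\sigma_0) \in \varphi_O(S)_m$, which is spanned by the monomials $t^m z^{y_1 + \cdots + y_m}$ with $y_i \in \Pi^\rB_O(\la) \cap \bZ^{P\bs A}$. Thus $\initial_> \varphi(\sigma_0) = c\, t^m z^w$ for some $c \ne 0$ and some $w$ realized as a sum of $m$ lattice points of $\Pi^\rB_O(\la)$, yielding $\nu(\sigma/\tau^{\otimes m})/m = w/m \in \Pi^\rB_O(\la)$.

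For the reverse inclusion I would exhibit explicit sections attaining all rational points of $\Pi^\rB_O(\la)$. The dimension equality $\dim \varphi_O(S)_m = \dim V_{m\la} = |\Pi^\rB_O(m\la) \cap \bZ^{P\bs A}|$ from the proof of Theorem~\ref{degenmainB} forces the $m$-fold Minkowski sum of $\Pi^\rB_O(\la) \cap \bZ^{P\bs A}$ to exhaust all of $\Pi^\rB_O(m\la) \cap \bZ^{P\bs A} = m\Pi^\rB_O(\la) \cap \bZ^{P\bs A}$. Consequently, for any rational point $x \in \Pi^\rB_O(\la)$, choosing $m$ with $mx \in \bZ^{P\bs A}$ allows us to write $mx = y_1 + \cdots + y_m$ with $y_i \in \Pi^\rB_O(\la) \cap \bZ^{P\bs A}$. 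Taking $\sigma_0 = X_{y_1} \cdots X_{y_m}$ gives $\varphi(\sigma_0) = t^m \prod p_{y_i}$ with initial term $t^m z^{mx}$ (using the multiplicativity of $\initial_>$ together with $\initial_> p_{y_i} = z^{y_i}$), hence $\nu(\sigma/\tau^{\otimes m})/m = x$. Since $\Pi^\rB_O(\la)$ is a rational polytope, its rational points are dense in it, and therefore $\Delta \supseteq \Pi^\rB_O(\la)$.

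The one mildly subtle point is that for odd $a_n$ the polytope $\Pi^\rB_O(\la)$ is not a lattice polytope, so its lattice points alone do not suffice to span its convex hull. This is handled automatically by the argument above: as $m$ grows, the scaled lattice points $(1/m)\bigl(\Pi^\rB_O(m\la) \cap \bZ^{P\bs A}\bigr)$ exhaust all rational points of $\Pi^\rB_O(\la)$, which are dense in it, so the closed convex hull recovers the full polytope.
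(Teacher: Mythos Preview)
Your proof is correct and follows essentially the same approach as the paper: both identify $\sigma/\tau^{\otimes m}$ with a polynomial in $\bC[z_{i,j}]$, use $\initial_> p_y = z^y$ together with the sagbi basis property, and invoke the Minkowski sum property to match the valuation image with $\Pi^\rB_O(m\la)\cap\bZ^{P\bs A}$; the paper packages the two inclusions into a single cardinality argument $|\nu(\cdot)|\le\dim V_{m\la}=|\Pi^\rB_O(m\la)\cap\bZ^{P\bs A}|$ rather than proving them separately. Your remark about odd $a_n$ is unnecessary here, since the standing assumption in this subsection is that $a_n$ is even (so $\Pi^\rB_O(m\la)$ is already a lattice polytope for every $m$).
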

\begin{proof}
Since $\cL^{\otimes m}=\cO_{F_\la}(m)$, we have identified $H^0(F_\la,\cL^{\otimes m})$ with $\varphi(S[m])$ where $S[m]$ is the degree $m$ component in $S$. This provides an isomorphism between the ring $\bigoplus_m H^0(F_\la,\cL^{\otimes m})$ and $\varphi(S)$. In particular the space $H^0(F_\la,\cL^{\otimes m})/\tau^{\otimes m}\subset\mathbb K$ is precisely $\varphi(S[m])/t^m$. The latter space is spanned by all products $p_{y_1}\dots p_{y_m}$ where $y_1,\dots,y_m$ are points in $\Pi^\rB_O(\la)\cap\bZ^{P\bs A}$. Note that $\nu(p_{y_1}\dots p_{y_m})=y_1+\dots+y_m$. However, by Proposition~\ref{pipointdecompB} all lattice points in $\Pi^\rB_O(m\la)$ have the form $y_1+\dots+y_m$. Thus, 
\begin{equation}\label{valofcomp}
\Pi^\rB_O(m\la)\cap\bZ^{P\bs A}\subset\nu\left(\frac{H^0(F_\la,\cL^{\otimes m})}{\tau^{\otimes m}}\bs\{0\}\right).
\end{equation}
However, the left-hand side of~\eqref{valofcomp} has cardinality $\dim V_{m\la}$ while the right-hand side has cardinality no greater than  $\dim H^0(F_\la,\cL^{\otimes m})=\dim V_{m\la}$, therefore, the two sides coincide. We deduce that for any $m$: \[\conv\left\{\left.\frac{\nu(\sigma/\tau^{\otimes m})}m\right|\sigma\in H^0(F_\la,\cL^{\otimes m})\bs\{0\}\right\}=\frac{\conv(\Pi^\rB_O(m\la)\cap\bZ^{P\bs A})}m=\Pi^\rB_O(\la).\qedhere\]
\end{proof}

Let us now define a total order $\widehat>$ on $\bZ^P$ as follows. For distinct $x,x'\in\bZ^P$ consider the minimal $i$ such that $\sum_jx_{i,j}\neq\sum_jx'_{i,j}$ and set $x\widehat>x'$ if $\sum_jx_{i,j}>\sum_jx'_{i,j}$. If no such $i$ exists, set $x\widehat>x'$ if $\pi(x)>\pi(x')$. Next, define a linear map $\rho:\bR^{P\bs A}\to\bR^P$ so that for $y\in\bZ^{P\bs A}$ one has $\rho(y)_{i,i}=-\sum_jy_{i,j}$ and $\rho(y)_{i,j}=y_{i,j}$ when $j\neq i$. Of course, $\pi\rho(y)=y$ and $\rho(y)\widehat>\rho(y')$ if and only if $y>y'$. 

We next define another total order $>^\xi$ on $\bZ^P$ by setting $x>^\xi x'$ if $\xi(x)\widehat>\xi(x')$. We also define a map $\nu^\xi:\mathbb K\bs\{0\}\to\bZ^P$ by setting $\nu^\xi(h)=\xi^{-1}\rho(\nu(h))$. Simply by tracing the definitions one checks that this is a $(\bZ^P,>^\xi)$-valuation. The corresponding Newton--Okounkov body is \[\Delta^\xi=\overline{\conv\left\{\left.\frac{\nu^\xi(\sigma/\tau^{\otimes m})}m\right|m\in\bZ_{>0},\sigma\in H^0(F_\la,\cL^{\otimes m})\bs\{0\}\right\}}\subset\bR^P.\]

Since $0\in\Pi^\rB_O(\la)$, we have a unique point $x_\la\in\cQ^\rB_O(\la)\cap\bZ^P$ such that $\pi\xi(x_\la)=0$.
\begin{theorem}\label{mainNOB}
$\Delta^\xi=\cQ^\rB_O(\la)-x_\la$.    
\end{theorem}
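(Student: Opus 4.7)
The plan is to deduce this from the preceding theorem $\Delta = \Pi^\rB_O(\la)$ by tracing through how the change of valuation $\nu \mapsto \nu^\xi = \xi^{-1}\rho\circ\nu$ transforms the Newton--Okounkov body. Since $\xi^{-1}\rho:\bR^{P\bs A}\to\bR^P$ is a linear map, the sets $\{\nu^\xi(\sigma/\tau^{\otimes m})/m\}$ are the images under $\xi^{-1}\rho$ of the corresponding sets for $\nu$, and by continuity and linearity of $\xi^{-1}\rho$ this gives
\[
\Delta^\xi \;=\; \xi^{-1}\rho(\Delta) \;=\; \xi^{-1}\rho\pi\xi\bigl(\cQ^\rB_O(\la)\bigr).
\]
So the entire task reduces to understanding how $\rho\pi$ acts on the affine subspace $\xi(W_\la)\supset\xi(\cQ^\rB_O(\la))$.

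The key identity here is that $\xi$, by construction (Definition~\ref{defxi}), preserves the quantity $\sum_j(\,\cdot\,)_{i,j}$ for each $i$: if $x\in W_\la$, so that $x_{i,i}=\la(i)$, then $\sum_j \xi(x)_{i,j}=\la(i)$. Writing $z=\xi(x)$, this means $z_{i,i}=\la(i)-\sum_{j\neq i}z_{i,j}$, hence
\[
\rho\pi(z)_{i,i} \;=\; -\!\!\sum_{j\neq i}z_{i,j} \;=\; z_{i,i}-\la(i),\qquad \rho\pi(z)_{i,j}=z_{i,j}\text{ for }j\neq i.
\]
Thus on $\xi(W_\la)$ the composition $\rho\pi$ acts as translation by the vector $-\sum_i \la(i)\,\epsilon_{i,i}$.

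It remains to recognize this translation vector as $-\xi(x_\la)$. The point $x_\la$ is defined by $\pi\xi(x_\la)=0$ and $x_\la\in\cQ^\rB_O(\la)\subset W_\la$, so $\xi(x_\la)$ has all off-diagonal coordinates zero and, by the same sum identity, diagonal coordinates $\xi(x_\la)_{i,i}=\la(i)$. Therefore $\xi(x_\la)=\sum_i\la(i)\,\epsilon_{i,i}$, and combining everything:
\[
\Delta^\xi \;=\; \xi^{-1}\bigl(\xi(\cQ^\rB_O(\la))-\xi(x_\la)\bigr) \;=\; \cQ^\rB_O(\la)-x_\la.
\]
I do not anticipate any serious obstacle; the statement is essentially a bookkeeping exercise designed so that $\rho$ precisely reconstructs the diagonal coordinates lost in passing from $\cQ^\rB_O(\la)$ to $\Pi^\rB_O(\la)$, and the only ``content'' is the sum-preservation property of $\xi$ which is immediate from its definition.
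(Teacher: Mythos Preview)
Your proof is correct and follows essentially the same approach as the paper: both compute $\Delta^\xi=\xi^{-1}\rho(\Delta)=\xi^{-1}\rho(\Pi^\rB_O(\la))$ and then identify $\rho(\Pi^\rB_O(\la))$ with $\xi(\cQ^\rB_O(\la))-\xi(x_\la)$ using that $\xi$ preserves the row sums $\sum_j x_{i,j}$ and that $\xi(x_\la)=\sum_i\la(i)\epsilon_{i,i}$. Your phrasing of this identification as ``$\rho\pi$ acts on $\xi(W_\la)$ by translation'' is a slightly more explicit way of saying the same thing as the paper's equation~\eqref{liftedpi}.
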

\begin{proof}
One sees that $\xi(x_\la)_{i,i}=\la(i)$ while all other $\xi(x_\la)_{i,j}=0$. This implies
\begin{equation}\label{liftedpi}
\rho(\Pi^\rB_O(\la))=\xi(\cQ^\rB_O(\la))-\xi(x_\la)
\end{equation}
since $\rho(\Pi^\rB_O(\la))$ consists of $x$ such that $\pi(x)\in\Pi^\rB_O(\la)$ and all $\sum_jx_{i,j}=0$ while $\xi(\cQ^\rB_O(\la))$ consists of $x$ such that $\pi(x)\in\Pi^\rB_O(\la)$ and $\sum_jx_{i,j}=\la(i)$. Applying~\eqref{liftedpi} we compute \[\Delta^\xi=\xi^{-1}\rho(\Delta)=\xi^{-1}\rho(\Pi^\rB_O(\la))=\cQ^\rB_O(\la)-x_\la.\qedhere\]
\end{proof}

\begin{remark}\label{tableauxB}
We conclude this section with a brief discussion of Young tableaux. Since we do not work with Pl\"ucker coordinates on the type B flag variety, we do not obtain standard monomial theories and an analog of Corollary~\ref{standmon}. However, the above still provides a certain notion of standard tableaux which can, for instance, be used to write character formulas. Consider $\la=(a_1,\dots,a_n)$ with $a_n$ even. For a lattice point $y\in\Pi^\rB_O(\la)$ consider the decomposition given by Lemma~\ref{pipointdecompB}(a). We may encode this decomposition by a tableau $T_y$ whose $(m+1-i)$th column contains the elements $w^{O,J_i}(1),\dots,w^{O,J_i}(k)$ where $J_i\in\cJ_k$. In addition, the $i$th element in the first column is \textit{marked} if $i\in D$. We may declare tableaux of the form $T_y$ to be standard. This family consists of tableaux obtained for the type C highest weight $(a_1,\dots,a_n/2)$ according to Remark~\ref{tableaux} where, in addition, the $i$th element of the first column can be marked if it is equal to $-i$. For $O=P\bs B$ we obtain the Koike--Terada $SO(2n+1)$-tableaux of~\cite{KT}. For $O=A$ we obtain a new family of type B tableaux which are the symplectic PBW-semistandard tableaux of~\cite{Ba} with marked elements in the first  column.

We may now define the weight $\mu(T)$ of a tableau $T$ to be the sum of $\sgn(a)\varepsilon_{|a|}$ over all non-marked elements $a$ in $T$. Theorem~\ref{essentialB} then implies that the character of $V_\la$ is the sum of $e^{\mu(T)}$ over all standard tableaux $T$, similarly to the classical theory.
\end{remark}

\bibliographystyle{plainurl}
\bibliography{refs.bib}

\end{document}